\documentclass[11pt, letterpaper]{amsart}
\usepackage{amsthm}
\usepackage{amsmath,amssymb,amsfonts,nicefrac,verbatim}
\usepackage[pdfencoding=auto, psdextra]{hyperref}
\pdfstringdefDisableCommands{\def\varepsilon{\textepsilon}}

\usepackage{color}

\usepackage{bm}
\usepackage{bbm}
\usepackage[toc]{appendix}
\usepackage{enumitem}


\newtheorem{thm}{Theorem}[section]

\newtheorem{lem}[thm]{Lemma}
\newtheorem{cor}[thm]{Corollary}

\newtheorem{proposition}[thm]{Proposition}
\newtheorem{definition}[thm]{Definition}

\newtheorem{problem}[thm]{Problem}

\makeatletter
\newtheorem*{rep@theorem}{\rep@title}
\newcommand{\newreptheorem}[2]{%
\newenvironment{rep#1}[1]{%
\def\rep@title{\bf #2 \ref*{##1} \text{(Restated)} }%
\begin{rep@theorem} }%
{\end{rep@theorem} } }

\newtheorem*{rep@claim}{\rep@title}
\newcommand{\newrepclaim}[2]{%
\newenvironment{rep#1}[1]{%
\def\rep@title{\bf #2 \ref*{##1} \text{(Restated)} }%
\begin{rep@claim} }%
{\end{rep@claim} } }

\newtheorem*{rep@lemma}{\rep@title}
\newcommand{\newreplemma}[2]{%
\newenvironment{rep#1}[1]{%
\def\rep@title{\bf #2 \ref*{##1} \text{(Restated)} }%
\begin{rep@lemma} }%
{\end{rep@lemma} } }

\makeatother
\newreptheorem{theorem}{Theorem}
\newrepclaim{claim}{Claim}
\newreplemma{lemma}{Lemma}

\newcommand{\remove}[1]{}

\newcommand\eps{\varepsilon}

\renewcommand\ge{\geqslant}
\renewcommand\le{\leqslant}

\newcommand{\htt}{\operatorname{ht}}

\renewcommand{\epsilon}{\eps}

\newcommand{\NN}{\mathbb{N}}

\newcommand{\rom}[1]{\uppercase\expandafter{\romannumeral #1\relax}}

\date{}

\begin{document}
\title[Hypercontractivity and Character Bounds]{Bounds for Characters of the Symmetric Group: A Hypercontractive Approach}

\author{Noam Lifshitz}
\thanks{Affiliation of the first author: Einstein institute of Mathematics, Hebrew University. \texttt{noamlifshitz@gmail.com}. Supported by the Israel Science Foundation (grant no.~1980/22).}
\author{Avichai Marmor}
\thanks{Affiliation of the second author: Department of Mathematics, Bar-Ilan University. \texttt{avichai@elmar.co.il}. Partially supported by the Israel Science Foundation, Grant No.\ 1970/18, and by the European Research Council under the ERC starting grant agreement No.\ 757731 (LightCrypt).}

\maketitle
\begin{abstract}
   Finding upper bounds for character ratios is a fundamental problem in asymptotic group theory. Previous bounds in the symmetric group have led to remarkable applications in unexpected domains.
    The existing approaches predominantly relied on algebraic methods, whereas our approach combines analytic and algebraic tools. Specifically, we make use of a tool called `hypercontractivity for global functions' from the theory of Boolean functions. By establishing sharp upper bounds on the $L^p$-norms of characters of the symmetric group, we improve existing results on character ratios from the work of Larsen and Shalev [Larsen, M., Shalev, A. Characters of symmetric groups: sharp bounds and applications. Invent. math. 174, 645–687 (2008)]. 
   We use our norm bounds to bound Fourier coefficients of class functions, product mixing of normal sets, mixing time of normal Cayley graphs, and Kronecker coefficients. Our approach bypasses the need for the $S_n$-specific Murnaghan--Nakayama rule. Instead we leverage more flexible representation theoretic tools, such as Young's branching rule, which potentially extend the applicability of our method to groups beyond $S_n$.
\end{abstract}

\section{Introduction}

In this paper, we introduce tools from analysis of Boolean functions and apply them to the study of character ratios in symmetric groups. Since the seminal work of Diaconis and Shahshahani \cite{diaconis1981generating} from 1981, the problem of estimating the character ratios $\frac{\chi(\sigma)}{\chi(1)}$ of a finite group became one of the most prominent problems in asymptotic group theory. For instance, according to Gurevich and Howe \cite{gurevich2021harmonic}, it is the core open problem of harmonic analysis over finite groups. The works of Roichman \cite{roichman1996upper}, M\"{u}ller--Schlage-Puchta~\cite{muller2007character}, and Larsen--Shalev~\cite{larsen2008characters} each presented new bounds for character ratios, and found applications in a wide variety of unexpected areas. These areas include the theory of Fuchsian groups, Cayley graphs with random generators, Waring type problems in finite simple groups, and mixing times and covering numbers corresponding to sets closed under conjugation.

Our main result provides a new upper bound on the character ratios $\left|\frac{\chi(\sigma)}{\chi(1)}\right|$ in $S_n$ in terms of the number of cycles of $\sigma$ (see Theorem \ref{thm: few cycles} below). We prove essentially the best possible upper bounds on the character ratios in terms of a certain parameter of $\chi$ that we call \emph{level} and the number of cycles of $\sigma$. Our notion of level deviates slightly from the one introduced by Kleshchev, Larsen and Tiep~\cite{kleshchev2022level}, which we refer to as \emph{strict level}. It turns out that our level parameter is better at capturing the true nature of the character ratios compared to the dimension of the corresponding irreducible representations.

Our technique differs substantially from previous approaches which heavily depended on explicit formulas for the characters, such as the Murnaghan--Nakayama rule.
In this paper, we demonstrate the power of a tool from the theory of Boolean functions known as `hypercontractivity for global functions'~\cite{keevash2021global,khot2018pseudorandom} in this context. A variant of the hypercontractivity theorem for global functions over the symmetric group was first presented by Filmus, Kindler, Lifshitz and Minzer~\cite{filmus2020hypercontractivity}, and reached its full power in a recent work of Keevash and Lifshitz~\cite{keevash2023sharp}.
Hypercontractivity for global functions is the main tool that enables our improved bounds, and it also has an extra benefit: our method is much more versatile than the $S_n$-specific Murnaghan--Nakayama rule. (We only use the Murnaghan--Nakayama rule in verifying a few tightness examples, such as Theorem~\ref{thm: lower bound few cycles}.) In order to obtain our character bounds the only tool from the representation theory of $S_n$ that we apply is
Young's branching rule, which allows us to decompose restricted representation into irreducible constituents. While explicit formulas for the characters are rare in other groups, the so called `branching problem' is better understood.

Before delving into our results about character ratios in Section \ref{subsec:intro character ratios}, we first explore their applications to mixing times of Cayley graphs.
\subsection{A sample application for mixing times of Cayley graphs}
\label{subsec:intro sample application}

In 1984 Diaconis and Shahshahani \cite{diaconis1981generating} determined the mixing time of the lazy random walk of the normal Cayley graph $\mathrm{Cay}\left(S_n, (ij)^{S_n}\right)$ corresponding to the transpositions by computing their character ratios (a Cayley graph is called \emph{normal} if its generating set is closed under conjugation). Ever since this seminal work, upper bounds on character ratios played a key role in the analysis of the mixing times of normal Cayley graphs over either $A_n$ or $S_n$ (see, e.g., Roichman~\cite{roichman1996upper}, M\"{u}ller and Schlage-Puchta~\cite{muller2007character}, and Larsen and Shalev~\cite{larsen2008characters}).

To avoid sign issues we restrict our attention to mixing times of normal random walks on the alternating group $A_n$, but our result extends to $S_n$. 
Let $\mu$ be a probability measure on the alternating group. We associate to $\mu$ the function $f(\sigma) = \frac{n!}{2}\mu(\sigma)$ that satisfies $\|f\|_1 = 1.$ The measure $\mu$ corresponds to a (right-invariant) random walk, where a permutation $\sigma\in A_n$ walks to $\tau \sigma$ for a random $\tau \sim \mu.$ The function corresponding to $\ell$ steps of the random walk is given by the iterated self convolution $f^{* \ell},$ where the convolution is given by 
\[
f*g(\tau) = \mathbb{E}_{\sigma \sim A_n}[f(\sigma^{-1})g(\sigma\tau)]. 
\]
The $(\epsilon, L^p)$-\emph{mixing time} of the random walk is the minimal $\ell$ such that $\|f^{* \ell}-1\|_{p}<\epsilon.$ It is well known (and follows from Young's convolution inequality) that the $L^p$-mixing times are monotonically increasing with respect to $p$ and that the $L^{\infty}$-mixing time is at most twice the $L^2$-mixing time. It is therefore customary to restrict the attention to the case where $p$ is either $1$ or $2.$ We remark that in the special case where $\mu$ is the uniform measure on a set $A$, we have $f=\frac{n!1_A}{2|A|}$ and the random walk corresponds to the simple random walk on the Cayley graph $\mathrm{Cay}(A_n,A)$. For brevity, we sometimes refer to the mixing times of this random walk as the mixing time of $A$.

Since we focus on uniform measures, we can use the following notion of density:
\begin{definition}\label{def:density}
    Given a finite group $G$, the \emph{density} of a subset $A \subseteq G$ is $\mu(A) := \frac{|A|}{|G|}$.
\end{definition}

M\"{u}ller and Schlage-Puchta \cite{muller2007character} computed the mixing time of $\mathrm{Cay}(A_n, \sigma^{S_n})$ up to a constant factor in terms of the number of fixed points of $\sigma$, via character theoretic methods. This led Larsen and Shalev \cite{larsen2008characters} to investigate the more refined problem of characterizing conjugacy classes of $S_n$ with mixing times $\le \ell$.
When $\ell$ is fixed and $n$ goes to infinity, Larsen and Shalev \cite{larsen2008characters} provided a satisfying solution to this problem in terms of very complicated parameters that they introduced. Then, to simplify their result, they upper bounded the mixing time in terms of simpler functions of $\sigma$. One parameter of particular interest is the density $\mu(\sigma^{S_n})$. This related their bound to a long line of works in the theory of growth in groups, which aims to find conditions implying that the sizes of the sets $A^i:=\{a_1\cdots a_i: a_1,\ldots, a_i\in A\}$ grow rapidly with $i$ (see the survey~\cite{helfgott2015growth} for more details).

In particular, Larsen and Shalev posed the following problem:
\begin{problem}[\cite{larsen2008characters}]
    Given positive integers $\ell, n$, what is the largest possible density of an even normal (i.e., conjugacy-closed) set $A$ of $S_n$ which has $(1/e,L^1)$-mixing time $> \ell$?
\end{problem}

The restriction to conjugacy-closed sets is natural: for general subsets, large size alone does not guarantee rapid mixing. For instance, the set of permutations fixing $1$ has infinite mixing time despite having very large size.

Suppose that $n$ is sufficiently large and let $m = \lfloor n^{1-1/\ell} \rfloor$ for some integer $\ell$, and consider the conjugacy class $A$ of all permutations with $m$ fixed points and a single $(n- m)$-cycle (if the conjugacy class $A$ is odd, replace $m$ with $m+1$). M\"{u}ller and Schlage-Puchta \cite{muller2007character} showed that the mixing time of $A$ is larger than $\ell$. The density of $A$ in $A_n$ is given by: 
\[\mu(A) = \frac{2|A|}{n!} = \frac{2}{m!(n-m)} \ge n^{-Cn^{1-1/\ell}}\]
for some constant $C>0$. 
Larsen and Shalev were fairly close to showing that the above construction is essentially optimal by showing the following. 
\begin{thm}[{\cite[Theorem 6.8]{larsen2008characters}}]
For each $\epsilon>0$ there exists $n_0,$ such that the following holds. Let $\ell$ be an integer and let $n>n_0.$ Suppose that $A \subseteq A_n$ is a normal (i.e., conjugacy-closed) subset of $S_n$ of density $\ge n^{-n^{1- 1/\ell - \epsilon}}$, then the $(\epsilon, L^2)$ mixing time of $A$ is at most $\ell.$  
\end{thm}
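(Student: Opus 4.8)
I would run the standard Fourier‑analytic argument for mixing times of normal Cayley graphs, but estimate the Fourier coefficients using the character bounds (and $L^p$‑norm bounds) of the earlier sections instead of Murnaghan--Nakayama. Working over $A_n$, set $f=\frac{|A_n|}{|A|}1_A$, a normal density with $\Expect{A_n}{f}=1$, and write $\delta=|A|/|A_n|\ge n^{-n^{1-1/\ell-\epsilon}}$. Expanding $f$ in the irreducible characters of $A_n$, the Diaconis--Shahshahani upper bound lemma gives the identity
\[
\|f^{*\ell}-1\|_2^2=\sum_{\chi\neq\mathbf{1}}\chi(1)^2\bigl|\widehat f(\chi)\bigr|^{2\ell},\qquad
\widehat f(\chi)=\Expect{\sigma\sim A}{\frac{\chi(\sigma)}{\chi(1)}},
\]
so it suffices to show the right-hand side is $<\epsilon^2$ once $n>n_0(\epsilon)$. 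Since $\widehat f(\chi)$ is an average of character ratios, I have at my disposal both $|\widehat f(\chi)|\le\max_{\sigma\in A}|\chi(\sigma)/\chi(1)|$ and, after Hölder, $|\widehat f(\chi)|\le\delta^{-1/p}\,\|\chi\|_p/\chi(1)$ for every $p\ge1$. Throughout I pass freely between $A_n$ and $S_n$, since each non-trivial irreducible of $A_n$ is a constituent of an $S_n$-irreducible whose dimension and character ratios it inherits up to a bounded factor, so Theorem~\ref{thm: few cycles} and the norm bounds apply.

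The geometric heart of the matter is that a high-density normal set is essentially supported on ``generic'' permutations. First I would record that a conjugacy class of permutations with at least $c_0:=\lceil n^{1-1/\ell-\epsilon/2}\rceil$ cycles has density at most $\Prob{\sigma\sim S_n}{c(\sigma)\ge c_0}\le\bigl(e(1+\log n)/c_0\bigr)^{c_0}=n^{-(1-o(1))c_0}$ (the number of cycles of a uniform permutation is a sum of independent Bernoullis with mean $H_n$), and that a class with support of size $<n^{0.99}$ has density $\le n^{-\Omega(n)}$ (concentration of the number of fixed points); since $n^{1-1/\ell-\epsilon}=o(c_0)$, both are doubly-exponentially smaller than $\delta$. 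Hence in $\widehat f(\chi)$ one may, at the cost of an additive error $n^{-\Omega(n^{1-1/\ell-\epsilon/2})}$, restrict to $\sigma$ with $c(\sigma)\le c_0$ and support of size $\ge n^{0.99}$. For such $\sigma$, Theorem~\ref{thm: few cycles} gives $|\chi(\sigma)/\chi(1)|\le(Cc_0/n)^{t}$, up to lower-order factors, when $\chi$ has level $t$, while the sharp $L^p$-bounds give a power saving $|\chi(\sigma)/\chi(1)|\le\chi(1)^{-\Omega(1)}$ uniformly in $\chi$.

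I would then split $\sum_{\chi}$ by the level $t$. For $t$ below a threshold $t_0=t_0(\ell,\epsilon)$ I use the cycle bound: since there are $e^{O(\sqrt t)}$ characters of level $t$ and each has $\chi(1)\asymp n^{t}$, the level-$t$ block is (up to factors polynomial in $t$) at most $n^{2t}(Cc_0/n)^{2\ell t}=\bigl(C^{2\ell}(c_0^{\ell}n^{1-\ell})^{2}\bigr)^{t}$, and as $c_0^{\ell}n^{1-\ell}=n^{-\epsilon\ell/2}$ this is $\le n^{-\Omega(\epsilon\ell t)}$; summing over $t\ge1$ yields $o(\epsilon^2)$ uniformly in $\ell$. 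This is exactly where the exponent $1-1/\ell$ of the hypothesis is used, and $c_0$ matches the extremal construction whose support has size $n-n^{1-1/\ell}$. For $t>t_0$, where $\chi(1)$ is large enough that the loss $\delta^{-O(\ell)}=n^{O(\ell)\,n^{1-1/\ell-\epsilon}}$ is affordable, I use $|\widehat f(\chi)|\le\delta^{-1/2}/\chi(1)$ (Cauchy--Schwarz, $\|\chi\|_2=1$) and, in the intermediate range of levels, the sharp bound $\|\chi\|_p\le\chi(1)^{1-\gamma(p,t)}$ with $\gamma$ bounded away from $0$; then $\chi(1)^2|\widehat f(\chi)|^{2\ell}\le\chi(1)^{2-2\ell\gamma}\delta^{-2\ell/p}$, and the rapid growth of the minimal dimension of a level-$t$ representation together with standard estimates for $\sum_{\chi}\chi(1)^{-s}$ over $A_n$ make this tail negligible. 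The non-generic error term from the previous paragraph is absorbed in the same way, since even there Theorem~\ref{thm: few cycles} (or the support bound) beats the trivial estimate $1$ by a power, which more than compensates the doubly-exponentially small density of that part.

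The step I expect to be the main obstacle is carrying out this case analysis \emph{uniformly in $\ell$}: the cycle threshold $c_0$, the level threshold $t_0$, and the Hölder exponent all depend on $\ell$, and the factor $\delta^{-O(\ell)}$ must be defeated simultaneously for every $\ell$ by the dimension growth and the character decay. Concretely, the delicate point is the strength of the character/norm bound for levels in the intermediate range $n^{\Theta(1/\ell)}\lesssim t\lesssim \ell\,n^{1-1/\ell}$, where the bound via the number of cycles alone is too weak and the $L^2$-bound too crude, and one genuinely relies on the sharpness of the $L^p$-norm estimates established in the paper; the remaining ingredients---the concentration estimates for cycle and fixed-point counts, the geometric summation over levels, and the $A_n$/$S_n$ bookkeeping---are routine given the quoted results.
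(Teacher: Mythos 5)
First, note that the theorem you are proving is quoted from Larsen--Shalev and is not reproved in this paper; what the paper actually proves is the stronger Theorem~\ref{thm:Simplified mixing time}, via Theorem~\ref{thm:mixing time results} and the Fourier-coefficient bound of Theorem~\ref{thm: A_n Useful for mixing times}. Your overall architecture (Fourier expansion of $\|f^{*\ell}-1\|_2^2$, decomposition by level, H\"older against $\|\chi\|_q$, dimension lower bounds for the tail) is the same as the paper's. The substantive difference is your ``geometric heart'': you bound $\widehat f(\chi)$ by $\max_{\sigma\in A}|\chi(\sigma)/\chi(1)|$ after discarding the non-generic part of $A$ (permutations with many cycles or small support), and then invoke Theorem~\ref{thm: few cycles} pointwise. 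The paper never does this. It applies Theorem~\ref{thm:Upper bounds on Fourier coefficients} directly to the class function $f=1_A/\mu(A)$, whose only relevant statistic is $\|f\|_2/\|f\|_1=\delta^{-1/2}$; no decomposition of $A$ into conjugacy classes and no concentration estimate for cycle counts is needed. Your detour is not merely extra work --- it creates a gap. Splitting $\widehat f=\widehat f_{\mathrm{good}}+\widehat f_{\mathrm{bad}}$ leaves an additive error of size roughly $n^{-\Omega(c_0)}$ in $\widehat f(\chi)/\chi(1)$ for \emph{every} $\chi$, and the contribution $\chi(1)^2\bigl(n^{-\Omega(c_0)}\bigr)^{2\ell}$ is not small for characters of large dimension (e.g.\ $\chi(1)\approx\sqrt{n!}$ dwarfs $n^{\Omega(\ell c_0)}$ when $\ell$ is bounded). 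Your one-line claim that this error is ``absorbed in the same way'' because Theorem~\ref{thm: few cycles} ``beats the trivial estimate'' does not work there: for high-level characters the cycle bound is not the tool in play, and one must rerun the entire level-by-level analysis on the bad part --- at which point one might as well have used the class-function bound from the start.

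The second gap is the one you flag yourself: the intermediate range of levels. You write that there ``one genuinely relies on the sharpness of the $L^p$-norm estimates,'' but you do not carry out the estimate; this is exactly where the paper does its real work, in the three-regime analysis of Lemmas~\ref{lem: smallest degree}, \ref{lem:medium degee} and \ref{lem: largest degree} (packaged as Theorem~\ref{thm: Useful for mixing times}), where the H\"older exponent $q$ is tuned to $\log M/d$ and the dimension bounds of Corollary~\ref{cor: dimension of characters n over sqrt d} and Theorem~\ref{thm:EFP dimensions of representations} are played off against $\left(Cq/\log q\right)^d$. Without that computation your argument is a plan rather than a proof. Two smaller points: uniformity in $\ell$ is less delicate than you fear, since $\|f^{*(\ell+1)}-1\|_2\le\|f^{*\ell}-1\|_2$ by Young's inequality, so large $\ell$ reduces to $\ell\approx 1/\epsilon$; and once you adopt the paper's route, the factor $\delta^{-\theta}$ from log-convexity is tamed by choosing $\theta=d/\log M$ so that $M^{\theta}\le e^{d}$, rather than by a global $\delta^{-O(\ell)}$ loss that must be ``defeated'' by dimension growth.
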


Due to the $\epsilon$ in the double exponent, Larsen and Shalev's result holds only when $\ell$ is constant. We improve their result and produce an essentially optimal result when $\ell$ is up to logarithmic in $n$. Moreover, our result is applicable to all normal subsets of $A_n$ and not only to even normal subsets of $S_n$.

\begin{thm}\label{thm:Simplified mixing time}
For each $\epsilon>0$ there exists $c>0,$ such that the following holds. Let $n\in \NN$ and let $1 < \ell<\frac{c\log n}{\log\log n}$ be integers.
Suppose that $A \subseteq A_n$ is a normal subset of $A_n$ of density $\ge n^{-cn^{1- 1/\ell}}$, then the $(\epsilon, L^2)$ mixing time of $A$ is at most $\ell.$  
\end{thm}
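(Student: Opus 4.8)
The plan is to bound the $L^2$-distance $\|f^{*\ell} - 1\|_2$ via Fourier analysis on $A_n$ (equivalently, on $S_n$ after accounting for the two-to-one nature of restriction), using the character-ratio bounds from Theorem~\ref{thm: few cycles}. Writing $f = \frac{n! \cdot 1_A}{2|A|}$, Plancherel gives
\[
\|f^{*\ell} - 1\|_2^2 = \sum_{\chi \neq 1} \chi(1)^2 \left|\frac{\hat f(\chi)}{\chi(1)}\right|^{2\ell},
\]
where $\hat f(\chi) = \E_{\sigma \sim A_n}[f(\sigma)\overline{\chi(\sigma)}]$, and since $A$ is a normal set, $\frac{\hat f(\chi)}{\chi(1)}$ is an average over $A$ of the character ratios $\frac{\chi(\sigma)}{\chi(1)}$, hence bounded in absolute value by $\max_{\sigma \in A}\left|\frac{\chi(\sigma)}{\chi(1)}\right|$. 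So it suffices to show that for every nontrivial irreducible $\chi$ of $S_n$ (restricted to $A_n$), and every $\sigma \in A$,
\[
\chi(1)^2 \left|\frac{\chi(\sigma)}{\chi(1)}\right|^{2\ell} \le n^{-\Omega(n^{1-1/\ell})} \cdot (\text{small}),
\]
summing to something below $\epsilon^2$.

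First I would translate the density hypothesis into a statement about the number of cycles of a representative $\sigma \in A$. If $A$ has density $\ge n^{-cn^{1-1/\ell}}$, then $|A| \ge n!\, n^{-cn^{1-1/\ell}}/2$; comparing with the crude bound $|\sigma^{S_n}| \le \binom{n}{\le k}\cdot(\text{stuff})$ — more precisely, a conjugacy class with $c(\sigma)$ cycles has size at most roughly $n^{n - c(\sigma)}$ up to lower-order factors — forces $c(\sigma) \le C' n^{1-1/\ell}$ for an appropriate constant (and symmetrically $c(\sigma)$ cannot be too close to $n$, but the regime we care about is $c(\sigma)$ small). This is the standard counting step and should be routine. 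The key input is then Theorem~\ref{thm: few cycles}, which bounds $\left|\frac{\chi(\sigma)}{\chi(1)}\right|$ by something like $\chi(1)^{-\gamma}$ (or better, in terms of the level of $\chi$) with an exponent depending on $c(\sigma)/n$; plugging $c(\sigma) \le C'n^{1-1/\ell}$ should give a ratio bound of the form $\left|\frac{\chi(\sigma)}{\chi(1)}\right| \le \chi(1)^{-\delta_\ell}$ where $\delta_\ell$ is bounded below by something like $\frac{1}{\ell}$ up to constants, uniformly over $\sigma \in A$.

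With that in hand, the sum becomes $\sum_{\chi \neq 1}\chi(1)^{2 - 2\ell \delta_\ell}$. The hard part — and the reason the level-based refinement of Theorem~\ref{thm: few cycles} matters rather than a pure dimension bound — is making this sum converge with room to spare: one needs $2\ell\delta_\ell - 2 > 0$ bounded away from $0$, \emph{and} one needs enough of a gap that $\sum_\chi \chi(1)^{-(2\ell\delta_\ell - 2)} < \epsilon^2$, which requires controlling the number of irreducibles of each dimension (or each level). Here I would split the sum by the level $t$ of $\chi$: there are at most $n^{O(t)}$ irreducibles of level $\le t$, each of dimension $\gtrsim n^{t}$ (roughly), while the character-ratio bound on level-$t$ characters gives an extra saving exponential in $t$. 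Balancing the $n^{O(t)}$ count against the per-character decay $\chi(1)^{-(2\ell\delta_\ell-2)} \le n^{-t(2\ell\delta_\ell-2)}$ and summing the geometric series in $t$ yields the bound, provided $c$ is chosen small enough (this is where the hypothesis $\ell < c\log n/\log\log n$ enters: it guarantees $n^{1-1/\ell}$ is still polynomially large, so the exponents $\delta_\ell$ do not degrade below the threshold, and the number-of-cycles bound stays meaningful). The main obstacle is thus the bookkeeping in this final summation — ensuring the level-by-level decay strictly dominates the level-by-level multiplicity count with a uniform constant margin — which is exactly what the sharp $L^p$-norm bounds underlying Theorem~\ref{thm: few cycles} are designed to deliver; once the exponents are tracked carefully the estimate $\|f^{*\ell}-1\|_2 < \epsilon$ follows, and by the $L^1 \le L^2$ monotonicity noted in the text this also controls the $L^1$-mixing time.
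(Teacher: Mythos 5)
Your Plancherel setup matches the paper's, but the key estimating step has a genuine gap. You bound $|\hat f(\chi)/\chi(1)|$ by $\max_{\sigma\in A}|\chi(\sigma)/\chi(1)|$ and then argue that the density hypothesis forces every $\sigma\in A$ to have at most $O(n^{1-1/\ell})$ cycles. That implication is valid when $A$ is a single conjugacy class, but the theorem concerns arbitrary normal sets: a union of classes of total density $\ge n^{-cn^{1-1/\ell}}$ may contain classes of arbitrarily small density (even the identity), for which the character ratio equals $1$ and your maximum is useless. The weighting of small classes by their measure is exactly what is discarded when you replace the average by the maximum. The paper never passes to pointwise character ratios here: it bounds $|\langle f,\chi\rangle|\le\|f\|_{q'}\|\chi\|_q$ by H\"older and interpolation, using only $\|f\|_1=1$ and $\|f\|_2=\mu(A)^{-1/2}\le n^{cn^{1-1/\ell}/2}$ together with the $L^q$-norm bounds on characters (Theorem \ref{thm:Upper bounds on Fourier coefficients}, in the form of Theorem \ref{thm: A_n Useful for mixing times}); Theorem \ref{thm:Simplified mixing time} is then the special case $f_1=\cdots=f_\ell=1_A/\mu(A)$ of the class-function result Theorem \ref{thm:mixing time results}. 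Note also that the logical order is the reverse of your plan: the pointwise bound Theorem \ref{thm: few cycles} is itself deduced from the Fourier-coefficient bound applied to a single class, not the other way around.

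A second problem: even granting a single conjugacy class, your final sum is exactly borderline. With $\alpha=1-1/\ell$ the ratio bound $|\chi(\sigma)/\chi(1)|\le\chi(1)^{\alpha-1}=\chi(1)^{-1/\ell}$ yields $\chi(1)^{2}\cdot\chi(1)^{-2\ell/\ell}=\chi(1)^{0}=1$ per character, and there are $e^{\Theta(\sqrt n)}$ characters, so $\sum_\chi\chi(1)^{2-2\ell\delta_\ell}$ is not small; you need strict per-character decay. In the paper this is supplied by the refined estimate of Theorem \ref{thm: A_n Useful for mixing times}, which gives an extra factor $\max(\epsilon^d,2^{-n^{3/5}})$ for level-$d$ characters on top of $\chi(1)^{\alpha-1}$; raised to the power $2\ell$ this dominates the at most $2^d$ characters of level $d$. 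Your sketch gestures at needing ``a uniform constant margin'' but does not identify its source, and shrinking $c$ in the cycle-count hypothesis alone does not produce it.
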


Our result follows from the following more general version, which is applicable for arbitrary class functions and not only for indicators of sets.
\begin{thm}\label{thm:mixing time results}
For each $\epsilon>0$ there exist $c,C>0,$ such that the following holds. Let $f_1,\ldots f_\ell$ be nonnegative class functions over $A_n$ whose $L^1$-norm is 1. Let $\alpha_1, \ldots \alpha_\ell \in \left(0, 1-\frac{C\log \log n}{\log n}\right)$ be positive numbers with $\alpha_1 + \cdots + \alpha_\ell \le \ell-1$.
Suppose that $\|f_i\|_2 \le n^{c\alpha_in^{\alpha_i}}$ for all $i$. Then 
\[
\|f_1*\cdots * f_\ell - 1 \|_2 < \epsilon.
\]
\end{thm}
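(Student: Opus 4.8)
\emph{Proof plan.} The strategy is the classical Fourier-theoretic one of Diaconis--Shahshahani and Larsen--Shalev, but with the pointwise Murnaghan--Nakayama estimates replaced by the sharp $L^p$-norm bounds on irreducible characters established earlier in the paper. \textbf{Step 1 (spectral reduction).} I would expand each $f_i$ in the orthonormal basis of irreducible characters of $A_n$, writing $\hat f_i(\chi)=\langle f_i,\chi\rangle$. The convolution formula for class functions gives $\widehat{f_1*\cdots*f_\ell}(\chi)=\chi(1)^{-(\ell-1)}\prod_{i=1}^\ell\hat f_i(\chi)$, and since $f_i\ge 0$ with $\|f_i\|_1=1$ we have $\hat f_i(\mathbf 1)=1$, so the trivial character drops out after subtracting the constant $1$. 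Parseval then yields
\[
\|f_1*\cdots*f_\ell-1\|_2^2 \;=\; \sum_{\chi\neq\mathbf 1}\frac{\prod_{i=1}^\ell\card{\hat f_i(\chi)}^2}{\chi(1)^{2(\ell-1)}}.
\]
Characters of $A_n$ are controlled by those of $S_n$ via restriction, with the two half-characters at self-conjugate shapes requiring only a minor extra remark, so the $S_n$-estimates apply throughout.

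\textbf{Step 2 (Fourier coefficients).} Fix $i$ and a nontrivial $\chi$ of level $D$. By H\"older's inequality and the log-convexity of $L^p$-norms (using $\|f_i\|_1=1$), for every $q\ge 2$
\[
\card{\hat f_i(\chi)} \;\le\; \E\big[\,|f_i|\,|\chi|\,\big] \;\le\; \|f_i\|_{q/(q-1)}\,\|\chi\|_q \;\le\; \|f_i\|_2^{2/q}\,\|\chi\|_q .
\]
I would take $q=q_i$ proportional to $\log\|f_i\|_2\asymp c\,\alpha_i n^{\alpha_i}\log n$, which makes the prefactor $\|f_i\|_2^{2/q_i}$ a bounded constant; the hypothesis $\alpha_i<1-C\log\log n/\log n$ is used precisely here, to guarantee $q_i\le n$ so that the character norm bounds are applied within their valid range. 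Feeding in the sharp $L^p$-norm bound for a level-$D$ character---whose logarithmic gain absorbs the $\log q_i\asymp\alpha_i\log n$ created by this choice---gives, up to absolute constants, $\card{\hat f_i(\chi)}\lesssim \chi(1)\cdot\big(Cc\,n^{-(1-\alpha_i)}\big)^{D}$. When $\|f_i\|_2$ is so close to $1$ that this $q_i$ is inadmissible, I would instead use the crude bound $\card{\hat f_i(\chi)}\le\|f_i-1\|_2\lesssim\sqrt{c}\,n^{\alpha_i}$, which is weaker but still suffices once combined with the growth of $\chi(1)$ in $D$.

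\textbf{Step 3 (summation over characters).} Substituting the Step~2 bounds into the Parseval identity and grouping irreducibles by level, I would use the standard estimates $\sum_{\mathrm{level}(\chi)=D}\chi(1)^2\le (Cn)^{2D}$ and $\chi(1)\ge (cn/D)^D$ for level-$D$ characters. The level-$D$ block then contributes, up to a factor depending only on $\ell$,
\[
\lesssim\; (Cn)^{2D}\cdot\big(Cc\big)^{2\ell D}\cdot n^{-2D\sum_{i}(1-\alpha_i)} \;=\; \big(C^{1+\ell}c^{\ell}\big)^{2D}\cdot n^{\,2D\left(1-\sum_i(1-\alpha_i)\right)} .
\]
The hypothesis $\sum_i\alpha_i\le \ell-1$, i.e.\ $\sum_i(1-\alpha_i)\ge 1$, makes the exponent of $n$ nonpositive, so the level-$D$ block is at most $\big(C^{1+\ell}c^{\ell}\big)^{2D}$; summing this geometric series over $D\ge 1$ and restoring the $\ell$-dependent factor bounds the whole sum by $O\big((Cc)^{2\ell}\big)$ (using $\ell\ge 2$), which is $<\epsilon^2$ once $c$ is chosen small in terms of $\epsilon$.

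\textbf{Main obstacle.} The substantive content lies entirely in the input to Step~2: the sharp $L^p$-norm bounds on characters, which is where hypercontractivity for global functions does its work and which must be strong enough that the decay of $\|\chi/\chi(1)\|_q^2$ over level-$D$ characters decisively beats the $(Cn)^{2D}$ growth of the level-$D$ dimension sum---a Larsen--Shalev-type bound would not have the right exponent here. A secondary difficulty is the calibration of the exponents $q_i$ so that the two hypotheses on the $\alpha_i$ lock together exactly as in Step~3, and the treatment of very high-level characters, where the low-level estimates degrade and one instead invokes the classical bounds on character ratios of large-dimensional representations together with the rapid decay of $\sum_{\chi}\chi(1)^{-s}$.
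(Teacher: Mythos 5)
Your plan follows the same route as the paper: Parseval, then H\"older plus log-convexity of $L^p$-norms with $q_i\asymp\log\|f_i\|_2$ to convert the character-norm bound of Theorem \ref{thm:main} into a Fourier-coefficient bound (this is exactly Theorem \ref{thm:Upper bounds on Fourier coefficients}), then a level-by-level summation using the dimension estimates. The spectral reduction and the final summation are correct. However, there is a genuine gap in Step 2: the uniform bound $|\hat f_i(\chi)|\lesssim\chi(1)\bigl(Cc\,n^{-(1-\alpha_i)}\bigr)^{D}$ is only available when $D$ is well below $\log\|f_i\|_2$ (say $D\le\log^{0.9}\|f_i\|_2$), because the logarithmic gain you invoke is the factor $\log\!\left(\frac{\log M}{D}\right)$ in Theorem \ref{thm:Upper bounds on Fourier coefficients}, and this degenerates as $D$ approaches $\log\|f_i\|_2=c\alpha_i n^{\alpha_i}\log n$. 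In that intermediate range one only gets $\bigl(Cc\,n^{-(1-\alpha_i)}\log n\bigr)^{D}$, i.e.\ an extra $(\log n)^{D}$ per function. In the extremal case $\sum_i\alpha_i=\ell-1$ your Step 3 has \emph{zero} slack in powers of $n$, so the resulting $(\log n)^{2\ell D}$ is fatal unless it is absorbed by the superpolynomial saving $\chi(1)\le n^{D}/\sqrt{D!}$ over the crude $\sum_{\mathrm{level}=D}\chi(1)^2\le(Cn)^{2D}$; checking that $D!^{(1-\alpha_i)/2}$ beats $(\log n)^{D}$ when $D\ge\log^{0.9}\|f_i\|_2$ is precisely where the hypothesis $\alpha_i<1-\tfrac{C\log\log n}{\log n}$ is really needed (the paper isolates this as Lemma \ref{lem:medium degee}). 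Your stated reason for that hypothesis --- ensuring $q_i\le n$ --- is a misattribution: Theorem \ref{thm:main} has no upper restriction on $q$, and without the intermediate-regime argument your Step 2 claim is simply false for those levels.

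Two smaller points. First, since different $f_i$ may sit in different regimes for the same character $\chi$ (one in the low-level regime, another in the high-level Cauchy--Schwarz regime you defer to the ``main obstacle'' paragraph), you need a single per-$i$ bound valid for \emph{all} levels before taking the product; the paper packages this as $\frac{|\hat f_i(\chi)|}{\chi(1)}\le\max(\epsilon^{D},2^{-n^{3/5}})\,\chi(1)^{\alpha_i-1}$ (Theorem \ref{thm: A_n Useful for mixing times}), normalizing by $\chi(1)^{\alpha_i-1}$ rather than $n^{D(\alpha_i-1)}$ so that the exponents telescope via $\sum_i(\alpha_i-1)+1\le0$ regardless of regime. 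Second, the hypothesis only gives $\alpha_i>0$, and for tiny $\alpha_i$ your fallback $|\hat f_i(\chi)|\le\|f_i-1\|_2\lesssim\sqrt{c}\,n^{\alpha_i}$ does not reproduce the $\bigl(Cc\,n^{\alpha_i-1}\bigr)^{D}$ shape for $D\ge2$; the paper avoids this by first increasing the $\alpha_i$ (which weakens both hypotheses monotonically) until each is at least of order $\tfrac{\log\log n}{\log n}$. Both are repairable, but the intermediate-regime absorption is a substantive missing step, not a routine one.
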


We also deduce the following interesting dichotomy. For each normal random walk on $A_n$ that starts at the identity we either have an unproportional probability of returning to the same place after two steps or we are mixed. Since class functions on $A_n$ may have $f(\sigma) \ne f(\sigma^{-1})$, we must assume that $f$ is \emph{symmetric}, i.e., that $f(\sigma) = f(\sigma^{-1}).$

\begin{cor}\label{cor:two steps mixing}
    For each $\epsilon>0$ there exists $c>0$, such that the following holds. Let $f\colon A_n \to \left[0,\infty\right)$ be a symmetric class function having $L^1$-norm 1. Suppose 
    $f*f(1)<n^{c\sqrt{n}}.$
    Then $\|f*f-1\|_2<\epsilon$.
\end{cor}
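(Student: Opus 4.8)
The plan is to deduce this directly from Theorem \ref{thm:mixing time results} with $\ell = 2$ and $f_1 = f_2 = f$. The one substantive ingredient is the elementary identity relating $f * f(1)$ to the $L^2$-norm of $f$: unwinding the definition of convolution, $f * f(1) = \Expect{\sigma \sim A_n}{f(\sigma^{-1}) f(\sigma)}$, and since $f$ is symmetric this equals $\Expect{\sigma \sim A_n}{f(\sigma)^2} = \|f\|_2^2$. Hence the hypothesis $f * f(1) < n^{c \sqrt n}$ is precisely $\|f\|_2 < n^{c\sqrt n / 2}$.

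Given $\epsilon > 0$, let $c_0, C_0 > 0$ be the constants supplied by Theorem \ref{thm:mixing time results}, and apply that theorem with $\ell = 2$, $f_1 = f_2 = f$ --- which is a nonnegative class function of $L^1$-norm $1$ as required, symmetry not being needed for this --- and $\alpha_1 = \alpha_2 = 1/2$. The constraint $\alpha_1 + \alpha_2 \le \ell - 1$ reads $1 \le 1$; the constraint $\alpha_i \in \left(0, 1 - \tfrac{C_0 \log\log n}{\log n}\right)$ holds once $n$ exceeds a threshold $n_0 = n_0(C_0)$, since $1 - \tfrac{C_0\log\log n}{\log n} \to 1$; and the norm hypothesis $\|f_i\|_2 \le n^{c_0 \alpha_i n^{\alpha_i}} = n^{c_0 \sqrt n / 2}$ follows from the identity of the previous paragraph provided we choose $c \le c_0$. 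The conclusion of Theorem \ref{thm:mixing time results} is then exactly $\|f * f - 1\|_2 < \epsilon$.

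For the finitely many $n < n_0$ one notes that a sufficiently small $c$ forces $\|f\|_2$ close to $1$, which already makes $\|f*f-1\|_2$ small by a crude estimate: writing $g = f - 1$ we have $\Expect{}{g} = 0$, so $1 * g = g * 1 = 0$ and therefore $f * f - 1 = g * g$, whence Young's convolution inequality gives $\|f*f - 1\|_2 = \|g * g\|_2 \le \|g\|_1 \|g\|_2 \le 2\sqrt{\|f\|_2^2 - 1} \le 2\sqrt{n^{c\sqrt n} - 1}$, which is below $\epsilon$ once $c$ is small enough in terms of $n$. Taking $c$ to be the minimum of $c_0$ and of the finitely many constants arising from $n < n_0$ completes the argument. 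There is no genuine obstacle here: all the difficulty resides in Theorem \ref{thm:mixing time results}, and this corollary is simply its $\ell = 2$, $\alpha_1 = \alpha_2 = 1/2$ specialization combined with the observation $f * f(1) = \|f\|_2^2$; the only mild care needed is the bookkeeping for small $n$ and checking that the admissible range for $\alpha_i$ indeed contains $1/2$ for large $n$.
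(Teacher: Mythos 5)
Your proposal is correct and follows the paper's own route exactly: identify $f*f(1)=\|f\|_2^2$ via symmetry and then invoke Theorem \ref{thm:mixing time results} with $\ell=2$ and $\alpha_1=\alpha_2=1/2$ (the paper's one-line proof even misstates the identity as $f*f(1)=\|f\|_2$; your $\|f\|_2^2$ is the right computation, and it matches the exponent $n^{c\sqrt n/2}$ needed for the theorem's hypothesis). Your extra bookkeeping for small $n$ is a harmless and reasonable patch that the paper leaves implicit.
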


In Section \ref{sec:other applications} we present other applications of our techniques to the study of diameters, products of normal sets, and Kronecker coefficients.

\remove{
}

\subsection{Upper bounds on character ratios}
\label{subsec:intro character ratios}
In their seminal result, Larsen and Shalev \cite{larsen2008characters} established a general, complicated bound on the character ratios $\frac{\chi(\sigma)}{\chi(1)}$. Then, they derived the following bound in terms of the number of cycles of $\sigma$. 

\begin{thm}[{\cite[Theorem 1.4]{larsen2008characters}}, restated]
For all $\epsilon>0$ there exists $n_0$, such that if $n>n_0,$ $\alpha\in (0,1-\epsilon)$ and a permutation $\sigma$ has at most   
$n^{\alpha}$ cycles, then 
\[ \left|\frac{\chi(\sigma)}{\chi(1)} \right|\le \chi(1)^{\alpha -1+\epsilon}\] 
for all irreducible characters $\chi.$
\end{thm}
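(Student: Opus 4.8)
The plan is to deduce this result (the Larsen–Shalev Theorem 1.4) from the paper's main character‑ratio bound in terms of level and number of cycles (Theorem~\ref{thm: few cycles}), together with a translation between the \emph{level} of an irreducible character and its dimension $\chi(1)$. Since Theorem~\ref{thm: few cycles} is promised to give ``essentially the best possible'' bounds of the form $|\chi(\sigma)/\chi(1)| \le \chi(1)^{-\gamma(\text{level},\#\text{cycles})}$ or directly a bound in terms of the level, the entire content of this corollary should be: a character of level $d$ (or strict level) satisfies $\chi(1) \ge n^{(1-o(1))d}$, so controlling the exponent in terms of the level immediately yields the stated exponent $\alpha - 1 + \epsilon$ in terms of $\chi(1)$.

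First I would recall the dimension bound: for an irreducible character $\chi_\lambda$ of $S_n$ whose Young diagram $\lambda$ has first row and first column of size $n - d$ (so $d$ is essentially the level), one has $\chi_\lambda(1) \ge \binom{n-d}{d} \ge (n/d)^d$, and more crudely $\chi_\lambda(1) \ge n^{(1-o(1))d}$ as long as $d = o(n)$; conversely $\chi_\lambda(1) \le n^{O(d)}$ type bounds hold, but only the lower bound on $\chi(1)$ in terms of $d$ is needed here. Then, from Theorem~\ref{thm: few cycles}, a permutation $\sigma$ with at most $n^\alpha$ cycles satisfies roughly $|\chi_\lambda(\sigma)/\chi_\lambda(1)| \le n^{-(1-\alpha)d + o(d)}$ (this is the shape the paper advertises: level $d$ and $\le n^\alpha$ cycles give a bound that decays like the $(1-\alpha)$‑th power of a dimension‑type quantity). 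Combining, $|\chi_\lambda(\sigma)/\chi_\lambda(1)| \le n^{-(1-\alpha)d + o(d)} \le \chi_\lambda(1)^{-(1-\alpha) + o(1)}$, using $\chi_\lambda(1) \le n^{Cd}$ or more simply $n^{d} \le \chi_\lambda(1)^{1+o(1)}$ — and here the $o(1)$ must be made uniform, which is where $\alpha \le 1 - \epsilon$ and $n \ge n_0(\epsilon)$ enter.

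The argument splits naturally into two regimes. When the level $d$ is large (say $d \ge \epsilon' \log n$ for a suitable $\epsilon'$), the dimension $\chi(1)$ is at least $n^{\Omega(\log n)}$, the $o(d)$ error terms are genuinely lower order compared to $(1-\alpha)d \ge \epsilon d$, and the clean exponent $\alpha - 1 + \epsilon$ falls out. When $d$ is bounded (or small), one must be more careful: here $\chi(1)$ may be only polynomially large in $n$, so an additive error of, say, a constant in the exponent of $n$ is \emph{not} negligible relative to $(1-\alpha)d$. In this small‑level regime I would appeal to more precise low‑level character estimates — for level $d$ fixed, the character ratio is governed by a polynomial in the cycle‑counting statistics of $\sigma$ of degree $d$ (the ``$d$‑th moment'' heuristic), giving $|\chi(\sigma)/\chi(1)| \lesssim (\#\text{cycles}/n)^{d}\cdot(1+o(1)) \le n^{-(1-\alpha)d}(1+o(1))$, and then compare with $\chi(1) \asymp n^{d}$ up to $d$‑dependent constants, which for fixed $d$ are absorbed into $\chi(1)^{\epsilon}$ once $n$ is large.

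\textbf{The main obstacle} will be the uniformity of the error terms across \emph{all} irreducible characters simultaneously, i.e.\ making the transition from ``level $d$'' bounds to ``$\chi(1)$'' bounds with an error that is $o(1)$ \emph{in the exponent of $\chi(1)$}, uniformly in $\lambda$. Concretely: Theorem~\ref{thm: few cycles} naturally produces an error like $n^{o(d)}$ or $d^{O(d)}$, and one needs $d^{O(d)} \le \chi_\lambda(1)^{\epsilon/2}$; since $\chi_\lambda(1) \ge (n/d)^d$, this requires $d^{O(d)} \le (n/d)^{\epsilon d / 2}$, i.e.\ $d^{O(1)} \le (n/d)^{\epsilon/2}$, which holds precisely when $d \le n^{1-\epsilon/2 - o(1)}$ — and $\sigma$ having at most $n^\alpha \le n^{1-\epsilon}$ cycles forces exactly the relevant constraint on which levels contribute nontrivially (for larger $d$ the trivial bound $|\chi(\sigma)/\chi(1)|\le 1$ must be checked to already beat $\chi(1)^{\alpha-1+\epsilon}$, which fails, so in fact one needs that for $d$ beyond the threshold the character ratio is bounded by something like $2^{-d}$ coming from the hypercontractive input — this case analysis, reconciling the ``few cycles'' regime with the full range of $d$, is the delicate part). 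Once the level‑to‑dimension dictionary and these regime thresholds are pinned down, the statement follows by direct substitution.
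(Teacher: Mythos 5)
This statement is the Larsen--Shalev bound that the paper quotes as background; the paper does not reprove it but rather supersedes it with Theorem~\ref{thm: few cycles}, and the intended derivation of the quoted statement from the paper's results is a two-line reduction that your proposal never exploits: given $\sigma$ with at most $n^{\alpha}$ cycles and $\alpha<1-\epsilon$, apply Theorem~\ref{thm: few cycles} with $\alpha'=\alpha+\epsilon/2$. For $n>n_0(\epsilon)$ one has $n^{\alpha}\le c\alpha' n^{\alpha'}$ (since $\alpha'\ge\epsilon/2$ and $n^{\epsilon/2}\ge 2/(c\epsilon)$) and $\alpha'\in(\tfrac{1}{c\log n},1-\tfrac{C\log\log n}{\log n})$, so $|\chi(\sigma)/\chi(1)|\le\chi(1)^{\alpha'-1}\le\chi(1)^{\alpha-1+\epsilon}$. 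The $\epsilon$ of slack in the exponent is exactly what makes this perturbation free, and no level analysis is needed at this stage.

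What you propose instead is essentially a re-derivation of Theorem~\ref{thm: few cycles} itself from the level-dependent bounds, which is the content of the paper's Section~\ref{sec:Fourier} (Theorem~\ref{thm: Useful for mixing times} assembled from Lemmas~\ref{lem: largest degree}, \ref{lem: smallest degree} and \ref{lem:medium degee}). Your skeleton matches theirs: convert $n^{d(\alpha-1)}$ to $\chi(1)^{\alpha-1}$ using $\chi(1)\le n^{d}$ (Corollary~\ref{cor: dimension of characters n over sqrt d} --- note that, contrary to your remark that only the lower bound on $\chi(1)$ is needed, this \emph{upper} bound is essential because $\alpha-1<0$), absorb the multiplicative errors into $\chi(1)^{\epsilon}$ via $\chi(1)\ge(n/ed)^{d}$, and switch to the Cauchy--Schwarz bound $|\hat f(\chi)|\le\|f\|_2$ against the dimension lower bound once $d$ exceeds the threshold where the Fourier-coefficient bound degenerates. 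Two points need repair if you pursue this route. First, you attribute the bound $n^{-(1-\alpha)d+o(d)}$ to Theorem~\ref{thm: few cycles}; that theorem's conclusion is already in terms of $\chi(1)^{\alpha-1}$ with no level appearing, and the level-dependent estimate you want is Theorem~\ref{thm: actual upper bound on character ratio} (equivalently Theorem~\ref{thm:Upper bounds on Fourier coefficients} applied to $f=1_{\sigma^{S_n}}/\mu(\sigma^{S_n})$). Second, your treatment of the small-level regime rests on an unproven ``$d$-th moment heuristic'' $|\chi(\sigma)/\chi(1)|\lesssim(\#\mathrm{cycles}/n)^{d}$; as written this is a gap, though it is precisely what Lemma~\ref{lem: smallest degree} establishes rigorously, so it can be filled by citation rather than by new argument.
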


The result of Larsen and Shalev is sharp up to the $\epsilon$ in the exponent. However, this $\epsilon$ in the exponent prevents their result from providing meaningful estimates when $\sigma$ has more than $n^{1-o(1)}$ cycles. We get rid of the $\epsilon$, thereby improving the bound and getting effective results for permutations having up to $\frac{n}{\log^{O(1)} n}$ cycles. 

\begin{thm}\label{thm: few cycles}
There exist absolute constants $c,C>0$ such that the following holds. Let $\alpha \in(\frac{1}{c\log n},1-\frac{C\log\log n}{\log n})$. Suppose that $\sigma\in S_n$ has at most $c\alpha n^{\alpha}$ cycles. Then 
\[
\left|\frac{\chi(\sigma)}{\chi(1)}\right|\le \chi(1)^{\alpha -1}
\]
for all irreducible characters $\chi$.
\end{thm}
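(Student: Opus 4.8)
The plan is to bound the character ratio by controlling the $L^p$-norms of the character $\chi$ as a class function, and then applying an interpolation/duality argument that converts a norm bound into a pointwise bound at permutations with few cycles. First I would introduce the \emph{level} parameter of $\chi$ (denoted, say, $\mathrm{level}(\chi)$): for a partition $\lambda \vdash n$ labelling $\chi = \chi^\lambda$, this should be roughly $n$ minus the largest part of $\lambda$ (symmetrized so the level is unchanged under $\lambda \mapsto \lambda'$), so that $\chi(1) = \dim \chi^\lambda$ is polynomially controlled by the level, $n^{c\cdot\mathrm{level}}\le \chi(1)$ on the relevant range. The key analytic input, which I expect to be proved earlier in the paper via hypercontractivity for global functions over $S_n$ (in the spirit of Keevash--Lifshitz and Filmus--Kindler--Lifshitz--Minzer), is a sharp bound of the shape
\[
\left\| \frac{\chi}{\chi(1)} \right\|_{p} \le \chi(1)^{-\gamma(p)}
\]
for a suitable exponent $\gamma(p)$ growing with $p$, where the $L^p$-norm is taken with respect to the uniform measure on $S_n$. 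This is the place where Young's branching rule enters: restricting $\chi^\lambda$ to smaller symmetric groups and tracking how the level behaves under restriction lets one set up the induction that feeds the hypercontractive estimate, entirely bypassing Murnaghan--Nakayama.

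Next I would pass from the norm bound to the pointwise bound by a counting argument on conjugacy classes. A permutation $\sigma$ with $k$ cycles lies in a conjugacy class of size roughly $n!/\prod(\text{cycle-type multiplicities})$, and more importantly the number of conjugacy classes with at most $k$ cycles is small — crudely at most $p(n)$ but, restricted to the few-cycles regime $k \le c\alpha n^\alpha$, it is bounded by something like $n^{O(k)} = n^{O(\alpha n^\alpha)}$, which is $\chi(1)^{o(1)}$ when $\chi$ has large enough level, and is controlled otherwise by a direct small-level argument. Combining this with Markov's inequality applied to $|\chi/\chi(1)|^p$ under uniform measure, or rather its class-function refinement where one weights by class sizes, gives that for \emph{every} such $\sigma$,
\[
\left| \frac{\chi(\sigma)}{\chi(1)} \right| \le \left( \frac{\text{(number of classes with few cycles)}}{\text{(min class size)}} \right)^{1/p} \left\| \frac{\chi}{\chi(1)} \right\|_p,
\]
and then one optimizes $p$ as a function of $\alpha$ and the level. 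The hypotheses $\alpha > 1/(c\log n)$ and $\alpha < 1 - C\log\log n/\log n$ are exactly what make this optimization go through: the lower bound on $\alpha$ guarantees there is room to take $p$ large enough to beat the number-of-classes factor, and the upper bound keeps $\gamma(p)$ safely below $1-\alpha$ so no $\epsilon$ is lost in the exponent.

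The main obstacle, as I see it, is the small-level (equivalently, low-dimensional) regime: when $\chi(1)$ is only, say, $n^{O(1)}$, the norm bound $\chi(1)^{-\gamma(p)}$ is too weak to absorb the $n^{O(\alpha n^\alpha)}$ counting loss, so a separate, more hands-on argument is needed there — presumably one shows directly that for characters of bounded level the ratio $\chi(\sigma)/\chi(1)$ is governed by a polynomial in the ``few coordinates'' that define $\chi$, and that on a permutation with at most $c\alpha n^\alpha$ cycles this polynomial is automatically at most $\chi(1)^{\alpha-1}$ because $\alpha - 1$ is bounded away from $0$. Tying the two regimes together at the crossover level — and verifying that the constants $c,C$ can be chosen uniformly — is the delicate bookkeeping step; everything else is interpolation plus branching-rule induction.
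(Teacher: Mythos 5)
Your skeleton is the paper's: prove an $L^q$ bound $\|\chi\|_q\le (Cq/\log q)^d(d^d\chi(1)/n^d)^{1-2/q}$ for level-$d$ characters via globalness and hypercontractivity (Theorem \ref{thm:main}), then convert it to a pointwise bound at $\sigma$ by pairing $\chi$ against the normalized class indicator $f=1_{\sigma^{S_n}}/\mu(\sigma^{S_n})$ with H\"older, using that few cycles forces $\mu(\sigma^{S_n})\ge n^{-c'\alpha n^{\alpha}}$, and optimizing $q$ (Theorem \ref{thm:Upper bounds on Fourier coefficients} and its corollaries). Two remarks. First, your ``number of classes with few cycles'' factor is superfluous: since $\chi$ is constant on the single class of $\sigma$, one only needs $|\chi(\sigma)|\le \mu(\sigma^{S_n})^{-1/q}\|\chi\|_q$, i.e.\ only the size of that one class enters; your extra factor is of the same order $n^{O(\alpha n^{\alpha})}$ and merely perturbs constants, so it is harmless but not needed.

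Second, and more substantively, your identification of ``the main obstacle'' is inverted, and as described it would leave the actual gap unfilled. The low-level regime is not the problem: for small $d$ the optimal exponent is $q\approx 2\log M/d$ with $M=\mu(\sigma^{S_n})^{-1/2}$, which is large, the factor $M^{2/q}$ is $e^{O(d)}$, and the hypercontractive bound delivers $(\epsilon n^{\alpha-1})^d\le \epsilon^d\chi(1)^{\alpha-1}$ directly (this is Lemma \ref{lem: smallest degree}); no separate ``hands-on polynomial'' argument is needed there, and the claim that such a polynomial is ``automatically'' at most $\chi(1)^{\alpha-1}$ on few-cycle permutations is unjustified for general bounded level. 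The regime that genuinely requires a different argument is \emph{large} level, $d\gtrsim\log M$, where the H\"older optimization would demand $q<2$ and the $L^q$ bound gives nothing. There one must fall back on Cauchy--Schwarz, $|\chi(\sigma)|\le \mu(\sigma^{S_n})^{-1/2}$, and beat it with a dimension lower bound of the form $\chi(1)\ge (n/(ed))^d$ for level-$d$ characters (Theorem \ref{thm:EFP dimensions of representations}; this is Lemma \ref{lem: largest degree}). You do posit a lower bound $\chi(1)\ge n^{c\cdot\mathrm{level}}$ at the outset, which is the right ingredient, but your write-up never deploys it where it is needed, and there is also an intermediate band $\log^{0.9}M<d\le\log M$ requiring its own bookkeeping (Lemma \ref{lem:medium degee}). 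So: right method, but the case analysis as proposed would not close.
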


Clearly, this bound is not universally tight for all permutations, evident from the inherent variability in character values among permutations sharing the same cycle count. Nevertheless, for numerous combinations of $\chi$ and the cycle count of $\sigma$, it turns out that Theorem~\ref{thm: few cycles} represents approximately the best achievable bound based solely on the number of cycles:
\begin{thm}\label{thm: lower bound few cycles}
    For all $0 < \epsilon < 1$ there exist $n_0 \in \NN$ and $c > 0$ such that the following holds for all $n > n_0$. Let $\alpha\in(0,1-\frac{\log\log n}{\log n})$ and denote $\ell=\lceil\alpha n^{\alpha}\rceil$. Then there exists a permutation $\sigma \in S_n$ with $\ell$ cycles, such that
\[
\left|\frac{\chi(\sigma)}{\chi(1)}\right|\ge\chi(1)^{\alpha-\epsilon-1}.
\]
for every $d < \min(\frac{\ell}{200},cn^\epsilon)$ and every irreducible character $\chi = \chi_\lambda$ with $\lambda_1=n-d$.
\end{thm}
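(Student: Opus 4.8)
The plan is to take for $\sigma$ the permutation whose cycle type is $(n-\ell+1,1^{\ell-1})$, i.e.\ a single $(n-\ell+1)$-cycle together with $\ell-1$ fixed points. This has exactly $\ell$ cycles, and for $n$ large it is a genuine cycle type since $\ell\le n-1$; most importantly, $\sigma$ depends neither on $d$ nor on $\chi$, as the statement demands. Fix an irreducible character $\chi=\chi_\lambda$ with $\lambda_1=n-d$ and write $\lambda=(n-d,\bar\lambda)$ with $\bar\lambda\vdash d$. Applying the Murnaghan--Nakayama rule by peeling off the single large cycle of $\sigma$ gives
\[
\chi_\lambda(\sigma)=\sum_{B}(-1)^{\operatorname{ht}(B)}\,\dim_{S_{\ell-1}}(\lambda\setminus B),
\]
the sum over border strips $B\subseteq\lambda$ of size $n-\ell+1$. (This is the only place the algebra of $S_n$ enters, and, in contrast to the main theorems, only through the Murnaghan--Nakayama rule, which is harmless here because it is applied to a single cycle.) The problem thus reduces to evaluating this sum and comparing the resulting dimension with $\dim_{S_n}\lambda$.

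The key step is the combinatorial claim that, under the hypothesis $d<\ell/200$ (hence $\ell-1>2d$), \emph{the only} border strip of size $n-\ell+1$ in $\lambda$ is the horizontal strip consisting of the last $n-\ell+1$ cells of the first row. I would prove this directly: writing $\xi:=\lambda\setminus B\vdash\ell-1$, we have $\xi_1\le\ell-1$, so $B$ has at least $(n-d)-(\ell-1)$ cells in row $1$ and hence at most $d$ cells below it; but every cell of $\lambda$ in rows $\ge2$ lies in a column $\le\bar\lambda_1\le d$, so if $B$ met a row $\ge2$ then connectedness would force $\xi_1\le\bar\lambda_1-1<d$, whereas counting the row-$1$ cells of $B$ forces $\xi_1\ge\ell-d$; together these contradict $\ell-1>2d$. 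Therefore $B$ is the horizontal strip, $\operatorname{ht}(B)=0$, and $\xi=(\ell-1-d,\bar\lambda)$ (a genuine partition, as $\ell-1-d\ge d\ge\bar\lambda_1$). Consequently $\chi_\lambda(\sigma)=\dim_{S_{\ell-1}}(\ell-1-d,\bar\lambda)>0$ and
\[
\left|\frac{\chi_\lambda(\sigma)}{\chi_\lambda(1)}\right|=\frac{\dim_{S_{\ell-1}}(\ell-1-d,\bar\lambda)}{\dim_{S_n}(n-d,\bar\lambda)}.
\]

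It remains to compare the two dimensions. The hook length formula gives, for any $N$, with $b:=\bar\lambda_1$, $e:=d+b$ and $H_{\bar\lambda}$ the product of the hook lengths of $\bar\lambda$,
\[
\dim_{S_N}(N-d,\bar\lambda)=\frac{N(N-1)\cdots(N-e+1)}{H_{\bar\lambda}\prod_{j=1}^{b}\bigl(N+1-j-d+\bar\lambda'_j\bigr)},
\]
from which crude bounds (using $H_{\bar\lambda}\ge1$ in one direction, and $H_{\bar\lambda}\le d!\le d^d$ together with $\ell-2d\ge\ell/2$ in the other) yield
\[
\log\dim_{S_n}(n-d,\bar\lambda)\le d\log(2n),\qquad
\log\dim_{S_{\ell-1}}(\ell-1-d,\bar\lambda)\ge d\log\tfrac{\ell}{4d}.
\]
I then split into cases. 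If $\alpha\le\epsilon$ there is nothing to do: $\chi_\lambda(1)^{\alpha-\epsilon}\le1\le\dim_{S_{\ell-1}}(\ell-1-d,\bar\lambda)=|\chi_\lambda(\sigma)|$, so $|\chi_\lambda(\sigma)/\chi_\lambda(1)|\ge\chi_\lambda(1)^{\alpha-\epsilon-1}$. If $\alpha>\epsilon$, choose the constant $c\le\epsilon/8$ so that $\alpha>8c$; then $d<cn^\epsilon$ and $\ell\ge\alpha n^\alpha$ give $\frac{\ell}{4d}>\frac{\alpha}{4c}n^{\alpha-\epsilon}$, hence $\log\frac{\ell}{4d}\ge\log2+(\alpha-\epsilon)\log n\ge(\alpha-\epsilon)\log(2n)$, and combining with the displayed dimension bounds,
\[
\log|\chi_\lambda(\sigma)|\ge d\log\tfrac{\ell}{4d}\ge(\alpha-\epsilon)\,d\log(2n)\ge(\alpha-\epsilon)\log\chi_\lambda(1),
\]
which is exactly $|\chi_\lambda(\sigma)/\chi_\lambda(1)|\ge\chi_\lambda(1)^{\alpha-\epsilon-1}$.

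The main obstacle is the combinatorial claim that there is only one border strip of size $n-\ell+1$; once this is in hand the character value is pinned down exactly as a dimension in $S_{\ell-1}$, and the rest is bookkeeping with the hook length formula. A secondary subtlety is making the dimension comparison uniform over the whole admissible range of $\alpha$ — in particular for $\alpha$ close to $\epsilon$ and for very small $\alpha$ (where $d<\ell/200$ forces $d$ itself to be small) — and the observation that the regime $\alpha\le\epsilon$ is trivial is what removes the apparent difficulty there.
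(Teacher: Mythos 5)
Your proposal is correct and follows essentially the same route as the paper: the same permutation $(n-\ell+1,1^{\ell-1})$, the same reduction via Murnaghan--Nakayama to the identity $\chi_\lambda(\sigma)=\dim_{S_{\ell-1}}(\ell-1-d,\bar\lambda)$ (the paper isolates this as a standalone lemma on removing long cycles), and the same trivial disposal of the case $\alpha\le\epsilon$. The only cosmetic difference is that you derive the two dimension estimates directly from the hook length formula, whereas the paper invokes its stock bounds $\chi(1)\le n^d/\sqrt{d!}$ and $\chi_\mu(1)\ge(\ell'/(ed))^d$; both versions suffice for the final comparison.
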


The notion of level of a character will play an important role in our paper. Let $\lambda=(\lambda_1, \lambda_2, \cdots, \lambda_\ell) \vdash n$ be a partition of $n$. We make use of the convention that $\lambda_1\ge \ldots \ge \lambda_{\ell}$ and we denote by $\lambda'$ the partition conjugate to $\lambda$ obtained by flipping the corresponding Young diagram. The \emph{level} of a character $\chi_{\lambda}$ of $S_n$ is the minimum between $n-\lambda_1$ and $n-\lambda_1'$.

We derive Theorem \ref{thm: few cycles} from Theorem \ref{thm: actual upper bound on character ratio} below, which shows that the character ratios of level $d$ representations decay essentially like $n^{d(\alpha - 1)}.$ This fits a philosophy of Gurevich and Howe \cite{gurevich2021harmonic}, which states that the main property of a character that governs the order of magnitude of its ratios is its level. Theorem \ref{thm: actual upper bound on character ratio} also demonstrates the tight connection between the theory of Boolean functions and the character theory of the symmetric group. Arguably, the theory of the Boolean cube is centered around the so called \emph{noise operator}. One of the key properties of the noise operator is the fact that its eigenvalues decay exponentially with respect to a certain parameter called degree, which is analogous to the notion of level for characters. The analogy between these notions was first observed by Ellis, Friedgut, and Pilpel~\cite{ellis2011intersecting} and many works in the theory of Boolean function are devoted to studying the analogy between Boolean functions and the harmonic analysis of the symmetric group (see e.g. \cite{dafni2021complexity, ellis2015stability}).
Given a permutation $\sigma \in S_n$, an analog to the noise operator on the Boolean cube is the operator $T_\sigma$ corresponding to the simple random walk on the Cayley graph $\mathrm{Cay}(A_n,A)$, defined by 
\[
f(\tau)\mapsto \mathbb{E}_{\pi \sim \sigma^{S_n}}[f(\pi \tau)]. 
\]
The operator $T_{\sigma}$ has the irreducible characters of $S_n$ as eigenvectors, where each character $\chi$ corresponds to the eigenvalue $\frac{\chi \left(\sigma\right)}{\chi(1)}.$ In this perspective, the following theorem shows that for many permutations $\sigma$, $T_{\sigma}$ exhibits exponential decay properties similar to the noise operator. 
\remove{
then $\frac{\chi(\sigma)}{\chi(1)} \le n^{\alpha -1}.$ 
For a set $A\subseteq S_n$ we call $\frac{A}{n!}$ the \emph{density} of $A$. By the centralizer theorem, the density of $^{S_n}\sigma$ is given by the size of the centralizer of $\sigma$.

It turns out that we have the following 

We prove the following bound  
\begin{thm}
For each $\zeta>0$ there exists $c>0,$ such that the following holds. Let $\alpha \in(0,1-\zeta)$ and let $\sigma\in S_n$ have a centralizer of size $\le e^{c n^{\alpha}\log n} $ with $\alpha \in(0, 1-\zeta).$ 
Then for every character $\chi$ of $S_n$ of level $d<n^{1-2\zeta}$ we have 
\[\left|\frac{\chi(\sigma)}{\chi(1)}\right| \le n^{d(\alpha -1)}.\]
\end{thm}
}
\begin{thm}\label{thm: actual upper bound on character ratio}
\remove{There exists an absolute constant $c>0,$ such that the following holds. Let $\alpha \in(0,1)$ and let 
$\sigma\in S_n$ be a permutation whose conjugacy class has density $\ge n^{-2c \alpha n^{\alpha}}$.
Then for every character $\chi$ of $S_n$ of level $d \le c \alpha n^{\alpha}\log n$ we have 
\[\left|\frac{\chi(\sigma)}{\chi(1)}\right| \le n^{d(\alpha-1)}\left(\frac{\alpha \log n}{\log(2c n^{\alpha}\log n) -\log d}\right)^d.\]}
There exists an absolute constant $C>0,$ such that the following holds for all $\alpha \in(0,1)$. Let 
$\sigma\in S_n$ be a permutation whose conjugacy class has density $\ge n^{-2 \alpha n^{\alpha}}$ (see Definition~\ref{def:density}).
Then for every character $\chi$ of $S_n$ of level $d \le \alpha n^{\alpha}\log n$ we have 
\[
\left| \frac{\chi(\sigma)}{\chi(1)}\right|\le  n^{d(\alpha -1)}\left(\frac{C \alpha \log n}{\log (\alpha n^{\alpha} \log n)-\log d}\right)^{d}.
\]
\end{thm}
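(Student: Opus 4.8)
By replacing $\lambda$ with its conjugate if necessary -- which affects neither $\chi_\lambda(1)$ nor $|\chi_\lambda(\sigma)|$, since $\chi_{\lambda'} = \sign\cdot\chi_\lambda$ -- we may assume the level $d$ is achieved by the first row, i.e.\ $\lambda_1 = n-d$. Let $K = \sigma^{S_n}$ and let $\delta = |K|/n!$ be its density. For any $p \ge 1$, Hölder's inequality with exponents $p/(p-1)$ and $p$ gives
\[
\delta\,|\chi_\lambda(\sigma)| \;=\; \Bigl|\E_{\pi\sim S_n}\bigl[1_K(\pi)\chi_\lambda(\pi)\bigr]\Bigr| \;\le\; \|1_K\|_{p/(p-1)}\,\|\chi_\lambda\|_p \;=\; \delta^{(p-1)/p}\,\|\chi_\lambda\|_p,
\]
hence $\bigl|\chi_\lambda(\sigma)/\chi_\lambda(1)\bigr| \le \delta^{-1/p}\,\|\chi_\lambda\|_p/\chi_\lambda(1)$. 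The hypothesis $\delta \ge n^{-2\alpha n^{\alpha}}$ bounds the first factor by $\exp(2\alpha n^{\alpha}\log n/p)$; the problem is thereby reduced to a sharp estimate on the normalized $L^p$-norm of a level-$d$ character together with an optimal choice of $p$.

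\textbf{The main estimate.} The heart of the argument is the bound
\[
\frac{\|\chi_\lambda\|_p}{\chi_\lambda(1)} \;\le\; \Bigl(\frac{Cdp}{n\log p}\Bigr)^{d}
\]
for characters of level $d$ -- equivalently $\|\chi_\lambda\|_p \le (C'p/\log p)^{d}$, using that $\chi_\lambda(1)$ is comparable to $(n/d)^{d}$ up to hook-length fluctuations. I would derive this from hypercontractivity for global functions on $S_n$, in the sharp form of Keevash--Lifshitz~\cite{keevash2023sharp} (following \cite{filmus2020hypercontractivity}). Two inputs are needed. First, $\chi_\lambda$ has ``degree'' at most $d$: by definition of level it lies in the span of the irreducible characters whose Young diagram has first row $\ge n-d$. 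Second, and this is where Young's branching rule enters: the restriction of $\chi_\lambda$ to a coset $\{\pi : \pi(a_1)=b_1,\dots,\pi(a_j)=b_j\}$ is, after relabelling the moved points, a signed sum of characters of $S_{n-j}$ obtained by iterating the branching rule $j$ times; bounding the number of constituents and their multiplicities shows that every restriction of $\chi_\lambda$ to a coset of codimension $j \le d$ has $L^2$-norm at most $r^{j}\|\chi_\lambda\|_2$ for an absolute constant $r$, i.e.\ $\chi_\lambda$ is $(d,r)$-global. Feeding globalness into the hypercontractive inequality yields the displayed bound; it is the \emph{sharp} form of the theorem -- not its cruder predecessors, which give a factor $p^{d}$ -- that produces the Bell-number-type $(p/\log p)^{d}$, and this refinement is exactly what removes the $\epsilon$ from the Larsen--Shalev exponent~\cite{larsen2008characters}.

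\textbf{Optimization.} Combining the two steps,
\[
\Bigl|\frac{\chi_\lambda(\sigma)}{\chi_\lambda(1)}\Bigr| \;\le\; \exp\!\Bigl(\frac{2\alpha n^{\alpha}\log n}{p}\Bigr)\Bigl(\frac{Cdp}{n\log p}\Bigr)^{d}.
\]
Taking $p \ee \alpha n^{\alpha}\log n/d$ makes the exponential factor $e^{O(d)}$, which is absorbed into the constant, and turns the second factor into $n^{d(\alpha-1)}\bigl(\tfrac{C''\alpha\log n}{\log(\alpha n^{\alpha}\log n)-\log d}\bigr)^{d}$ -- the asserted bound after renaming $C''$ to $C$. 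The assumption $d \le \alpha n^{\alpha}\log n$ is precisely what makes this $p$ at least $1$ and the denominator positive (when $d$ sits at the upper end the bound is vacuous, which is harmless since ratios are always $\le 1$).

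\textbf{Main obstacle.} All of the work is in the main estimate. Showing that irreducible characters are global with an \emph{absolute} globalness constant requires a careful combinatorial analysis of the branching rule -- controlling the multiplicities that appear when $j$ boxes are removed from a diagram with $\lambda_1 = n-d$ -- and one must then invoke the sharp global-hypercontractivity theorem rather than any of its precursors, since the crude version would lose a spurious $(\log n)^{d}$ and fail to improve on Larsen--Shalev. A secondary point is uniformity over all $\lambda$ of a fixed level: because $\chi_\lambda(1)$ varies by hook-length factors, one works with the normalized character throughout and uses only the two-sided bound $(n/Cd)^{d}\le\chi_\lambda(1)\le (Cn)^{d}$. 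One also has to check that the chosen $p$ lies in the range where the hypercontractive estimate is valid, which is why the companion results in the introduction carry the mild restriction $\alpha < 1 - C\log\log n/\log n$.
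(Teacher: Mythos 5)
Your proposal is correct and follows essentially the same route as the paper: H\"older against the normalized indicator of the conjugacy class, the $L^q$-character-norm bound obtained from globalness plus sharp hypercontractivity (the paper's Theorem \ref{thm:main}, routed through Theorem \ref{thm:Upper bounds on Fourier coefficients}), and the choice $q\asymp\alpha n^{\alpha}\log n/d$. The one imprecision is your globalness claim $\|(\chi_\lambda)_{I\to J}\|_2\le r^{|I|}\|\chi_\lambda\|_2$: the correct parameter is $r^{|I|}(Cd/n)^{d}\chi_\lambda(1)\asymp r^{|I|}C^{d}\chi_{\tilde\lambda}(1)$, which can be as large as $C^{d}\sqrt{d!}$, but since this factor enters the hypercontractive inequality only through $\gamma^{1-2/q}$ it is absorbed after dividing by $\chi_\lambda(1)$, so the normalized main estimate you display is exactly the one the paper proves and the rest of your optimization goes through.
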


It turns out that this bound is also very close to optimal:
\begin{thm}\label{thm: lower bound to actual upper bound on character ratio}
There exists an absolute constant $c>0,$ such that for every $\alpha \in(0,1)$ there exists $n_0 \in \NN$ such that the following holds for every $n \ge n_0$. There exists a permutation $\sigma\in S_n$ whose conjugacy class has density $\ge n^{-2 \alpha n^{\alpha}}$, such that for every character $\chi$ of $S_n$ of level $d \le 0.5\alpha n^{\alpha}$ we have 
\[
\left| \frac{\chi(\sigma)}{\chi(1)}\right|\ge  n^{d(\alpha -1)}(c\alpha)^{d}.
\]
\end{thm}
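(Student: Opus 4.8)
\textbf{Proof proposal for Theorem~\ref{thm: lower bound to actual upper bound on character ratio}.}

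The plan is to exhibit a single permutation $\sigma$ that simultaneously witnesses the near-tightness of the upper bound for \emph{all} levels $d$ in the stated range. Guided by the construction discussed in the mixing-time section, I would take $\sigma$ to have roughly $\alpha n^{\alpha}$ cycles, arranged so that its conjugacy class has the required density; concretely, let $m = n - \lfloor n^{\alpha}\rfloor$ or a nearby value, and let $\sigma$ consist of a single long cycle together with enough short cycles (say, all of length $2$, or mostly fixed points with a handful of transpositions to control parity and cycle count) so that the number of cycles is $\Theta(\alpha n^{\alpha})$ and the centralizer has order $\le n^{2\alpha n^{\alpha}}$; a direct computation with the centralizer formula $\prod_i i^{m_i} m_i!$ should confirm the density lower bound $\ge n^{-2\alpha n^{\alpha}}$, since a conjugacy class of a permutation with $k$ cycles has density roughly $n^{-(n-k)(1+o(1))}$ and here $n-k = n - \Theta(n^\alpha)$, which needs the comparison $(n - \Theta(n^\alpha))\log n$ versus $2\alpha n^\alpha \log n$ --- wait, this forces me instead to make $\sigma$ have \emph{few} cycles only in the sense that $n$ minus the number of cycles is small, i.e. $\sigma$ should be close to the identity. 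So the right choice is: $\sigma$ fixes $n - \lfloor 2\alpha n^{\alpha}\rfloor$ points (up to parity adjustment) and acts as a product of $\lfloor \alpha n^{\alpha}\rfloor$ transpositions on the rest; then the support of $\sigma$ has size $\Theta(\alpha n^\alpha)$, the centralizer has order at least $(n - \Theta(\alpha n^\alpha))!$ divided by a small factor, and the density is $\ge n^{-2\alpha n^\alpha}$ as required.

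With $\sigma$ fixed, the core of the argument is a lower bound on $|\chi_\lambda(\sigma)/\chi_\lambda(1)|$ for every $\lambda$ of level $d \le 0.5\alpha n^{\alpha}$. I would reduce to the two extreme families of low-level characters: $\lambda = (n-d, \mu)$ with $\mu \vdash d$ (and the conjugates obtained by transposing), since a general level-$d$ character is a near-trivial tensor-type perturbation and its ratio at a permutation supported on $\Theta(\alpha n^\alpha)$ points is governed by the same leading term. For these characters one has an asymptotic expansion: when $\sigma$ has support of size $s = \Theta(\alpha n^\alpha)$ and $d$ is small, $\chi_\lambda(\sigma)/\chi_\lambda(1) = \bigl(1 + O(s/n)\bigr)^{\pm 1}$-type corrections times a main term of order $(s/n)^{?}$ --- more precisely, I would use the known fact (derivable from Roichman-type estimates, or directly from the branching rule as developed earlier in this paper) that for $\lambda$ of level $d$, $\chi_\lambda(\sigma)/\chi_\lambda(1)$ is well approximated by the ``first-order'' contribution, which for a permutation that is a product of $k \approx \alpha n^\alpha$ transpositions behaves like $\bigl(c k / n\bigr)^{d}$ for a suitable constant $c$ coming from the eigenvalue of the transposition walk on the degree-$d$ part. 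Since $k/n \asymp \alpha n^{\alpha-1}$, this gives $(c\alpha n^{\alpha-1})^d = n^{d(\alpha-1)} (c\alpha)^d$, exactly the claimed bound. The parity correction (replacing a few transpositions by $3$-cycles, or absorbing one extra fixed point) changes $k$ by $O(1)$ and hence only affects the constant $c$.

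The main obstacle I anticipate is establishing the lower bound on the character ratio with the correct constant uniformly over all $\lambda$ of level $\le 0.5\alpha n^\alpha$, rather than just for the standard representation $d=1$. The subtlety is that for larger $d$ the ``first-order'' heuristic must be justified: one needs that the higher-order terms in the expansion of $\chi_\lambda(\sigma)/\chi_\lambda(1)$ (those involving products of the corrections from several transpositions of $\sigma$ landing in different rows/columns) do not cancel the main term. I would handle this by choosing $\sigma$'s transpositions to act on a \emph{designated} block of coordinates and using the combinatorial interpretation of $\chi_\lambda$ via the Jucys--Murphy elements or via counting standard Young tableaux: the ratio $\chi_\lambda(\sigma)/\chi_\lambda(1)$ for $\sigma$ a product of $k$ disjoint transpositions equals the normalized trace, which expands as a nonnegative-dominated sum when $\lambda$ is far from self-conjugate in the relevant direction, so no cancellation occurs and the bound $(c k/n)^d$ holds from below. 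Packaging this cleanly --- ideally by invoking the same branching-rule machinery already set up for the upper bound Theorem~\ref{thm: actual upper bound on character ratio} and running it in reverse on this specific $\sigma$ --- is where most of the work lies; everything else (the density computation, the parity fix, the range of $d$) is routine.
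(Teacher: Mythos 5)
There is a genuine gap, and it is right at the start: the permutation you settle on does not satisfy the density hypothesis. Density $\ge n^{-2\alpha n^{\alpha}}$ means the conjugacy class is \emph{large}, equivalently the centralizer is \emph{small} (of order at most $n^{2\alpha n^{\alpha}}$). A permutation fixing $n-\lfloor 2\alpha n^{\alpha}\rfloor$ points and acting as $k=\lfloor\alpha n^{\alpha}\rfloor$ disjoint transpositions has centralizer of order $(n-2k)!\,2^{k}k!$, so its class has density $\frac{1}{(n-2k)!\,2^{k}k!}\approx n^{-(1-o(1))n}$, astronomically below $n^{-2\alpha n^{\alpha}}$ when $\alpha<1$. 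Your mid-paragraph ``correction'' was backwards: the heuristic $n^{-(n-k)}$ for the density of a class with $k$ cycles fails badly here (for $\ell$ fixed points plus one $(n-\ell)$-cycle the centralizer is $\ell!\,(n-\ell)\le n^{\alpha n^{\alpha}+2}$, which is exactly what is needed), and your \emph{first} instinct --- a single long cycle together with about $\alpha n^{\alpha}$ fixed points --- is the correct choice and is the one the paper makes. Relatedly, the asymptotic you assert for your $\sigma$ is also wrong: for a product of $k=o(n)$ disjoint transpositions the standard character gives ratio $\frac{n-2k-1}{n-1}\approx 1$, not $\Theta(k/n)$; ratios of order $(c\ell/n)^{d}$ arise precisely for the long-cycle permutation.

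Second, the machinery you propose for the lower bound on the character ratio (first-order expansions, Jucys--Murphy elements, and a ``no cancellation'' argument that you yourself flag as the hard part) is not needed and would be delicate to make rigorous uniformly in $d$. For $\sigma$ with $\ell=\lceil\alpha n^{\alpha}\rceil$ fixed points and one $(n-\ell)$-cycle, Lemma~\ref{lem:murnaghan nakayama remove long cycles} gives the exact identity $\chi_{\lambda}(\sigma)=\chi_{\mu}(1)$, where $\mu\vdash\ell$ is obtained from $\lambda$ by removing $n-\ell$ boxes from the first row (the long cycle can only be peeled off the first row, which is legitimate since $d\le 0.5\alpha n^{\alpha}$ guarantees $\ell\ge d+\lambda_2$). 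Then the two elementary dimension bounds $\chi_{\mu}(1)\ge\binom{\ell-d}{d}\chi_{\tilde{\lambda}}(1)$ and $\chi_{\lambda}(1)\le\binom{n}{d}\chi_{\tilde{\lambda}}(1)$ (Lemmas~\ref{lem: dimension of characters} and~\ref{lem: dimension of characters converse}) yield $\left|\chi_{\lambda}(\sigma)/\chi_{\lambda}(1)\right|\ge\binom{\ell-d}{d}/\binom{n}{d}\ge(c\ell/n)^{d}=n^{d(\alpha-1)}(c\alpha)^{d}$, with no cancellation issues arising at any point.
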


\subsection{Computing the asymptotics of character norms}

We now switch to a more analytic perspective, and study bounds on the $L^q$-norms of the characters. We will mainly be interested in the linear space of class functions, i.e., the functions that are constant on each conjugacy class. The functions on the symmetric group $S_n$ are equipped with the standard inner product 
\[\langle f, g \rangle =\mathbb{E}_{\sigma\sim S_n}[f(\sigma)g(\sigma)] := \frac{1}{n!}\sum_{\sigma \in S_n}f(\sigma)g(\sigma),\]
where the notation $\sigma \sim A$ denotes that $\sigma$ is chosen uniformly out of $A$.
We also use the $L^p$ norms 
\[\|f\|_p = \mathbb{E}^{1/p}[|f|^p].\]
The irreducible characters of the symmetric group form an orthonormal basis for the space of class functions with respect to the standard inner product. 
The coefficients 
\[
\hat{f}(\chi):= \langle f, \chi \rangle
\]
are known as the (nonabelian) \emph{Fourier coefficients} of $f$. Unlike the abelian setting where the characters take values in the unit circle and their $q$-norms are always 1, in the nonabelian setting estimating the $q$-norms of the characters is highly non-trivial. On the other hand, it may reveal lots of precious information about the character ratios, the conjugacy classes of the group and its representations, as was demonstrated in the breakthrough result of Guralnick, Larsen, and Tiep \cite{guralnick2020character} for the case of groups of Lie type. Therefore, consider the following problem.

\begin{problem}
Given an irreducible character $\chi$ of the symmetric group and $q\ge 1$, find the best possible estimate on $\|\chi\|_q.$
\end{problem}

  When $q$ is an even integer, the $q$-norms of the characters have a simple representation theoretic interpretation. The $q$-norm of the character corresponding to a representation $V$ is the $q$th root of the multiplicity of the trivial representation inside the representation $V^{\otimes q}$. \remove{While this representation theoretic information can definitely be of use\footnote{It can be used to deduce that if $\pi_1$ is a sub-representation of $\pi_2,$ then the $q$-norm of the trace of $\pi_1$ is at most the $q$-norm of the trace of $\pi_2$.}, it turns out that the problem of decomposing tensor products into irreducible characters is notoriously difficult.} We prove the following upper bound on the $L^q$-norms of the characters. 

\begin{thm}\label{thm:main}
There exists an absolute constant $C>0,$ such that the following holds. 
Let $d$ be a positive integer, and let $q\ge 2$. Let $\chi$ be a character of level $d.$
Then 
\[
\| \chi \|_q \le \left( \frac{Cq}{\log q} \right)^d\left(\frac{d^d \chi(1)}{n^d}\right)^{1-2/q}. 
\]
\end{thm}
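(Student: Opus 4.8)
The plan is to bound $\|\chi\|_q$ by expressing the $q$-norm through the random-walk / noise-type operator $T_\sigma$ and then invoke hypercontractivity for global functions. Concretely, for even integer $q$, $\|\chi\|_q^q = \mathbb{E}_{\sigma_1,\dots,\sigma_{q-1}}\left[\prod \chi(\sigma_i^{\pm 1}\cdots)\right]$ unpacks (via orthogonality of characters and the fact that $\widehat{\chi^{*k}}$ is supported on $\chi$ with eigenvalue $(\chi(1)/\chi(1))$-type factors) into a statement about the multiplicity of the trivial representation in $V_\lambda^{\otimes q}$, where $\chi=\chi_\lambda$ has level $d$. The key structural input is that a level-$d$ character, restricted appropriately, is a ``global'' function in the sense of \cite{keevash2023sharp}: its mass is spread out because $\lambda_1 \ge n-d$ forces the Young diagram to be a near-row (or near-column), so $\chi_\lambda$ lives low in the Young/Johnson-scheme filtration. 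I would first reduce to the case $\lambda_1 = n-d$ (the column case $\lambda_1' = n-d$ being handled by the sign twist $\chi_\lambda \leftrightarrow \chi_{\lambda'}$, which preserves all $q$-norms), and then realize $\chi_\lambda$ inside the permutation module $M^{(n-d,1^d)}$ or $M^{(n-d,d)}$ on $d$-tuples / $d$-sets, where hypercontractivity is available.

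The main steps, in order: \textbf{(1)} Interpolation setup. It suffices to prove the bound for $q=2$ trivially ($\|\chi\|_2 = 1$, and the claimed bound is $\ge 1$ since $d^d\chi(1) \ge n^d$ fails in general — so actually one checks the $q\to\infty$ and $q=2$ endpoints are consistent) and then for large even $q$; the stated form $(Cq/\log q)^d (d^d\chi(1)/n^d)^{1-2/q}$ is exactly what a log-Sobolev / hypercontractive estimate produces after optimizing the noise parameter, with the $1-2/q$ exponent being the tell-tale signature of a two-function hypercontractive inequality $\|Tf\|_q \le \|f\|_2^{1-2/q}\|f\|_q^{2/q}$-style bound applied to a near-extremal global function. \textbf{(2)} Dimension estimate: $\chi(1) = \dim V_\lambda$ for $\lambda = (n-d,\mu)$ with $\mu \vdash d$ is, by the hook-length formula, at most $\binom{n}{d}\cdot(\text{something depending only on }d)$, and more precisely comparable to $n^d/d!$ times a bounded-in-$n$ factor; this is where the ratio $d^d\chi(1)/n^d$ enters as roughly $d^d/d! \approx e^d$, absorbable into the $C^d$. \textbf{(3)} The core: embed the class function $\chi$ into the global regime and apply the hypercontractivity theorem for global functions on $S_n$ from \cite{keevash2023sharp} (or its Johnson-scheme analog), using that the relevant ``degree'' of $\chi_\lambda$ as a function on the slice/permutation module equals the level $d$, so the hypercontractive constant is $(Cq/\log q)^d$ — precisely as for degree-$d$ functions on the noisy hypercube.

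The hard part will be step (3): making rigorous the claim that a level-$d$ irreducible character is ``global enough'' to feed into the global hypercontractivity machine, and in particular controlling the generalized influences / restrictions of $\chi_\lambda$ viewed as a function on permutations. Characters are very far from Boolean, and global hypercontractivity as stated typically applies to functions with small restrictions to ``cylinders'' (fixing the images of few points); one must show that fixing the images of $k$ points multiplies the relevant norm of $\chi_\lambda$ by a controlled factor — essentially a quantitative version of Young's branching rule tracking how level behaves under restriction $S_n \downarrow S_{n-k}$. I expect the argument routes through writing $\chi = \sum_{\nu} c_\nu \psi_\nu$ in a basis adapted to the filtration by $\lambda_1$ (the ``$d$-th level'' space), bounding the coefficients, and then applying hypercontractivity level-by-level; matching the sharp constant $(Cq/\log q)^d$ rather than a cruder $C^{dq}$ is what forces one to use the \emph{sharp} form from \cite{keevash2023sharp} rather than the earlier \cite{filmus2020hypercontractivity} version. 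The endpoint consistency and the passage from even integer $q$ to all real $q\ge 2$ (by log-convexity of $q \mapsto \log\|\chi\|_q$, i.e., Lyapunov's inequality) is routine and I would relegate it to a short final paragraph.
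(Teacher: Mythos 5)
Your high-level strategy coincides with the paper's: reduce to $\lambda_1=n-d$ by the sign twist, use that level-$d$ characters are degree-$d$ polynomials in the dictators (Ellis--Friedgut--Pilpel), establish that $\chi_\lambda$ is global, and feed it into the sharp hypercontractivity theorem of Keevash--Lifshitz. The $\left(Cq/\log q\right)^d$ factor then comes from the fact that characters are eigenvectors of any bi-invariant operator, so $\|T_\rho\chi\|_q = |\langle T_\rho\chi,\chi\rangle|\cdot\|\chi\|_q \ge (c\rho)^d\|\chi\|_q$, combined with $\|T_\rho\chi\|_q\le\gamma^{1-2/q}$ at the optimal noise rate $\rho\asymp \log q/q$. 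Your detour through even integer $q$, tensor-product multiplicities, and the permutation modules $M^{(n-d,d)}$ is unnecessary: the hypercontractive inequality applies directly for all real $q\ge 2$, so no Lyapunov interpolation is needed, and working on $S_n$ itself (rather than a Johnson-scheme quotient) is exactly what the Keevash--Lifshitz theorem is set up for.

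The genuine gap is that you leave the crux --- the globalness of $\chi_\lambda$ --- as an expectation rather than a proof, and the sketch you give for it would not go through as stated. Globalness requires bounding $\|\chi_{I\to J}\|_2$ for \emph{all} pairs of tuples $I,J$; when $I\ne J$ the set $U_{I\to J}$ is a coset of $S_{n-m}$, not a subgroup, so Young's branching rule (which controls $\chi|_{S_{n-m}}$) does not directly apply. The paper resolves this by decomposing $\chi^2$ into irreducible characters and invoking a positivity lemma of Avni--Glazer, namely $\frac{1}{|H|}\sum_{h\in H}\chi_i(gh)\le\langle\chi_i|_H,\chi_{\mathrm{triv}}\rangle_H$, to reduce the coset average to the subgroup average, i.e., $\|\chi_{I\to J}\|_2\le\|\chi_{I\to I}\|_2$. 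Only then does the branching rule enter: writing $\chi|_{S_{n-m}}=\sum_\mu c_\mu\chi_\mu$, one bounds $\|\chi_{I\to I}\|_2^2=\sum_\mu c_\mu^2\le\left(\sum_\mu c_\mu\right)^2$ and shows $\sum_\mu c_\mu\le 2^m\tilde\chi(1)$ by counting removal processes (at most $2^m$ choices for which steps remove a first-row cell, times $\dim\tilde\chi$ for the remainder); combined with $\chi(1)\ge\binom{n-d}{d}\tilde\chi(1)$ this yields the $(2,(Cd/n)^d\chi(1))$-globalness that the hypercontractivity theorem needs. Without the coset-to-subgroup reduction your argument only certifies globalness for $I=J$, which is insufficient to invoke the theorem.
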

Note that when $q<2$, we always have $\|\chi\|_q\le \|\chi\|_2 = 1$.

Theorem \ref{thm:main} is our main tool in this work, and it is applied in virtually all our other results, including the character bounds of Theorems \ref{thm: few cycles} and \ref{thm: actual upper bound on character ratio}. 

Our result is sharp up to the value of the implicit constant $C$ unless $d$ is tremendously larger with respect to $q$, as implied in the following matching lower bound.

\begin{thm}\label{thm:matching lower bound for main}
There exists an absolute constant $c>0$, such that the following holds. Let $q\ge 2$ and let $d < \min(e^{cq},\frac{n}{q+1})$ be an integer. Let $\chi$ be a character of level $d$. Then  
\[
\| \chi\|_q \ge \left( \frac{cq}{\log (qd)} \right)^d\left(\frac{d^d \chi(1)}{n^d}\right)^{1-2/q}. 
\]
\end{thm}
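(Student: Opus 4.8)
The plan is to lower-bound $\|\chi\|_q^q=\E_{\sigma\sim S_n}\bigl[|\chi(\sigma)|^q\bigr]$ by restricting the expectation to a carefully chosen union of conjugacy classes. Using the transpose symmetry $|\chi_{\lambda'}(\sigma)|=|\chi_\lambda(\sigma)|$ (which preserves $\chi(1)$, the level, and every $L^q$-norm) I assume $\lambda_1=n-d$, and write $\bar\lambda:=(\lambda_2,\lambda_3,\dots)\vdash d$. For a positive integer $m$ let $\mathcal E_m\subseteq S_n$ be the set of permutations with exactly $m$ fixed points and no cycle of length in $\{2,\dots,d\}$. I will establish three facts: (i) $\chi_\lambda$ is \emph{constant} on $\mathcal E_m$, with value $P_\lambda(m):=\sum_{i\ge0}(-1)^i f^{\bar\lambda/(1^i)}\binom{m}{d-i}$, where $f^{\bar\lambda/(1^i)}$ is the number of standard Young tableaux of the skew shape obtained from $\bar\lambda$ by deleting the top $i$ cells of its first column; (ii) $P_\lambda(m)\ge\tfrac12 f^{\bar\lambda}\binom md$ whenever $m\ge 4d$; and (iii) $\Pr_\sigma[\sigma\in\mathcal E_m]\ge c_1/(m!\,d)$ for an absolute constant $c_1>0$ and every $m$ with $n-m\ge 2$. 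Granting these, for $4d\le m\le n-2$ one gets $\|\chi\|_q^q\ge\Pr[\mathcal E_m]\,P_\lambda(m)^q\ge \tfrac{c_1}{m!\,d}\bigl(\tfrac12 f^{\bar\lambda}\bigr)^q\binom md^q$, and I would then choose $m$ to maximize $\binom md^q/m!$; by Stirling the maximizer is $m\asymp d+\frac{qd}{\log(qd)}$, and plugging it in — together with $\chi(1)=f^\lambda\le\binom nd f^{\bar\lambda}\le\frac{n^d}{d!}f^{\bar\lambda}$, which converts $f^{\bar\lambda}$ into $\chi(1)$ and $n,d$ — reproduces $\|\chi\|_q\ge\bigl(\tfrac{cq}{\log(qd)}\bigr)^d\bigl(\tfrac{d^d\chi(1)}{n^d}\bigr)^{1-2/q}$ with $c$ absolute. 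The hypothesis $d<e^{cq}$ (with $c$ small enough) is exactly what forces the optimal $m$ to exceed $4d$ so that (ii) applies, and $d<n/(q+1)$ guarantees that $n$ is large enough for this range of $m$ to be nonempty and for (iii) to kick in. For $q$ below an absolute constant — where $d$ is then bounded — the inequality is checked directly: the $m=4d$ construction already gives a positive absolute-constant lower bound (or $\|\chi\|_q\ge\|\chi\|_2=1$ when $n$ too is bounded), which beats the then-bounded right-hand side once $c$ is shrunk.

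\textbf{Fact (i): branching instead of Murnaghan--Nakayama.}
First, $\chi_\lambda(\sigma)$ is a polynomial in the cycle counts $a_1(\sigma),\dots,a_d(\sigma)$: the Young permutation characters $\{\pi_\mu:\mu_1\ge n-d\}$ span the same space as $\{\chi_\nu:\nu_1\ge n-d\}$, with a unit-diagonal transition matrix triangular for the order refining ``$n-\mu_1$, then dominance on $(\mu_2,\mu_3,\dots)$'', so $\chi_\lambda$ is an integer combination of such $\pi_\mu$; and for $\mu_1\ge n-d$, $\pi_\mu(\sigma)$ counts $\sigma$-invariant ordered set-partitions whose non-first blocks have total size $\le d$ and are therefore assembled from cycles of length $\le d$ — manifestly a polynomial in $a_1,\dots,a_d$. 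Since $a_1=m$ and $a_2=\dots=a_d=0$ throughout $\mathcal E_m$, $\chi_\lambda$ is constant there, and its value is read off from the single element $\sigma_0\in\mathcal E_m$ that has $m$ fixed points and one $(n-m)$-cycle (which lies in $\mathcal E_m$ because $n-m>d$ in the relevant range). Restricting $S^\lambda$ to $S_m\times S_{n-m}$, and using that $\chi_\nu$ of a full cycle vanishes unless $\nu$ is a hook, in which case it equals $(-1)^{\ell(\nu)-1}$, gives $\chi_\lambda(\sigma_0)=\sum_i(-1)^i f^{\lambda/(n-m-i,1^i)}$; for $m$ bounded away from $n$ the skew shape $\lambda/(n-m-i,1^i)$ splits into a length-$(m+i-d)$ horizontal strip in row $1$ and a copy of $\bar\lambda/(1^i)$, so $f^{\lambda/(n-m-i,1^i)}=\binom{m}{d-i}f^{\bar\lambda/(1^i)}$, which is (i). This is precisely where the branching philosophy of the paper pays off: $\sigma_0$ has only one long cycle, so no ribbon-tableau bookkeeping is needed beyond the elementary hook-character-on-a-cycle fact.

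\textbf{Facts (ii) and (iii).}
The crux of (ii) is the sharp inequality $f^{\bar\lambda/(1^i)}\le f^{\bar\lambda}$, immediate from the coproduct identity $f^{\bar\lambda}=\sum_{\mu\vdash i}f^\mu f^{\bar\lambda/\mu}\ge f^{(1^i)}f^{\bar\lambda/(1^i)}=f^{\bar\lambda/(1^i)}$; combined with $\binom m{d-i}/\binom md\le(d/(m-d))^i$ (valid for $m\ge 2d$) this gives $\sum_{i\ge1}f^{\bar\lambda/(1^i)}\binom m{d-i}\le f^{\bar\lambda}\binom md\cdot\frac{d}{m-2d}\le\tfrac12 f^{\bar\lambda}\binom md$ once $m\ge 4d$, and (ii) follows since all terms of $P_\lambda(m)$ beyond $i=0$ are nonnegative in absolute value. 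It is important to use the sharp bound rather than a crude $f^{\bar\lambda/(1^i)}\le\binom di f^{\bar\lambda}$: the latter would only make the leading term of $P_\lambda(m)$ dominate for $m=\Theta(d^2)$, which is too large a fixed-point count to reach the target in the regime $q\lesssim d\log d$. For (iii), write $\mathcal E_m=\{a_1=m\}\cap\{a_2=\dots=a_d=0\}$: the first event has probability $D_{n-m}/(m!(n-m)!)\ge 1/(3\,m!)$ (using that the derangement numbers satisfy $D_N\ge N!/3$ for $N\ge2$), and conditionally $\sigma$ restricted to its $n-m$ moving points is a uniform derangement of $S_{n-m}$, which has every cycle of length $>d$ with probability $\ge c_1'/d$ by the classical estimate $\#\{\tau\in S_N:\text{every cycle has length}>d\}\ge c_1'\,N!/d$ (applicable since $n-m\gg d$ in the relevant range).

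\textbf{Main obstacle.}
The substantive part is Fact (i): executing the $S_m\times S_{n-m}$ branching and the skew-shape factorization $\lambda/(n-m-i,1^i)=(\text{strip})\sqcup(\bar\lambda/(1^i))$ correctly over the relevant range of $m$, and arguing the polynomiality in $a_1,\dots,a_d$ cleanly enough that $\chi_\lambda$ is genuinely \emph{constant} on $\mathcal E_m$ — that constancy is what lets us harvest the density $\asymp 1/(m!\,d)$ rather than the useless $\asymp1/(m!(n-m))$ one would get from a single conjugacy class. Once the closed form $P_\lambda(m)=\sum_i(-1)^i f^{\bar\lambda/(1^i)}\binom m{d-i}$ and the bound $P_\lambda(m)\ge\tfrac12 f^{\bar\lambda}\binom md$ are in hand, the optimization over $m$ and the bookkeeping needed to match the statement's normalization (tracking the $1-2/q$ exponent and converting $f^{\bar\lambda}$ into $\chi(1)/\binom nd$) are routine, if slightly delicate in the boundary regimes.
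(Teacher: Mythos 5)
Your proposal is correct and is, at its core, the same argument as the paper's: both lower-bound $\|\chi\|_q^q$ by restricting to the permutations with $m$ fixed points and all remaining cycles of length $>d$ (your $\mathcal{E}_m$, the paper's event $E$); both show $\chi_\lambda$ is constant there with value $\gtrsim \binom{m}{d}\chi_{\tilde\lambda}(1)$; both bound the density of this set below by $\asymp 1/(m!\,d)$ --- your ``classical estimate'' for permutations with no short cycles is exactly Proposition~\ref{prop:probability no short cycles}, which the paper proves by a short recursion and which you should prove rather than cite as folklore; and both optimize at $m\asymp qd/\log(qd)$, using $d<e^{cq}$ to force $m\gg d$ and $d<n/(q+1)$ to keep $n-m>d$. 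The only genuine divergence is local, in how the character value on $\mathcal{E}_m$ is computed: the paper peels off the long cycles one at a time via the Murnaghan--Nakayama rule (Lemma~\ref{lem:murnaghan nakayama remove long cycles}) and then invokes the tableau-counting bounds of Lemmas~\ref{lem: dimension of characters} and~\ref{lem: dimension of characters converse}, whereas you first establish constancy via polynomiality in the cycle counts $a_1,\dots,a_d$ and then evaluate at the single-long-cycle representative by branching to $S_m\times S_{n-m}$ and the hook-character formula, yielding the explicit alternating sum $P_\lambda(m)=\sum_i(-1)^i f^{\bar\lambda/(1^i)}\binom{m}{d-i}$ together with the clean domination $f^{\bar\lambda/(1^i)}\le f^{\bar\lambda}$. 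Your route buys a closed form and hews to the paper's stated preference for branching over Murnaghan--Nakayama (though the evaluation of a hook character on a full cycle is itself usually derived from the latter); the paper's route is shorter. I see no substantive gap --- the deferred steps (the optimization over $m$, the conversion of $f^{\bar\lambda}$ into $\chi(1)$ via $\chi(1)\le\binom{n}{d}f^{\bar\lambda}$, and the passage to the exponent $1-2/q$ via $\chi(1)\ge\binom{n-d}{d}$) are the same routine computations the paper carries out.
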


One of the common applications of upper bounds on the $q$-norms of random variables $X$ is the following upper bounds on the probability that $X$ is large:
\[ 
\Pr[|X|>t]<\frac{\mathbb{E}[|X|^q]}{t^q}.
\]
In the context of the symmetric group, this will allow us to deduce an upper bound on $\chi(\sigma)$ for each permutation $\sigma$ that has few cycles (see Theorem \ref{thm: few cycles} below). This might seem surprising from a probabilistic point of view, as the $q$-norm of a function $f$ typically does not provide tight upper bounds for the value of $f(x)$ at a specific point $x$. The reason why we may deduce good bounds in the symmetric group is based on two facts. Firstly, the characters of the symmetric group are constant on each conjugacy class. Secondly, the conjugacy classes of the symmetric group tend to be huge. Therefore, a bound of the form  $\Pr[\chi >t] \le p(t,\chi)$ implies that $|\chi(\sigma)| \le t$ whenever the conjugacy class of $\sigma$ has more than $p(t,\chi)\cdot n!$ elements. This simple idea will play a central role in the proof of Theorem \ref{thm: few cycles}. 

\subsubsection*{The special case $d=1$}
To get a better intuition for our bound one may plug in the partition $\lambda = (n-1,1)$ and the character $\chi_{\lambda}.$ This character has level $1$ and $\chi(1)= n-1.$  Our result therefore yields 
\[ \|\chi\|_q = \Theta\left(\frac{q}{\log q} \right).
\]
The value of $\chi$ on a permutation $\sigma$ is the number of fixed points of $\sigma$ subtracted by one.
Therefore, our bound corresponds to the fact that the number of fixed points of a random permutation is well approximated by a Poisson-distributed variable $X\sim \text{Pois}(1)$ with a mean of $1$ (as shown by Montmort in 1708). Indeed, the Poisson distribution is known to satisfy $\|X\|_q=\Theta(\frac{q}{\log q})$ for $q \ge 2$.
 
\subsection{Methods: Globalness and hypercontractivity}
\label{subsec:globalness definitions}
Our approach to obtaining character bounds is to transcend the realms of class functions and view the low level characters merely as special cases of low degree functions on the symmetric group. 
The functions on the symmetric group of the form 
$\sigma \mapsto 1_{\sigma (i) =j}$
are known as the \emph{dictators} 
and are denoted by $x_{i\to j}$. The \emph{degree} of a function $f$ on the symmetric group is the minimal $d,$
such that there exists a multivariate polynomial $P$ of degree $d$ in $n^2$ variables indexed by $\{(i,j)\}_{i,j\in \{1,\dots,n\}}$, such that $f(\sigma) = P(x_{i\to j}(\sigma))$ for all $\sigma$. 
For example the number of fixed points is a function of degree $1$ and the number of 2-cycles can be easily seen to be of degree $2$. It turns out that up to multiplication by sign, level $d$ characters of the symmetric group are functions of degree $d$ (See Ellis, Friedgut, and Pilpel \cite[ Theorem 7]{ellis2011intersecting}).
\subsubsection*{Hypercontractivity for global functions}
The tool known as `hypercontractivity for global functions' was recently developed by Keevash, Long, Lifshitz, and Minzer \cite{keevash2021global}, who were inspired by the work of Khot, Minzer, and Safra \cite{khot2018pseudorandom}. They proved a hypercontractive inequality and used it to show that low degree functions over independent random variables have small $q$-norms, provided that they satisfy a certain requirement known as globalness.  Filmus, Kindler, Lifshitz, and Minzer~\cite{filmus2020hypercontractivity} were then able to obtain somewhat similar hypercontractivity for global functions in the symmetric group, where polynomials in the dictators replace the roles of polynomials in independent random variables. The lack of independence makes the problem significantly harder in the symmetric group. Recently, Keevash and Lifshitz \cite{keevash2023sharp}, building on the work of Keller, Lifshitz, and Marcus~\cite{Keller2023sharp}, obtained a sharp hypercontractivity theorem for global functions on the symmetric group. This allowed them to provide an upper bound on the $q$-norms of low degree functions in terms of an analogue notion of globalness, which we shall now describe.  
 \subsubsection*{Globalness in the symmetric group}
Let $m\le n$ and let $I,J$ be $m$-tuples each having distinct coordinates. 
We write $U_{I\to J}$ for the set of permutations sending the elements of $I$ to the elements of $J$ in the same order. We denote the restriction of $f$ 
to $U_{I\to J}$ as $f_{I\to J}$. When we write $\|f_{I\to J}\|_2$, we are referring to the 2-norm within the set $U_{I\to J}$, i.e.,
\[\|f_{I\to J}\|_2^2=\frac{1}{(n-m)!}\sum_{\sigma\in U_{I\to J}}f(\sigma)^2.\]
We say that a function $f$ is $(r,\gamma)$-\emph{global} if for all $m$ and all pairs of $m$-tuples of distinct coordinates $I,J$ we have $\|f_{I\to J}\|_2 \le r^{m}\gamma$. 

The main ingredient in our proof for the bound on character norms (Theorem \ref{thm:main}) is the following proposition, which may also be of independent interest. 
\begin{proposition}\label{prop:characters are global}
There exists an absolute constant $C>0$ such that the following holds. Let $d$ be a positive integer and let $\chi$ be a character of level $d$. Set  $\gamma = \left(\frac{Cd}{n}\right)^d\chi(1)$. Then $\chi$ is $(2, \gamma)$-global. 
\end{proposition}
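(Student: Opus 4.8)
The plan is to bound the restricted $L^2$-norms $\|\chi_{I\to J}\|_2$ by induction on the length $m$ of the tuples $I,J$, using Young's branching rule as the only representation-theoretic input. The base case $m=0$ is just $\|\chi\|_2 = 1 \le \gamma$ (after checking that $\gamma \ge 1$, which holds since a level-$d$ character has $\chi(1) \ge \binom{n}{d}/\mathrm{poly}$, comfortably larger than $(n/Cd)^d$). For the inductive step, the key observation is that restricting to $U_{I\to J}$ with $|I|=|J|=m$ is the same as first restricting the representation $V_\lambda$ from $S_n$ to $S_{n-m}$ (the subgroup fixing the coordinates in $J$, acting on the complementary coordinates), so that $\chi$ restricted to a coset $U_{I\to J}$ decomposes, via iterated branching, as a signed sum (in fact a genuine sum, up to an overall normalization coming from the coset structure) of characters $\chi_\mu$ of $S_{n-m}$ with $\mu$ running over partitions obtained from $\lambda$ by removing $m$ boxes. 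Crucially, removing $m$ boxes from a diagram with $n-\lambda_1 \le d$ (or $n-\lambda_1' \le d$) leaves a partition of level at most $d$, and the number of such $\mu$, counted with branching multiplicities, is controlled: it is the dimension of an appropriate skew representation, which one can bound by something like $m^d \cdot (\text{number of standard tableaux of the removed skew shape})$, and more importantly each resulting $\chi_\mu(1)$ is at most $\chi(1)$ divided by roughly $(n/m)$ to the number of boxes removed from the first row/column.

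The main steps, in order: (1) Set up the coset decomposition — show that for $\sigma_0 \in U_{I\to J}$ fixed, the map $\tau \mapsto \sigma_0 \tau$ identifies $U_{I\to J}$ with a copy of $S_{n-m}$, and that $\chi(\sigma_0 \tau)$, averaged appropriately, is governed by the branching of $V_\lambda \downarrow S_{n-m}$; quantify this so that $\|\chi_{I\to J}\|_2^2 = \sum_\mu c_\mu^2 \|\chi_\mu\|_2^2 + (\text{cross terms})$, and argue the cross terms vanish or are absorbed because distinct $\chi_\mu$ are orthogonal on $S_{n-m}$ — actually the clean statement is $\|\chi_{I\to J}\|_2^2 = \sum_\mu (\text{mult of } \mu \text{ in } \lambda/\text{after removing } m \text{ boxes along the } I\to J \text{ data})^2$, but one must be careful that $U_{I\to J}$ is a single coset, not the whole parabolic, so what appears is the branching coefficient for one specific box-removal order; this needs the statement that $\chi_\lambda$ restricted to the coset $w U_{[m]\to[m]}$ is $\sum_{\mu} (\text{entry of a branching matrix}) \chi_\mu$. (2) Bound each $\chi_\mu(1) \le (Cd/n)^{?}\chi_\lambda(1)$ type inequality using the hook-length formula or the ratio $\dim V_\mu / \dim V_\lambda$. (3) Bound the number of terms and the branching coefficients by $r^m$ with $r$ an absolute constant (this is where the "$2$" in $(2,\gamma)$-global comes from — one wants the per-box cost to be a fixed constant like $2$). (4) Combine: $\|\chi_{I\to J}\|_2 \le (\text{const})^m \cdot (Cd/n)^d \chi(1) = 2^m\gamma$.

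The main obstacle I expect is step (1): making precise exactly which signed/unsigned combination of lower characters arises when one restricts $\chi_\lambda$ to a single coset $U_{I\to J}$ (as opposed to restricting the representation to the Young subgroup $S_{n-m}\times S_m$ or to $S_{n-m}$), and showing the resulting quadratic form is diagonal in the $\chi_\mu$'s so that no cancellation-sensitive cross terms survive. Concretely, $U_{I\to J}$ is a right coset of $S_{n-m}$ (the pointwise stabilizer of $J$) inside $S_n$, and $\chi_\lambda$ on this coset is $\tau \mapsto \mathrm{tr}(\rho_\lambda(\sigma_0)\rho_\lambda(\tau))$ for $\tau \in S_{n-m}$; writing $\rho_\lambda(\sigma_0)$ in a branching-adapted basis, this becomes a sum over pairs of paths in the branching graph, and $\|\cdot\|_2^2$ over $S_{n-m}$ picks out, by Schur orthogonality, $\sum_\mu \frac{1}{\dim V_\mu}|(\text{block of }\rho_\lambda(\sigma_0)\text{ of type }\mu)|^2_{\mathrm{HS}}$. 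One then has to bound these Hilbert–Schmidt norms — that is really where the level hypothesis and an estimate on matrix entries of $\rho_\lambda$ (or equivalently a crude bound on $\dim(V_\lambda)/\dim(V_\mu)$ together with a counting of branching paths) must be inserted. The second-ranked difficulty is getting the constant down to $2$ rather than some larger absolute constant, which may require being slightly clever about how boxes are removed (prioritizing the first row) rather than using a completely symmetric bound.
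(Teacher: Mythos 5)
Your scaffolding matches the paper's: restrict to $U_{I\to J}$, decompose via iterated branching into characters $\chi_\mu$ of $S_{n-m}$, bound the branching multiplicities by $2^m$ times a quantity controlled by $\tilde{\chi}(1)$, and convert $\tilde{\chi}(1)\le\binom{n-d}{d}^{-1}\chi(1)=(Cd/n)^d\chi(1)$ at the end (the paper splits this last conversion off into Lemma \ref{lem: dimension of characters}, and handles $d\ge n/10$ separately by the trivial $\|\chi\|_\infty$ bound). But the step you yourself flag as the main obstacle --- passing from the single coset $U_{I\to J}$ to something governed by the branching of $V_\lambda\downarrow S_{n-m}$ --- is exactly the crux, and you leave it unresolved. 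The paper's solution is a lemma of Avni--Glazer: for any subgroup $H\le G$, irreducible $\chi$ and $g\in G$, one has $\frac{1}{|H|}\sum_{h\in H}\chi(gh)\le\langle\chi|_H,\chi_{\mathrm{triv}}\rangle_H$. Applying this to each irreducible constituent of the character $\chi^2$ (the trace of $V_\lambda\otimes V_\lambda$) and summing gives $\|\chi_{I\to J}\|_2^2\le\|\chi_{I\to I}\|_2^2$, which reduces the coset to the subgroup in one line; after that, $\|\chi_{I\to I}\|_2^2=\sum_\mu c_\mu^2\le(\sum_\mu c_\mu)^2$ and the branching count finishes the job. Note also that the decomposition is a genuine nonnegative sum, not a ``signed sum'': Young's branching rule has no signs, so there are no cancellation issues of the kind you worry about.

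Your alternative route via Schur orthogonality is actually salvageable, and it is worth seeing why, since you stopped just short. Writing $\chi(\sigma_0\tau)=\mathrm{tr}\bigl(\rho_\lambda(\sigma_0)\rho_\lambda(\tau)\bigr)$ for $\tau$ in the subgroup and working in a basis adapted to $\rho_\lambda|_{S_{n-m}}=\bigoplus_i\rho_{\mu_i}$, Schur orthogonality gives $\|\chi_{I\to J}\|_2^2=\sum_\mu\frac{1}{\dim V_\mu}\bigl\|\sum_{i:\mu_i\cong\mu}A^{(i)}\bigr\|_{\mathrm{HS}}^2$, where $A^{(i)}$ are the diagonal blocks of the unitary matrix $A=\rho_\lambda(\sigma_0)$. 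You do not need any estimate on matrix entries of $\rho_\lambda$: every diagonal block of a unitary matrix has operator norm at most $1$, hence $\|A^{(i)}\|_{\mathrm{HS}}\le\sqrt{\dim V_{\mu_i}}$, and the triangle inequality gives $\|\chi_{I\to J}\|_2^2\le\sum_\mu c_\mu^2$ --- the same bound the paper extracts from Avni--Glazer. As written, though, your proposal does not supply either of these closing arguments, so the central step remains a gap; the rest (the $2^m$ from choosing which removal steps touch the first row, and the dimension ratio) is essentially what the paper does.
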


Before moving on to applications, we would like to express the hope that the theory of degrees, globalness, and hypercontractivity, is generalizable to various other transitive permutation groups. Some evidence for the generality of the theory can be derived from the work of Ellis, Kindler, Lifshitz, and Minzer \cite{Ellis2023product} who proved hypercontractivity for many compact Lie groups. 
Additionally, Evra, Kindler, and Lifshitz \cite{Evra2023Hypercontractivity} proved a hypercontractivity theorem for general linear groups over finite fields. 
While their current results may not yet yield new character bounds, we see promising potential in the hypercontractive approach that could lead to successful outcomes in the future.
\subsection*{Structure of the paper}
In Section \ref{sec:other applications}, we present the applications of our upper bound on character norms (Theorem \ref{thm:main}) to Kronecker coefficients, upper bounds on Fourier coefficients, and product mixing. Section \ref{sec:proof of main theorem} is devoted to proving Theorem \ref{thm:main}. Section \ref{sec:converse} establishes its sharpness by proving a matching lower bound (Theorem \ref{thm:matching lower bound for main}). In Section \ref{sec:Kronecker} we apply Theorem \ref{thm:main} to quickly deduce upper bounds for the Kronecker coefficients. In Section \ref{sec:Fourier} we first upper bound Fourier coefficients of class functions (Theorem \ref{thm:Upper bounds on Fourier coefficients}), and then simplify it to bound character ratios (Theorem \ref{thm: few cycles}). In Section \ref{sec:mixing times} we deduce applications for mixing times. Finally, in Section \ref{sec:product mixing} we prove applications to product mixing of normal sets.

\subsection*{Acknowledgement}
We would like to thank Gil Kalai for encouraging us to connect the theory of hypercontractivity for global functions with representation theory and Peter Sarnak for encouraging us to find connections to mixing times over the symmetric group. We would also like to thank Yotam Shomroni, Shai Evra, Dor Minzer, Guy Kindler, Ohad Sheinfeld, Doron Puder, Peter Keevash, Gady Kozma, and Yuval Filmus for many helpful suggestions. Finally, N.\ L.\ would like to thank the Simons Institute for the Theory of Computing for hosting him while the research was conducted.

\section{Other applications of Theorem \ref{thm:main}}\label{sec:other applications}
In this section we present various applications of the upper bound to character norms introduced in Theorem \ref{thm:main}. These applications include Kronecker coefficients, upper bounds on Fourier coefficients, mixing times, and product mixing of normal sets.

\subsection{Kronecker coefficients}
The most obvious application of our work is related to the study of the Kronecker coefficients. It turns out that Theorem~\ref{thm:main} already implies bounds on Kronecker coefficients that are close to the best known.

Given a group, the first goal of representation theory is to determine the set of irreducible representations for the group and to compute their dimension. Having done that, the remaining task is to understand how naturally occurring representations decompose into irreducible components. One such representation is the tensor product of two irreducible representations. For compact groups, such as the unitary group, the task is well-established due to the Littlewood-Richardson rule. However, in the symmetric group this problem is much more difficult. In fact, from a computational standpoint it is impossible to solve in general. 

Given $\lambda, \mu,\nu \vdash n$, the multiplicity of the Specht module $V_{\nu}$ inside the tensor product $V_{\lambda}\otimes V_{\mu}$ is known as the \emph{Kronecker coefficient} corresponding to $\lambda,\mu$ and $\nu$. It is given by the character formula: 
\[ g(\lambda ,\mu, \nu):=\mathbb{E}[\chi_{\lambda}\chi_{\mu}\chi_{\nu}] = \langle \chi_{\lambda}\chi_{\mu}, \chi_{\nu}\rangle. \]  
The Kronecker coefficients are $\#\text{P}$-hard to compute, and $\text{NP}$ hard to decide whether they are zero or not. Therefore, the main direction of research concerning the Kronecker coefficients is computing them in the special cases where it is possible or deriving general bounds. The reader is referred to Panova's survey \cite{panova2023complexity} for additional information on the Kronecker coefficients.

\subsubsection*{Known upper bounds}
The following dimension-based inequality is immediate:
\begin{equation}\label{eqn:trivial kronecker bound}
    g(\lambda, \mu, \nu) \le \frac{\chi_{\lambda}(1)\chi_{\mu}(1)}{\chi_{\nu}(1)},
\end{equation}
which also implies $g(\lambda, \mu, \nu) \le \min(\chi_{\lambda}(1),\chi_{\mu}(1),\chi_{\nu}(1))$ (see, e.g.,~\cite{pak2019largest}). Until recently, almost no other general bound was known, and most of the works focused on analyzing specific families of partitions, including hook shapes~\cite{remmel1989formula, ballantine2005combinatorial} and two-row shapes~\cite{remmel1994kronecker, rosas2001kronecker}. In recent years, however, several works have introduced general upper bounds on Kronecker coefficients, as functions of various parameters of the partitions. For example, Pak and Panova~\cite{pak2020bounds} bounded $g(\lambda, \mu, \nu)$ as a function of the lengths (i.e., the number of rows) of the partitions. In a subsequent work, they also bounded $g(\lambda, \mu, \nu)$ as a function of the sizes of the corresponding Durfee squares~\cite{pak2023durfee} (see therein for details).

We are not aware of any explicit general upper bound that depends only on the levels of the partitions. However, the following bound may be derived as a consequence of a work of Briand, Orellana, and Rosa~\cite{briand2011stability} on convergence rates of stable Kronecker coefficients:
\begin{thm}[Corollary of \cite{briand2011stability}] \label{thm:known kronecker bound}
Let $d_1\le d_2$ be positive integers. Let $\lambda,\mu \vdash n$ be partitions of levels $d_1$ and $d_2$ respectively, and let $\nu \vdash n$ be some partition. Then
\[
    g(\lambda, \mu, \nu) \le 4^{d_1} d_{2}^{d_{1}}d_{1}^{-d_{1}/2}.
\]
\end{thm}


Indeed, the stability result~\cite[Theorem 1.2]{briand2011stability} implies that for $n>4d_2$, removing $n-4d_2$ cells from the first parts of the partitions $\lambda, \mu$ and $\nu$ does not affect the corresponding Kronecker coefficient. Thus, we may assume $n\le 4d_2$. Combining the trivial bound $g(\lambda, \mu, \nu) \le \chi_{\lambda}(1)$ with the dimension estimate in Corollary~\ref{cor: dimension of characters n over sqrt d} under this assumption yields Theorem~\ref{thm:known kronecker bound}. (Slightly sharper bounds may be obtained from~\cite{briand2011stability}.)

\subsubsection*{Our results}
We present three upper bounds on the Kronecker coefficients in terms of the levels of the partitions, all of which follow immediately from Theorem~\ref{thm:main}. These bounds are slightly weaker than Theorem~\ref{thm:known kronecker bound}. However, our proof is considerably more elementary: the only representation-theoretic ingredient we use is Young's branching rule.

First, we may apply the upper bound 
\[ \mathbb{E}[fgh]\le \|f\|_3\|g\|_3\|h\|_3\]
implied by the generalized H\"older's inequality to obtain the following immediate corollary of Theorem~\ref{thm:main}. 
 
\begin{cor}\label{cor: Kronecker coefficients}
There exist absolute constants $C,\bar{C}>0,$ such that the following holds. Let $d_1\le d_2\le d_3$ be positive integers. Let $\lambda, \mu$ and $\nu$ be partitions of levels $d_1,d_2$ and $d_3$ respectively.
Then the Kronecker coefficients satisfy
\[
g(\lambda, \mu, \nu) \le \left(\frac{C^{d_3}d_1^{d_1}d_2^{d_2}d_3^{d_3}}{n^{d_1+d_2+d_3}}\chi_{\lambda}(1)\chi_{\mu}(1)\chi_{\nu}(1)\right)^{1/3} \le \bar{C}^{d_{3}}d_{1}^{d_{1}/6}d_{2}^{d_{2}/6}d_{3}^{d_{3}/6}.
\]
\end{cor}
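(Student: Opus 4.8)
The claim is an immediate corollary of Theorem~\ref{thm:main} combined with the generalized H\"older inequality, so the plan is short. The starting point is the character formula
\[
g(\lambda,\mu,\nu) = \mathbb{E}_{\sigma\sim S_n}[\chi_\lambda(\sigma)\chi_\mu(\sigma)\chi_\nu(\sigma)],
\]
which is nonnegative since it counts the multiplicity of $V_\nu$ in $V_\lambda\otimes V_\mu$ (equivalently the multiplicity of the trivial module in $V_\lambda\otimes V_\mu\otimes V_\nu$). I would first apply the generalized H\"older inequality with exponents $3,3,3$ to obtain
\[
g(\lambda,\mu,\nu) \le \mathbb{E}[|\chi_\lambda\chi_\mu\chi_\nu|] \le \|\chi_\lambda\|_3\,\|\chi_\mu\|_3\,\|\chi_\nu\|_3.
\]

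Next I would invoke Theorem~\ref{thm:main} with $q=3$ for each of the three characters. Since $q=3\ge 2$, the theorem applies and gives, for a character of level $d$,
\[
\|\chi\|_3 \le \left(\frac{3C'}{\log 3}\right)^{d}\left(\frac{d^d\chi(1)}{n^d}\right)^{1-2/3} = \left(\frac{3C'}{\log 3}\right)^{d}\left(\frac{d^d\chi(1)}{n^d}\right)^{1/3},
\]
where $C'$ is the absolute constant from Theorem~\ref{thm:main}. Writing $C := \max\{(3C'/\log 3)^3,\,1\}$, the factor $(3C'/\log 3)^{d}$ is at most $C^{d/3}$, so for a level-$d$ character
\[
\|\chi\|_3 \le \left(\frac{C^d d^d \chi(1)}{n^d}\right)^{1/3}.
\]
Applying this to $\chi_\lambda,\chi_\mu,\chi_\nu$ with levels $d_1,d_2,d_3$ and multiplying the three bounds yields exactly
\[
g(\lambda,\mu,\nu) \le \left(\frac{C^{d_1+d_2+d_3} d_1^{d_1} d_2^{d_2} d_3^{d_3}}{n^{d_1+d_2+d_3}}\right)^{1/3}\chi_\lambda^{1/3}(1)\,\chi_\mu^{1/3}(1)\,\chi_\nu^{1/3}(1),
\]
which is the stated bound.

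\textbf{Main obstacle.} Honestly, there is no substantial obstacle once Theorem~\ref{thm:main} is in hand: this corollary is purely a bookkeeping exercise, combining H\"older with the norm bound and absorbing the $q$-dependent constant $(Cq/\log q)^d$ at the fixed value $q=3$ into a single absolute constant raised to the appropriate power. The only points requiring (minimal) care are (i) checking that $g(\lambda,\mu,\nu)$ is genuinely nonnegative so that replacing it by $\mathbb{E}[|\chi_\lambda\chi_\mu\chi_\nu|]$ is legitimate — this is standard, as Kronecker coefficients are multiplicities — and (ii) tracking the constant so that the same $C$ works uniformly in $d_1,d_2,d_3$ and $n$, which is immediate since $3/\log 3$ is a fixed number. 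All the analytic content — globalness of characters (Proposition~\ref{prop:characters are global}) and hypercontractivity for global functions — is already packaged inside Theorem~\ref{thm:main}.
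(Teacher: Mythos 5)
Your proposal is correct and matches the paper's proof exactly: the paper also derives this corollary by applying the generalized H\"older inequality $\mathbb{E}[\chi_\lambda\chi_\mu\chi_\nu]\le\|\chi_\lambda\|_3\|\chi_\mu\|_3\|\chi_\nu\|_3$ and then invoking Theorem~\ref{thm:main} with $q=3$. Your constant bookkeeping is also right, so there is nothing to add.
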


Utilizing the generalized H\"{o}lder inequality $\mathbb{E}[fgh]\le \|f\|_4\|g\|_4\|h\|_2$ we obtain the following.

\begin{cor}\label{cor: Assymetric Kronecker coefficients 1}
There exist absolute constants $C,\bar{C}>0,$ such that the following holds. Let $d_1\le d_2$ be positive integers. Let $\lambda$ and $\mu $ be partitions of levels $d_1$ and $d_2$ respectively, and let $\nu$ be a partition of an arbitrary level.
Then
\[
g(\lambda, \mu, \nu) \le \left(\frac{C^{d_2}d_1^{d_1}d_2^{d_2}}{n^{d_1+d_2}}\chi_{\lambda}(1)\chi_{\mu}(1)\right)^{1/2} \le \bar{C}^{d_{2}}d_{1}^{d_{1}/4}d_{2}^{d_{2}/4}.
\]
\end{cor}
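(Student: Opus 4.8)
The plan is to mimic the proof of Corollary~\ref{cor: Kronecker coefficients}, but with an asymmetric choice of exponents in the generalized H\"older inequality that exploits the fact that $\nu$ is allowed to have arbitrary level. Since all irreducible representations of $S_n$ are realizable over $\mathbb{R}$, the product $\chi_\lambda\chi_\mu\chi_\nu$ is a real-valued class function and the Kronecker coefficient equals $g(\lambda,\mu,\nu) = \mathbb{E}[\chi_\lambda\chi_\mu\chi_\nu]$, which is moreover nonnegative. As $\tfrac14 + \tfrac14 + \tfrac12 = 1$, the generalized H\"older inequality gives
\[
g(\lambda,\mu,\nu) \le \mathbb{E}\big[|\chi_\lambda\chi_\mu\chi_\nu|\big] \le \|\chi_\lambda\|_4\,\|\chi_\mu\|_4\,\|\chi_\nu\|_2.
\]

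Next I would observe that, because the irreducible characters of $S_n$ form an orthonormal basis for the space of class functions, $\|\chi_\nu\|_2 = 1$ regardless of the level of $\nu$; this is exactly where the asymmetry pays off, as we pay nothing for the $\nu$-factor. It then remains to bound $\|\chi_\lambda\|_4$ and $\|\chi_\mu\|_4$ by invoking Theorem~\ref{thm:main} with $q = 4$ (note $4 \ge 2$). For $\chi_\lambda$ of level $d_1$ this yields
\[
\|\chi_\lambda\|_4 \le \left(\frac{4C_0}{\log 4}\right)^{d_1}\left(\frac{d_1^{d_1}\chi_\lambda(1)}{n^{d_1}}\right)^{1-2/4} = \left(\frac{4C_0}{\log 4}\right)^{d_1}\left(\frac{d_1^{d_1}\chi_\lambda(1)}{n^{d_1}}\right)^{1/2},
\]
where $C_0$ is the absolute constant of Theorem~\ref{thm:main}, and similarly for $\chi_\mu$ with $d_1$ replaced by $d_2$.

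Finally I would multiply the three bounds and collect terms: the product of the two exponential prefactors is $\left(4C_0/\log 4\right)^{d_1+d_2}$, which I rewrite as $\bigl(C^{d_1+d_2}\bigr)^{1/2}$ by setting $C := \left(4C_0/\log 4\right)^2$, while the product of the remaining square roots is precisely $\bigl(d_1^{d_1}d_2^{d_2}n^{-(d_1+d_2)}\bigr)^{1/2}\chi_\lambda^{1/2}(1)\chi_\mu^{1/2}(1)$. Combining these gives the stated inequality.

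There is essentially no obstacle here beyond bookkeeping: the only points needing a moment's care are that $g(\lambda,\mu,\nu)\ge 0$ (so no absolute value is lost on the left-hand side), that $\|\chi_\nu\|_2 = 1$ holds for \emph{every} $\nu$, and that the implicit constant can be absorbed cleanly since $\log 4$ is a fixed positive number. One could just as well produce a whole family of such estimates by replacing the triple $(4,4,2)$ with any H\"older triple $(q_1,q_2,q_3)$ with $\sum 1/q_i = 1$ and feeding each $q_i \ge 2$ into Theorem~\ref{thm:main}; the choice $(4,4,2)$ is the natural one when $\nu$ is unconstrained, because $q_3 = 2$ makes the $\nu$-factor disappear.
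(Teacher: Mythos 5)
Your proposal is correct and follows exactly the paper's argument: the generalized H\"older inequality in the form $\mathbb{E}[\chi_\lambda\chi_\mu\chi_\nu]\le\|\chi_\lambda\|_4\|\chi_\mu\|_4\|\chi_\nu\|_2$, the orthonormality fact $\|\chi_\nu\|_2=1$, and Theorem~\ref{thm:main} with $q=4$ applied to the two low-level factors. The constant bookkeeping is also handled correctly.
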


Finally, we can be slightly trickier and obtain the following bound when only $\lambda$ is assumed to have a low level. 

\begin{cor}\label{cor: Assymetric Kronecker coefficients 2}
There exists an absolute constant $C>0,$ such that the following holds. Let $d$ be an integer, let $\lambda$ be a partition of level $d$, let $\mu, \nu$ be partitions, and let $q=\frac{\log(\chi_\mu(1)\chi_{\nu}(1))}{d}.$ 
If $q \ge 2$, then
\[
g(\lambda, \mu, \nu) \le \left( \frac{Cq}{\log q}\right)^d\left(\frac{\chi_{\lambda}(1)d^d}{n^d}\right)^{1-2/q}. 
\]
\end{cor}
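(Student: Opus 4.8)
The plan is to bound $g(\lambda,\mu,\nu) = \langle \chi_\lambda \chi_\mu, \chi_\nu \rangle = \mathbb{E}[\chi_\lambda \chi_\mu \chi_\nu]$ by applying H\"older's inequality with exponents tailored so that the cheap factor $\chi_\lambda$ (the one we know is global/low-level) gets the large exponent $q$, while the two expensive factors $\chi_\mu, \chi_\nu$ get only an $L^2$-type treatment where we can afford to use the trivial bound $\|\chi_\mu\|_2 = \|\chi_\nu\|_2 = 1$. Concretely, I would use the generalized H\"older inequality
\[
\mathbb{E}[\chi_\lambda \chi_\mu \chi_\nu] \le \|\chi_\lambda\|_q \cdot \|\chi_\mu\|_{p} \cdot \|\chi_\nu\|_{p}
\]
with $\tfrac1q + \tfrac2p = 1$, i.e. $p = \tfrac{2q}{q-1}$. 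Since $q \ge 2$ we have $p \le 4$, but more to the point $p$ is some finite exponent bigger than $2$, so $\|\chi_\mu\|_p$ and $\|\chi_\nu\|_p$ are genuinely larger than their $2$-norms and cannot simply be set to $1$. This is the first place where a little care is needed: I do not want to invoke Theorem \ref{thm:main} on $\mu$ and $\nu$, because their levels are unconstrained.

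The trick (the ``slightly trickier'' step advertised in the statement) is to instead split as
\[
\mathbb{E}[\chi_\lambda \chi_\mu \chi_\nu] \le \|\chi_\lambda\|_q \cdot \|\chi_\mu \chi_\nu\|_{q'}, \qquad \tfrac1q + \tfrac1{q'} = 1,
\]
and then bound $\|\chi_\mu \chi_\nu\|_{q'}$ crudely using $|\chi_\mu(\sigma)| \le \chi_\mu(1)$ and $|\chi_\nu(\sigma)| \le \chi_\nu(1)$ on a ``bad'' part of the group together with the $L^2$ orthonormality $\|\chi_\mu\|_2 = \|\chi_\nu\|_2 = 1$ on the rest — but actually the cleanest route is simply the pointwise bound $|\chi_\mu(\sigma)\chi_\nu(\sigma)| \le \chi_\mu(1)\chi_\nu(1)$ combined with $\langle |\chi_\mu\chi_\nu|, 1\rangle \le \|\chi_\mu\|_2 \|\chi_\nu\|_2 = 1$, which interpolates to $\|\chi_\mu\chi_\nu\|_{q'} \le (\chi_\mu(1)\chi_\nu(1))^{1 - 1/q'} = (\chi_\mu(1)\chi_\nu(1))^{1/q}$. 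Now one plugs in $q = \tfrac{\log(\chi_\mu(1)\chi_\nu(1))}{d}$, so that $(\chi_\mu(1)\chi_\nu(1))^{1/q} = e^{d} = \exp(d)$, a bounded-per-level factor that can be absorbed into the constant $C^d$. For the remaining factor $\|\chi_\lambda\|_q$, apply Theorem \ref{thm:main} directly (this is legitimate since $q \ge 2$ is exactly the hypothesis assumed), obtaining $\|\chi_\lambda\|_q \le \bigl(\tfrac{Cq}{\log q}\bigr)^d \bigl(\tfrac{d^d\chi_\lambda(1)}{n^d}\bigr)^{1-2/q}$.

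Combining the two bounds yields
\[
g(\lambda,\mu,\nu) \le \Bigl(\frac{Cq}{\log q}\Bigr)^d \Bigl(\frac{d^d\chi_\lambda(1)}{n^d}\Bigr)^{1-2/q} \cdot e^{d},
\]
and absorbing $e^d$ into the constant (replacing $C$ by $Ce$, still absolute) gives exactly the claimed inequality. The main obstacle, such as it is, is not any deep estimate — Theorem \ref{thm:main} does all the heavy lifting — but rather choosing the right way to dispense with $\chi_\mu, \chi_\nu$: one must resist the temptation to treat all three characters symmetrically (which would force a level hypothesis on $\mu$ and $\nu$) and instead interpolate the product $\chi_\mu\chi_\nu$ between its trivial sup bound $\chi_\mu(1)\chi_\nu(1)$ and its $L^1$ bound $1$, then calibrate $q$ precisely so the resulting $(\chi_\mu(1)\chi_\nu(1))^{1/q}$ factor collapses to a constant to the power $d$. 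A minor bookkeeping point is to double-check the exponent arithmetic $1 - 1/q' = 1/q$ and that $q' \ge 1$ is automatic from $q \ge 2$, so H\"older applies.
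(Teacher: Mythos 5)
Your proof is correct and follows essentially the same strategy as the paper's: isolate $\chi_\lambda$ in an $L^q$-norm via H\"older, control the remaining characters using only the normalization $\|\chi_\mu\|_2=\|\chi_\nu\|_2=1$ together with the sup bounds $\chi_\mu(1),\chi_\nu(1)$, and calibrate $q$ so the resulting factor $(\chi_\mu(1)\chi_\nu(1))^{1/q}=e^d$ is absorbed into $C^d$. The only (cosmetic) difference is that the paper uses the three-factor generalized H\"older inequality $\mathbb{E}[\chi_\lambda\chi_\mu\chi_\nu]\le\|\chi_\lambda\|_q\|\chi_\mu\|_{2q'}\|\chi_\nu\|_{2q'}$ and interpolates each of $\chi_\mu,\chi_\nu$ between $L^2$ and $L^\infty$ separately, whereas you interpolate the product $\chi_\mu\chi_\nu$ between $L^1$ (via Cauchy--Schwarz) and $L^\infty$; the two computations yield the identical factor $e^d$.
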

 
\remove{ 
Using the fact $\sum_{\nu \vdash n} g(\lambda,\lambda,\nu)^2 = \|\chi_\lambda\|_4^4$, we get a similar bound for Kronecker coefficients of the form $g(\lambda,\lambda,\nu)$.
\begin{cor}\label{cor: another bound for Kronecker coefficients}
Let $d\le n/10$ Let $\lambda \vdash n$ be a partition of level $d$.
Then there exists an absolute constant $C>0,$ such that for all $\nu \vdash n$
\[
g(\lambda, \lambda, \nu) \le \left(\frac{Cd}{n}\right)^{d}\chi_{\lambda}(1). 
\]
\end{cor}

This bound is slightly better in the case where $\chi_\mu(1) >> \chi_\lambda(1)$. We can also use the formula $\sum_{\nu \vdash n} g(\lambda,\lambda,\nu)^2 = \|\chi_\lambda\|_4^4$ to lower bound $\max_{\nu \vdash n} g(\lambda, \lambda, \nu)$ by $\left(\frac{Cd}{n \log d}\right)^{d}\chi_{\lambda}(1)$ divided by $\#\{\nu \vdash n \mid \nu_1 \ge n-2d\} \approx \#\{\nu \vdash d\} \approx e^{\tilde{c}\sqrt{d}}$.  check if this is better than naive bounds. Also check if we should use $\sum_{\nu \vdash n} g(\lambda,\mu,\nu)^2 = \|\chi_\lambda\chi_\mu\|_2^2$ or something like this to improve bounds.
}

\subsection{Upper bounds on Fourier coefficients}
In the Fourier analysis of finite abelian groups, a standard measure of pseudorandomness for a set $A$ is the smallness of the Fourier coefficients of the indicator function $1_A$. When these coefficients are small, the behavior of the set $A$ resembles that of a random set in numerous aspects.
In the nonabelian setting, a similar phenomenon arises with conjugacy classes: the pseudorandomness of the conjugacy class $\sigma^{S_n}$ corresponds to the smallness of the Fourier coefficients of $1_{\sigma^{S_n}}$. 
It turns out that all the large conjugacy classes of $A_n$ have small Fourier coefficients, leading to desirable pseudorandomness properties. It follows from the following bound on the Fourier coefficients of class functions $f$ with a small $\frac{\|f\|_2}{\|f\|_1}$.

\begin{thm}\label{thm:Upper bounds on Fourier coefficients}
There exists an absolute constant $C>0$, such that the following holds. Let $f$ be a class function with $\frac{\|f\|_2}{\|f\|_1} \le M$ for some bound $M$, and let $\chi$ be a character of $S_n$ of level $d \le  \log(M) $. 
Then 
\[
|\langle f , \chi \rangle| \le  \chi(1)\|f\|_1\left(\frac{C\log M}{n \log \left(\frac{\log M}{d}\right)} \right)^d. 
\]
\end{thm}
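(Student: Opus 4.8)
The plan is to combine the $L^q$-norm bound for characters (Theorem \ref{thm:main}) with a Hölder/duality argument, and then optimize the free parameter $q$. Since $\chi$ is a class function (after all, $f$ is, and so is $\chi$), Hölder's inequality gives, for any $q \ge 2$ with conjugate exponent pair chosen appropriately,
\[
|\langle f,\chi\rangle| = \left|\,\mathbb{E}_{\sigma\sim S_n}[f(\sigma)\chi(\sigma)]\,\right| \le \|f\|_{q'}\,\|\chi\|_q,
\]
where $\frac1q + \frac1{q'} = 1$. I would then interpolate $\|f\|_{q'}$ between $\|f\|_1$ and $\|f\|_2$: writing $\frac{1}{q'} = \frac{1-\theta}{1} + \frac{\theta}{2}$ with $\theta = 2/q$ (this uses $q'\le 2$, i.e. $q\ge 2$), log-convexity of $L^p$-norms yields $\|f\|_{q'} \le \|f\|_1^{1-2/q}\|f\|_2^{2/q} \le \|f\|_1 M^{2/q}$, using the hypothesis $\|f\|_2 \le M\|f\|_1$. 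Plugging in Theorem \ref{thm:main}, which says $\|\chi\|_q \le \left(\frac{Cq}{\log q}\right)^d\left(\frac{d^d\chi(1)}{n^d}\right)^{1-2/q}$, and collecting the $1-2/q$ powers, we get
\[
|\langle f,\chi\rangle| \le \|f\|_1 M^{2/q}\left(\frac{Cq}{\log q}\right)^d\left(\frac{d^d\chi(1)}{n^d}\right)^{1-2/q}.
\]

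Now I would choose $q$ to balance the $M^{2/q}$ factor (decreasing in $q$) against the $\left(\frac{Cq}{\log q}\right)^d$ factor (increasing in $q$). The natural choice is $q = \frac{2\log M}{d}$, so that $M^{2/q} = e^{d}$, a harmless constant raised to the $d$; the hypothesis $d \le \log M$ guarantees $q \ge 2$ as required. With this choice $\frac{Cq}{\log q} = \frac{2C\log M}{d\,\log\!\left(\frac{2\log M}{d}\right)}$, and since $\log M \ge d$ we have $\log\!\left(\frac{2\log M}{d}\right) \ge \log\!\left(\frac{\log M}{d}\right)$ up to the additive $\log 2$, which can be absorbed into $C$ after checking that $\log(\log M/d)$ is bounded below by a positive constant on the relevant range (this needs a small case analysis: if $\log M/d$ is close to $1$ the bound $\left(\frac{Cq}{\log q}\right)^d$ is of order $(C\log M / d)^d$ with a possibly small denominator — but then one simply uses the cruder estimate $\frac{q}{\log q}\le q$ and the fact that $\chi(1)\le$ something, or absorbs it, since the claimed bound also degrades). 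Finally, $\left(\frac{d^d\chi(1)}{n^d}\right)^{1-2/q} \le \frac{d^d\chi(1)}{n^d}\cdot\left(\frac{n^d}{d^d\chi(1)}\right)^{2/q}$, and here I would bound the correction factor $\left(\frac{n}{d}\right)^{2d/q} \le e^{d}$ again using $q \ge 2\log(n/d)\cdot\frac{\log M}{\log M}$... more carefully: since $\chi(1)\ge 1$ and $n/d \le n \le$ (something like $e^{\log M}$ is not automatic), I should instead keep $\chi(1)^{1-2/q}\le \chi(1)$ and note $(d^d/n^d)^{1-2/q}(d/n)^{2d/q}$... the cleanest route is to verify $(n/d)^{2/q}=O(1)$, which holds as long as $q \gtrsim \log(n/d)$; combining with $q = 2\log M/d$ this needs $\log M \gtrsim d\log n$, which is \emph{not} assumed. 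So instead I would simply write $\left(\frac{d^d\chi(1)}{n^d}\right)^{1-2/q} \le \frac{d^d\chi(1)}{n^d}$ whenever $d^d\chi(1)\le n^d$, and handle the opposite case separately (there the target bound is weak and follows from $|\langle f,\chi\rangle|\le\|f\|_1\|\chi\|_\infty$ combined with crude estimates). Assembling the pieces gives the claimed inequality with a new absolute constant $C$.

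The main obstacle I anticipate is not the Hölder/interpolation skeleton, which is routine, but the bookkeeping at the boundary of the parameter range: ensuring that $\log\!\left(\frac{\log M}{d}\right)$ in the denominator is legitimately positive and bounded away from $0$ (so that it may appear in the denominator at all), and controlling the stray factors $\left(\frac{n}{d}\right)^{\pm 2d/q}$ and $M^{2/q}$ so that all of them collapse into an absolute constant to the power $d$. I expect the cleanest presentation chooses $q$ proportional to $\log M / d$, verifies $q\ge 2$ from $d\le\log M$, uses $e^{O(d)}$ absorption for the multiplicative slack, and — if necessary — treats the regime $\log M/d = O(1)$ (equivalently $q = O(1)$) by a direct and cruder argument, since there the stated bound is essentially $\left(\frac{C\log M}{n}\right)^d\chi(1)\|f\|_1$ with no logarithmic gain to extract.
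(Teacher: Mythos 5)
Your skeleton is exactly the paper's proof: H\"{o}lder with a free exponent $q\ge 2$, log-convexity to interpolate $\|f\|_{q'}\le \|f\|_1 M^{2/q}$, Theorem \ref{thm:main} for $\|\chi\|_q$, and the choice $q=2\log M/d$ (equivalently $\theta=d/\log M$), with $d\le\log M$ guaranteeing $q\ge 2$ and $M^{2/q}=e^d$ absorbed into $C^d$. Your worry about the denominator $\log(\log M/d)$ near the boundary $d=\log M$ is moot: the argument actually produces $\log q=\log(2\log M/d)\ge\log 2$ in the denominator, which is stronger than (hence implies) the stated bound, and the stated bound is vacuous when $\log M=d$.

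However, your handling of the final absorption step contains a genuine error. You need $\left(\frac{d^d\chi(1)}{n^d}\right)^{1-2/q}\le C^d\,\frac{d^d\chi(1)}{n^d}$, and you propose to get it from ``$x^{1-2/q}\le x$ whenever $x=\frac{d^d\chi(1)}{n^d}\le 1$.'' That inequality goes the wrong way: for $0<x\le 1$ and exponent $1-2/q\in[0,1)$ one has $x^{1-2/q}\ge x$, so precisely in the case you single out the step fails. Your earlier attempt, bounding the correction factor by $(n/d)^{2d/q}$ using only $\chi(1)\ge 1$, is (as you noticed) too lossy. The missing ingredient is the dimension lower bound for level-$d$ characters: $\chi(1)\ge\binom{n-d}{d}\ge\left(\frac{n-d}{d}\right)^d$ (Lemma \ref{lem: dimension of characters}), which gives
\[
\left(\frac{n^d}{d^d\chi(1)}\right)^{2/q}\le\left(\frac{n}{n-d}\right)^{2d/q}\le 3^{d}
\]
for, say, $d\le 2n/3$ (larger $d$ being handled by trivial bounds), and $3^d$ is absorbed into $C^d$. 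This is exactly the absorption the paper performs (implicitly here, and spelled out at the end of the proof of Theorem \ref{thm:matching lower bound for main}). With that substitution your argument closes; without it, the case split you propose does not.
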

This result extends to $A_n$, see Theorem~\ref{thm:Upper bounds on Fourier coefficients An} below.
The proofs for our bounds on character ratios (presented in Theorems \ref{thm: few cycles} and \ref{thm: actual upper bound on character ratio}) are based on Theorem \ref{thm:Upper bounds on Fourier coefficients}, by considering the class function $1_{\sigma^{S_n}}$ in order to bound $\chi(\sigma)$.

\remove{ I don't know how to fix it. but I think it's wrong: First, $\frac{n^{-c\sqrt{n}}}{n!}$ is really small (less than one element), should it be $n^{-c\sqrt{n}}$? Second, should we assume that $f$ is a class function? Otherwise, $f$ might be $x_{1 \to 1}$, the probability of returning to $0$ will be small but we will never be mixed. Thanks! Fixed.
}
\subsection{Product mixing}

Another application of our result concerns product mixing. We say that a group $G$ is an $\epsilon$-\emph{mixer} if for all sets $A,B$ and $C$ of density $\ge \epsilon$ (see Definition~\ref{def:density}) and independently chosen $a\sim A,b\sim B$ the probability that $ab\in C$ is within a factor of $1.01$ of $\mu(C) = \frac{|C|}{|G|}$. Gowers \cite{gowers2008quasirandom} showed that there exists an absolute constant $C$, such that $A_n$ is a $Cn^{-1/3}$-mixer. We say that $A_n$ is \emph{normally an $\epsilon$-mixer} if the same holds for all normal (i.e., conjugacy-closed) sets $A,B,C$.
We show that for normal sets Gowers' result can be improved exponentially.
\begin{thm}\label{thm:normally mixing}
There exists an absolute constant $c>0$, such that $A_n$ is normally an $n^{-cn^{1/3}}$-mixer.
\end{thm}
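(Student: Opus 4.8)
\textbf{Proof plan for Theorem \ref{thm:normally mixing}.}

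The plan is to reduce product mixing of normal sets to a bound on character ratios via the standard nonabelian Fourier-analytic argument, and then feed in our new character bounds. Let $A, B, C \subseteq A_n$ be normal sets of density $\ge n^{-cn^{1/3}}$ for a small constant $c$ to be chosen. Write $f = \frac{1_A}{|A|/|A_n|}$, $g = \frac{1_B}{|B|/|A_n|}$, so that $\|f\|_1 = \|g\|_1 = 1$, and observe that the quantity we must control, $\Pr_{a\sim A, b\sim B}[ab \in C] \cdot \frac{|A_n|}{|C|}$, equals $\langle f * g, \frac{1_C}{|C|/|A_n|}\rangle$. Expanding $f*g$ in the orthonormal basis of irreducible characters of $A_n$ (or, to avoid the sign subtleties of $A_n$, working first with class functions on $S_n$ and transferring via Theorem~\ref{thm:Upper bounds on Fourier coefficients An}), the trivial character contributes exactly $1$, and it suffices to show that the contribution of all nontrivial characters to $\langle f*g-1, 1_C/(|C|/|A_n|)\rangle$ is at most $0.01$ in absolute value. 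Using $\widehat{f*g}(\chi) = \hat f(\chi)\hat g(\chi)/\chi(1)$ for the characters of $S_n$ and Cauchy--Schwarz, this reduces to bounding $\sum_{\chi \ne 1} \frac{|\hat f(\chi)|\,|\hat g(\chi)|\,|\hat{1_C}(\chi)|}{\chi(1)\,|C|/|A_n|}$.

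The next step is to estimate each Fourier coefficient using Theorem~\ref{thm:Upper bounds on Fourier coefficients}. For $f = 1_A / (|A|/|A_n|)$ we have $\|f\|_1 = 1$ and $\|f\|_2^2 = |A_n|/|A| \le n^{cn^{1/3}}$, so $M := \|f\|_2/\|f\|_1 \le n^{c n^{1/3}/2}$, hence $\log M \le \tfrac{c}{2} n^{1/3}\log n$; the same holds for $g$ and for $1_C/(|C|/|A_n|)$. Theorem~\ref{thm:Upper bounds on Fourier coefficients} then gives, for every character $\chi$ of level $d \le \log M$,
\[
|\hat f(\chi)| \le \chi(1)\left(\frac{C\log M}{n\log(\log M / d)}\right)^{d},
\]
and likewise for $g$ and $1_C$. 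Plugging these three bounds in, the summand for a character of level $d$ is at most $\chi(1)^{2}\big(\tfrac{C\log M}{n\log(\log M/d)}\big)^{3d}$. Since $\log M \le \tfrac c2 n^{1/3}\log n$ and, crucially, $d \ge 1$ forces $\tfrac{C\log M}{n} \le \tfrac{Cc\log n}{2 n^{2/3}} \to 0$, each such summand is tiny; the one subtlety is summing over all characters of a given level $d$, of which there are at most the number of partitions of $d$, i.e.\ $e^{O(\sqrt d)}$, times a factor accounting for $\chi(1)$ — but $\chi(1)^2 \le n^{2d}$ crudely (better: $\chi(1)\le \binom{n}{d}d! \le n^{d}$ for level $d$), so the level-$d$ total is at most $e^{O(\sqrt d)} n^{2d}\big(\tfrac{C\log M}{n\log(\log M/d)}\big)^{3d} \le \big(\tfrac{C'\log M}{\log(\log M/d)}\big)^{3d} n^{-d}$, which after choosing $c$ small enough is bounded by $2^{-d}\cdot(\text{something} \to 0)$; summing the geometric series over $1\le d\le \log M$ yields a total $o(1)$, in particular below $0.01$ for $n$ large, while small $n$ are handled by taking $c$ small (or absorbing them into the constant). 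One must also check that characters of level $d > \log M$ contribute nothing new: this is exactly the regime Theorem~\ref{thm:Upper bounds on Fourier coefficients} does not cover directly, but here one uses instead the trivial bound $|\hat f(\chi)| \le \|f\|_2\|\chi\|_2 = \|f\|_2 \le M$ combined with $|\hat{1_C}(\chi)| \le \chi(1)$ and the fact (Parseval) that $\sum_\chi |\hat g(\chi)|^2 = \|g\|_2^2 \le M^2$; a short Cauchy--Schwarz argument shows the tail sum over high-level $\chi$ is at most $M^{3}\cdot(\text{dimension weighting})$, which is again $o(1)$ after the choice of $c$ because the high levels are exponentially suppressed by $\chi(1)$ in the denominator relative to the density.

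The main obstacle I anticipate is organizing the sum over \emph{all} nontrivial characters — not just the low-level ones — into two regimes and ensuring the high-level tail is genuinely negligible. Theorem~\ref{thm:Upper bounds on Fourier coefficients} is only stated for $d \le \log M$, so the clean exponential-in-$d$ decay is available only there; for $d > \log M$ one has to argue more crudely using Parseval and the fact that $|\hat{1_C}(\chi)|/(\chi(1)\,|C|/|A_n|) \le |A_n|/|C| = M_C^2$ but this is counterbalanced by $\sum_{\chi}|\hat f(\chi)\hat g(\chi)|/\chi(1)$ being controlled by $\|f*g\|_2 \le \|f\|_2\|g\|_2$ which is only $M^2$ — so one needs the density to be large enough ($n^{-cn^{1/3}}$ with $c$ small) precisely so that $M^{O(1)} = n^{O(cn^{1/3})}$ is dominated by the $n^{-d}$-type savings coming from $1/\chi(1)$ summed against the low-level mass; making this two-regime bookkeeping rigorous, with the constant $c$ chosen once at the end to beat all the $O(1)$'s simultaneously, is where the real care is needed, though no single step is deep. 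A cleaner alternative, which I would pursue if the tail estimate gets messy, is to first establish an $L^2$ product-mixing statement (à la Theorem~\ref{thm:mixing time results} with $\ell = 2$) showing $\|f * g - 1\|_2 < \epsilon$ whenever $\|f\|_2, \|g\|_2 \le n^{cn^{1/3}}$, and then deduce the $L^\infty$-type mixer property for the third set $C$ by Cauchy--Schwarz against $1_C/(|C|/|A_n|)$, whose $2$-norm is also at most $n^{cn^{1/3}/2}$ — this routes everything through the already-packaged machinery and isolates the one genuinely new input (Theorem~\ref{thm:main}) cleanly.
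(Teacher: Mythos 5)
Your overall route is the paper's: expand $\langle f*g,h\rangle-1=\sum_{\chi\neq 1}\hat f(\chi)\hat g(\chi)\hat h(\chi)/\chi(1)$ and sum over characters level by level, feeding in the Fourier-coefficient bound of Theorem~\ref{thm:Upper bounds on Fourier coefficients} and the $e^{O(\sqrt d)}$ count of level-$d$ characters. (The paper packages the per-character input as Theorem~\ref{thm: A_n Useful for mixing times} with $\alpha=1/3$, which gives $|\hat f(\chi)|\le 2\max(\epsilon^d,2^{-n^{3/5}})\chi(1)^{1/3}$, so the dimension cancels exactly in the triple product and only $\epsilon^{3d}$ remains.) The genuine gap is in your bookkeeping for levels $d$ near $\log M$. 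You bound the level-$d$ contribution by $e^{O(\sqrt d)}\,n^{2d}\left(\frac{C\log M}{n\log(\log M/d)}\right)^{3d}$ using only $\chi(1)\le n^d$ (your parenthetical $\binom{n}{d}d!\le n^d$ is the same crude bound, not a better one) and claim this is $\le 2^{-d}\cdot o(1)$ for small $c$. With $\log M\asymp cn^{1/3}\log n$ the expression equals $e^{O(\sqrt d)}\left(\frac{C'c\log n}{\log(\log M/d)}\right)^{3d}$, which is small only when $\log(\log M/d)\gtrsim \log n$, i.e.\ $d\le \log M\cdot n^{-\Omega(c)}$. For $d$ between roughly $\log^{0.9}M$ and $\log M$ the denominator $\log(\log M/d)$ is $O(1)$ and tends to $0$ as $d\to\log M$, and no choice of $c$ beats the resulting $(\log n)^{3d}$. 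This is exactly why the paper splits into three regimes (Lemma~\ref{cor: Dividing our upper bound into regimes}): in the middle regime (Lemma~\ref{lem:medium degee}) one must forgo the $\log\log$ saving, use the bound $\left(\frac{C\log M}{n}\right)^d$, and compensate with the sharper dimension bound $\chi(1)\le n^d/\sqrt{d!}$ of Corollary~\ref{cor: dimension of characters n over sqrt d}; since there $d\ge\log^{0.9}M\gtrsim n^{0.29}$, the factor $d^{-d(1-\alpha)/2}$ overwhelms $(\log n)^{3d}$. Your plan never invokes anything beyond $\chi(1)\le n^d$, so this regime is unproved as written.

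Two smaller points. Your high-level tail ($d>\log M$) is fine in spirit and matches the paper's Lemma~\ref{lem: largest degree}: one plays $|\hat f(\chi)|\le\|f\|_2\le M$ against $\chi(1)\ge(n/ed)^d$ (Theorem~\ref{thm:EFP dimensions of representations}), and the $e^{O(\sqrt n)}$ total character count is harmless. But your proposed ``cleaner alternative'' does not work: Theorem~\ref{thm:mixing time results} with $\ell=2$ only yields $\|f*g-1\|_2<\epsilon$, and Cauchy--Schwarz against $h=1_C/\mu(C)$ costs a factor $\|h\|_2$, which can be as large as $n^{cn^{1/3}/2}$; you would need $\|f*g-1\|_2\le \epsilon\,n^{-cn^{1/3}/2}$, which that theorem does not supply. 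The direct triple-product computation, with the middle regime repaired as above, is the correct path.
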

We also demonstrate that our result is best possible, in the sense that there exists an absolute constant $C$ such that $A_n$ is not normally an $n^{-Cn^{1/3}}$-mixer.
We show this by setting $m = \lfloor 10n^{1/3} \rfloor$and then taking $A=B=C$ to be the conjugacy class of all permutations with $m$ fixed points and one $(n- m)$-cycle (see Proposition \ref{Prop:not a mixer} below).

\remove{
 What is $C$? I tried $A=B=C$, and I think it fails: The probability that a permutation has exactly $k$ fixed points (up to a factor of $1/e$) is $\frac{\binom{n}{k}\cdot(n-k)!}{n!} = \frac{1}{k!}$ and in our case we get $(n^{n^{1/3}/3})^{-1}$. On the other hand, the probability that two sets of size $n^{1/3}$ will be the same is $\frac{1}{\binom{n}{n^{1/3}}} \approx (n^{2n^{1/3}/3})^{-1}$, which is much smaller. I think that a constant factor (like $10n^{1/3}$ fixed points) will not affect this. Maybe if $C$ requires less fixed points then the freedom in choosing the joint fixed points will give a better result?
}

Combining Theorem \ref{thm:normally mixing} with an observation of Nikolov and Pyber \cite{nikolov2011product}, we obtain the following. 

\begin{cor}\label{cor:Nikolov pyber}
    There exists an absolute constant $c>0$, such that the following holds. Suppose that $A\subseteq A_n$ is a normal set of density  $\ge n^{-cn^{1/3}},$ then $A^3=A_n$.
\end{cor}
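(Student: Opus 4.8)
The plan is to deduce Corollary~\ref{cor:Nikolov pyber} from the normal mixing statement of Theorem~\ref{thm:normally mixing} together with the Nikolov--Pyber observation. First I would recall what that observation says: if $A$, $B$, $C$ are subsets of a finite group $G$ with $|A|\cdot|B|\cdot|C| > |G|^2$, then $ABC = G$. (The proof is a short counting/Cauchy--Schwarz argument: for any $g\in G$, the number of triples $(a,b,c)\in A\times B\times C$ with $abc=g$ is $\sum_{x} 1_A(x)1_B(x^{-1}g c^{-1})\cdots$, and one shows the count is positive whenever the density product exceeds $|G|^{-2}$.) So the combinatorial content needed is purely a size bound, and the job reduces to checking that a normal set of density $\ge n^{-cn^{1/3}}$ is large enough.

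The key steps, in order, are as follows. Let $c_0>0$ be the constant from Theorem~\ref{thm:normally mixing}, so that $A_n$ is normally an $n^{-c_0 n^{1/3}}$-mixer. Take $A\subseteq A_n$ normal of density $\delta \ge n^{-cn^{1/3}}$ where $c$ is a constant to be fixed, with $c \le c_0$, so that the mixer hypothesis applies to the triple $(A,A,A)$. By the definition of normal mixer, for independently chosen $a,b\sim A$ the probability that $ab\in C$ is within a factor $1.01$ of $|C|/|A_n|$ for every normal set $C$; in particular it is \emph{positive} as soon as $C$ is nonempty and normal. Now fix any $g\in A_n$ and let $C = g^{-1}\cdot$(something normal) --- here one must be slightly careful, since a single coset $g^{-1}$ is not normal. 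The cleaner route: apply the mixer property with $C$ the normal closure is overkill; instead observe directly that $AA$ is a normal set (product of normal sets is normal), and the mixer property applied to $(A,A,AA)$ shows $\Pr_{a,b\sim A}[ab \in AA]$ is within $1.01$ of $|AA|/|A_n|$; since the left side is $1$, we get $|AA| \ge |A_n|/1.01$. Then $AAA \supseteq AA \cdot A$ has size $\ge |A_n|/1.01$ as well, but to get equality $AAA = A_n$ we use the Nikolov--Pyber size bound: $|A|\cdot|AA|\cdot|A| \ge \delta^2 (|A_n|/1.01)\,|A_n|^2 / |A_n|$, and since $|AA|\ge |A_n|/1.01 > |A_n|^{2/3}$ trivially (as $\delta$ is tiny but $|AA|$ is already a constant fraction of the group), the product $|A|^2|AA| > |A_n|^2$ fails for tiny $\delta$ --- so this naive route does not immediately work and one should instead apply Nikolov--Pyber to $(A,A,A)$ directly. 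For that we need $|A|^3 > |A_n|^2$, i.e. $\delta^3 > |A_n|^{-1} = (n!/2)^{-1}$, i.e. $\delta > (n!/2)^{-1/3} = n^{-(1+o(1))n/3}$, which is \emph{implied} by $\delta \ge n^{-cn^{1/3}}$ only when... it is not. So the honest argument must genuinely use the mixer strength: apply Theorem~\ref{thm:normally mixing} to conclude $|AA| \ge (1-1.01^{-1})$-fraction--- no. Let me restate: the mixer property gives $\Pr_{a,b\sim A}[ab=g]$ essentially uniform over normal $g$; summing, $AA$ meets every conjugacy class, and moreover $|AA| \ge 0.99|A_n|$. Then $A \cdot AA$: apply mixer to $(A, AA, C)$ for any normal $C$ to see $A\cdot AA$ meets every conjugacy class, and since $|A|\cdot|AA| \ge \delta \cdot 0.99 |A_n|$, which for $\delta \ge n^{-cn^{1/3}}$ satisfies $|A|\cdot|AA|\cdot|A_n| \gg |A_n|^2$ --- wait, we need a three-fold product. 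Applying Nikolov--Pyber to the triple $(A, A, AA)$: the condition is $|A|^2|AA| > |A_n|^2$, i.e. $\delta^2 \cdot 0.99 > |A_n|^{-1}$, which holds comfortably since $\delta^2 \ge n^{-2cn^{1/3}} \gg (n!)^{-1}$. Hence $A\cdot A \cdot AA = A_n$, so in particular $A^4 = A_n$; to sharpen to $A^3$, note $A^4 = A_n$ and $A$ normal imply, by a standard argument, that already $A^3 \supseteq A^4 \cdot$ (inverse adjustments)--- more cleanly, apply Nikolov--Pyber to $(A,A,A)$ after first boosting: since $|AA|\ge 0.99|A_n|$ we get $|A|\cdot|A|\cdot|AA| > |A_n|^2$ already gives $A^4=A_n$; and running the mixer once more, $AA$ has density $\ge 0.99$, so $(AA)(A) = A\cdot AA$ has $|A|\cdot|AA| > |A_n|^{1}\cdot|A_n|^{?}$... the correct final step is: $A^3 = A\cdot (AA)$ and apply Nikolov--Pyber to $(A, AA, \{e\})$? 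No. I will instead record that $|AA|\geq \tfrac12 |A_n|$ combined with $A$ normal forces $A^3=A_n$ by the elementary fact that if $X,Y$ are subsets of a finite group with $|X|+|Y|>|G|$ then $XY=G$; applying this with $X=A\cdot A$ (size $>|A_n|/2$) and $Y = A$... needs $|A|$ large, which fails.

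Given the tension above, the actual plan is: (1) cite Nikolov--Pyber in the form ``$|A||B||C|>|G|^2 \Rightarrow ABC=G$''; (2) use Theorem~\ref{thm:normally mixing} to get $|A\cdot A| \ge (1-\tfrac{1}{1.01})|A_n| \cdot$(number of classes weighting)---more precisely, that $A^2$ contains a $0.99$ fraction of $A_n$; (3) then $|A| \cdot |A| \cdot |A^2| \ge n^{-2cn^{1/3}} \cdot 0.99\,|A_n| \cdot |A_n|^2/|A_n|$, and since $n^{-2cn^{1/3}} \cdot 0.99 > |A_n|^{-1}$ for all large $n$ and any fixed $c$ (as $|A_n|^{-1} = (n!/2)^{-1} \le n^{-n/3}$), Nikolov--Pyber applied to $(A,A,A^2)$ yields $A^4 = A_n$; (4) finally, repeat: $|A^2| \ge 0.99|A_n|$, so $|A^2|\cdot|A|\cdot|A| > |A_n|^2$ was already step (3); to descend to $A^3$, observe $A^3 \supseteq A \cdot A^2$ and, applying Theorem~\ref{thm:normally mixing} to the triple $(A, A^2, C)$ as a mixer statement (noting $A^2$ is normal of density $\ge 0.99$), we get that $A\cdot A^2 = A^3$ hits every conjugacy class with the near-uniform probability, hence $|A^3| \ge 0.99 |A_n|$, and then $|A^3|\cdot|A^3|\cdot|A^3| > |A_n|^2$ trivially gives nothing new, but $|A| \cdot |A^3|$... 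The cleanest honest conclusion: since $|A^3| \ge 0.99|A_n| > |A_n|/2$ and likewise any further product, and since $A$ normal with $A^3$ of density $>1/2$ forces $A^6 = A_n$ hence by a diameter/normal-closure argument $A^3=A_n$ --- the main obstacle, and the step I expect to require the most care, is precisely nailing down how the ``$0.99$-fraction'' output of the normal mixer upgrades to the exact equality $A^3 = A_n$ rather than merely $A^{O(1)} = A_n$. I expect the paper resolves this by applying the mixer property not just to product-sets but directly: for a normal $A$ of the stated density and \emph{any} $g \in A_n$, one considers the normal set $C$ equal to the conjugacy class of $g$ and concludes $\Pr_{a,b,c \sim A}[abc \in C]$ is within $1.01$ of $|C|/|A_n| > 0$, whence $g \in A^3$; this uses the mixer statement for the \emph{three-fold} product $A\cdot A \cdot A$ landing in $C$, which is exactly the content one gets by one more application of the near-uniformity, and it directly gives $A^3 = A_n$ with no loss. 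So the final plan is: invoke Theorem~\ref{thm:normally mixing} to say $\Pr_{a,b,c\sim A}[abc \in g^{A_n}]$ is within factor $1.01$ of $|g^{A_n}|/|A_n|$ for every $g$, which is positive; conclude every $g$ lies in $A^3$; done.
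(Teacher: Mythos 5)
Your final plan has a genuine gap, and it is precisely the point the paper's one-line proof is designed to get around. You propose to apply Theorem~\ref{thm:normally mixing} with target set $C=g^{A_n}$ and conclude that $\Pr[abc\in g^{A_n}]>0$ for every $g$. Two things go wrong. First, the theorem is a statement about the two-fold product $ab$, not about $abc$; a three-fold near-uniformity statement is not ``one more application'' of the stated result, since the distribution of $ab$ is not uniform on $A^2$, so you cannot feed it back into the mixer hypothesis (and the $L^2$ mixing results elsewhere in the paper do not give the pointwise positivity you would need). Second, and more fundamentally, the mixer property requires \emph{all three} sets to have density $\ge n^{-cn^{1/3}}$, and the conjugacy class $g^{A_n}$ can be far smaller than that (e.g.\ for $g$ a product of two transpositions its density is of order $n^4/n!$). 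So the hypothesis simply fails for the $C$ you chose, and no choice of the constant $c$ repairs this. Your intermediate observations (e.g.\ that $A^2$ is normal of density $\ge 1/1.01$) are correct but, as you yourself note, do not close the gap to $A^3=A_n$: $|A^2|+|A|>|A_n|$ fails, and Nikolov--Pyber's counting bound $|X||Y||Z|>|G|^2$ is hopeless at these densities.

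The paper's fix is a small but essential twist on your idea: for a given $\sigma\in A_n$, apply the mixer to the triple $(A,A,C)$ with $C=A^{-1}\sigma^{A_n}$. This $C$ is normal (a product of normal sets) and has density at least that of $A$, since it contains a translate $A^{-1}\tau$ of $A^{-1}$; so the density hypothesis holds for all three sets regardless of how small the class of $\sigma$ is. The conclusion $A^2\cap C\ne\emptyset$ yields $a^{-1}\tau\in A^2$ for some $a\in A$ and some conjugate $\tau$ of $\sigma$, hence $\tau\in A^3$, and normality of $A^3$ gives $\sigma\in A^3$. You should replace your final step with this choice of $C$; everything else you need is already contained in Theorem~\ref{thm:normally mixing} as stated.
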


Larsen and Tiep~\cite{larsen2023squares} showed that for each $\epsilon>0$ there exists $n_0$, such that if $n>n_0$ and $A$ is a normal subset of $A_n$ with $A=A^{-1}$ and density $\ge e^{-n^{1/4-\epsilon}}$, then $A^2 =A_n.$ We conjecture that the same holds for sets $A$ of density at least $n^{- c n}$.

Larsen and Shalev\cite{larsen2008characters} settled the case when $2$ is replaced by $4$ in a very strong sense. They showed that for every conjugacy class $A$ with at most $n/5$ fixed points we have $A^4=A_n$. Garonzi and Mar\'{o}ti~\cite{garonzi2021alternating} showed (in an equivalent formulation) that for each $\epsilon>0$ there exists $n_0$, such that if $n>n_0$ and $A,B,C,D$ are normal subsets of $A_n$ of density $\ge |A_n|^{1/2+\epsilon}$, then $ABCD = A_n$. They conjectured that there exists $c>0$, such that if $A,B,C$ are normal subsets of $A_n$ of density $\ge n^{-cn},$ then $ABC = A_n.$ This problem remains open. 

\section{Dimension and globalness of low level characters}\label{sec:proof of main theorem}

In this section we first present well known bounds for the dimensions of irreducible characters of $S_n$. We then prove Proposition \ref{prop:characters are global}, namely, that irreducible characters are global. We combine it with a result of Keevash and Lifshitz~\cite{keevash2023sharp} (Theorem \ref{thm:Hypercontractivity EKL} below) to finish the proof of Theorem \ref{thm:main} and bound character norms. Finally, we explain how to adapt our result for the alternating group.

Throughout the paper we assume some familiarity with standard notions and results from the representation theory of the symmetric group, such as Young's branching rule, standard Young tableaux and the Murnaghan--Nakayama rule. The reader is referred to~\cite{sagan2013symmetric} for more information.
In this section we show that the properties of characters with a long first row or column can be computed in terms of the properties of the smaller character obtained by deleting said row or column. 

Let $d$ be a positive integer.
We say that $\tilde{\lambda}$ is the \emph{partition of $d$ corresponding to} $\lambda$ if $\tilde{\lambda}$ is obtained from $\lambda$ by deleting $\lambda_1$ in the case where $\lambda_1 = n-d$ and by deleting $\lambda_1'$ from $\lambda'$ in the case where $\lambda_1'=n-d$. Given a character $\chi$ of a group, we denote by $\dim(\chi)$ the dimension of the corresponding representation. We also use the standard notation $[n] := \{1,\dots,n\}$ where $n$ is a positive integer.

\subsection{Dimensions of low level characters}
We make use of the following standard lower bound on the dimensions of characters and we include its proof for completeness.
\begin{lem}\label{lem: dimension of characters}
Let $d$ be a positive integer and let $\lambda\vdash n$ of level $d$. Denote $\chi := \chi_\lambda$ and $\tilde{\chi} := \chi_{\tilde{\lambda}}$.
Then $\dim(\chi) \ge \binom{n-d}{d}\dim(\tilde{\chi})$.
\end{lem}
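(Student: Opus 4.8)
First I would reduce to a normal form. Since $\dim(\chi_\lambda)=\dim(\chi_{\lambda'})$ and, by definition, $\tilde\lambda$ is the same partition of $d$ whether we delete the first row from $\lambda$ (when $\lambda_1=n-d$) or the first row from $\lambda'$ (when $\lambda_1'=n-d$), it suffices to treat the case $\lambda_1=n-d$, so that $\lambda=(n-d,\tilde\lambda_1,\tilde\lambda_2,\ldots)$ with $\tilde\lambda\vdash d$. If $n<2d$ then $\binom{n-d}{d}=0$ and there is nothing to prove, so I may assume $n\ge 2d$; in particular $\tilde\lambda_1\le d\le n-d$, so $\lambda$ is a genuine partition and its first row is at least as long as the second. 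I would then pass to the combinatorial interpretation $\dim(\chi_\mu)=f^\mu$, the number of standard Young tableaux of shape $\mu$, and exhibit an explicit injection from the set of pairs $(S,\tilde T)$, where $S\subseteq\{d+1,\dots,n\}$ is a $d$-element set and $\tilde T$ is a standard Young tableau of shape $\tilde\lambda$ on $[d]$, into the set of standard Young tableaux of shape $\lambda$.

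The injection is the obvious one: given $(S,\tilde T)$, fill the boxes of $\lambda$ in rows $\ge 2$ (which form a copy of the diagram of $\tilde\lambda$) with the entries of $S$, placed according to the order-isomorphism $[d]\to S$ applied to $\tilde T$; and fill the first row of $\lambda$ with the $n-d$ elements of $[n]\setminus S$ in increasing order. Recovering $S$ as the set of entries appearing in rows $\ge 2$, and $\tilde T$ by restricting and relabelling, shows injectivity immediately, so the only real content is checking that the resulting filling $T$ is a standard Young tableau.

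Rows are increasing by construction, and the column-strictness and row-strictness inside the sub-tableau on rows $\ge 2$ are inherited from $\tilde T$ because order-isomorphisms preserve them. The one inequality that must be verified is column-strictness across the junction, i.e.\ $T(1,j)<T(2,j)$ for $1\le j\le\tilde\lambda_1$. This is exactly the place where the restriction $S\subseteq\{d+1,\dots,n\}$ is used: then $\{1,\dots,d\}\subseteq[n]\setminus S$, so the $j$-th smallest element of $[n]\setminus S$ is exactly $j$ for $j\le d$, giving $T(1,j)=j$; on the other hand every entry of $T$ in rows $\ge 2$ lies in $S$, hence is $\ge d+1> j$ since $j\le\tilde\lambda_1\le d$. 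Thus $T(1,j)=j<d+1\le T(2,j)$ and $T$ is standard. Consequently $f^\lambda\ge\binom{n-d}{d}f^{\tilde\lambda}$, which is the claim. I expect the main (indeed only) subtlety to be choosing the right sub-family of $d$-subsets so that the junction inequality holds automatically for \emph{every} tableau $\tilde T$; restricting to subsets of $\{d+1,\dots,n\}$ does this while still leaving $\binom{n-d}{d}$ choices. (An alternative route, which I would mention only in passing, is a direct computation with the hook length formula: the hook lengths of $\lambda$ split into the hook lengths of $\tilde\lambda$ together with the $n-d$ first-row hooks, and bounding the product of the latter by $n!\,(n-2d)!/(n-d)!$ yields the same inequality; but the tableau injection is cleaner and avoids that estimate.)
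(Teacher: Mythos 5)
Your proposal is correct and is essentially the paper's own argument: both reduce to $\lambda_1=n-d$ via conjugation, force the labels $1,\dots,d$ into the first row so that the set $S$ of entries below the first row ranges over $d$-subsets of $\{d+1,\dots,n\}$, and count $\binom{n-d}{d}\dim(\tilde\chi)$ resulting standard tableaux. Your write-up just verifies the column-strictness at the junction more explicitly than the paper does.
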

\begin{proof}
If $n < 2d$ then the statement follows immediately, so we may assume that $n \ge 2d$. Since the character $\chi_\lambda$ and the character $\chi_{\lambda'} = \chi_\lambda \otimes \text{sign}$ obtained by transposing the corresponding Young diagram have the same dimension, we can assume without loss of generality that $\lambda_1 = n-d$. The dimension of $\chi$ is the number of ways to fill the Young diagram of shape $\lambda$ with the labels $1,\dots,n$, each appearing once, such that the rows and columns are increasing. Some of these labelings are obtained as follows: First, we put the labels $1,\ldots, d$ on the first $d$ boxes of the first row. Once we do that, the set of labels appearing below the first row can be any $S \subseteq [n]\setminus[d]$ with $|S|=d$. Moreover, for each set $S$, every valid placing of its labels below the first row corresponds to a unique valid placing of the labels of $[d]$ in the Young diagram of shape $\tilde{\lambda}$, and the number of these placings is $\dim(\tilde{\chi})$. Therefore, we have at least $\binom{n-d}{d}\dim(\tilde{\chi})$ valid ways to fill the Young diagram of shape $\lambda$.
\end{proof}

\begin{lem}\label{lem: dimension of characters converse}
Let $d$ be an integer and let $\lambda\vdash n$ of level $d$. Denote $\chi := \chi_\lambda$ and $\tilde{\chi} := \chi_{\tilde{\lambda}}$.
Then 
\[\dim(\chi) \le \binom{n}{d}\dim(\tilde{\chi}).\]
\end{lem}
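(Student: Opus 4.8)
\textbf{Proof plan for Lemma~\ref{lem: dimension of characters converse}.}
The plan is to mirror the counting argument used in Lemma~\ref{lem: dimension of characters}, but now producing an \emph{injection} from labelings of $\lambda$ into a product set whose size is $\binom{n}{d}\dim(\tilde\chi)$, rather than exhibiting a subset of labelings as before. As in the previous lemma, since $\chi_\lambda$ and $\chi_{\lambda'}$ have the same dimension, I may assume without loss of generality that $\lambda_1 = n-d$, so that $\tilde\lambda$ is obtained by deleting the first row. Recall that $\dim(\chi)$ counts standard Young tableaux of shape $\lambda$, i.e.\ fillings of the Young diagram with $1,\dots,n$ with rows and columns increasing.

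First I would set up the map. Given a standard Young tableau $T$ of shape $\lambda$, let $S \subseteq [n]$ be the set of $d$ labels appearing in the boxes of $\lambda$ that lie \emph{below} the first row (there are exactly $d$ such boxes, since $\lambda$ has $n$ boxes and a first row of length $n-d$). The labels in $S$, read off in the positions of the skew-ish shape below row one, form a filling of the Young diagram of shape $\tilde\lambda$ with the $d$ elements of $S$; replacing each element of $S$ by its rank within $S$ turns this into a standard Young tableau $\tilde T$ of shape $\tilde\lambda$, and this relabeling is clearly reversible once $S$ is known. I would then send $T \mapsto (S, \tilde T)$.

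Next I would check this map is well-defined and injective. Well-definedness: the sub-array below the first row inherits increasing rows and columns from $T$, so after order-preserving relabeling it is a genuine standard Young tableau of shape $\tilde\lambda$. Injectivity: from the pair $(S,\tilde T)$ one recovers the entries below the first row of $T$ by substituting the $k$-th smallest element of $S$ for each $k$ appearing in $\tilde T$; the first row of $T$ is then forced, since it must consist of the remaining $n-d$ labels $[n]\setminus S$ in increasing order (a standard tableau has its first row increasing, and these are the only labels left). Hence $T$ is determined by $(S,\tilde T)$, so the map is injective, and $\dim(\chi) = \#\{T\} \le \#\{(S,\tilde T)\} = \binom{n}{d}\dim(\tilde\chi)$.

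I expect no serious obstacle here; the only point requiring a little care is the claim that the first row of $T$ is completely determined by the complement $[n]\setminus S$ together with the increasing condition, which is immediate for the first row but would fail if we tried to argue positions elsewhere — fortunately we only need it for row one. A secondary bookkeeping point is the edge case $n < 2d$, where the bound is trivial (indeed $\binom nd \ge 1$ and $\dim(\chi)\le \dim$ of the regular-type bound), so it can be dispatched in one line exactly as in the previous lemma.
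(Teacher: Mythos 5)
Your proof is correct and is essentially the paper's argument: the paper describes a process (choose $S$, fill the first row with $[n]\setminus S$ in increasing order, place $S$ below according to a standard tableau of shape $\tilde\lambda$) that produces every standard Young tableau of shape $\lambda$, which is exactly the inverse of your injection $T\mapsto(S,\tilde T)$. The reduction to $\lambda_1=n-d$ via conjugation and the counting $\binom{n}{d}\dim(\tilde\chi)$ match as well.
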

\begin{proof}
Again we may assume without loss of generality that $\lambda_1 = n-d.$ 
Every filling of the Young diagram of shape $\lambda$ with increasing rows and columns is obtained through the following process: First, choose a set $S \subseteq [n]$ with $|S|=d$. Next, place the labels of $[n] \setminus S$ in the first row in increasing order. Then, place the labels of $S$ in the other rows so that re-labeling $S$ with $1,\ldots, d$ results in a valid labeling of the Young diagram of shape $\tilde{\lambda}$. The number of fillings obtained by this process is exactly $\binom{n}{d}\dim(\tilde{\chi})$. Since every valid filling can be constructed using this process, the number of valid fillings of shape $\lambda$ is at most $\binom{n}{d}\dim(\tilde{\chi})$.
\end{proof}

Lemma~\ref{lem: dimension of characters converse} may be applied to derive another lower bound which does not involve $\dim(\tilde{\chi})$.
\begin{cor}\label{cor: dimension of characters n over sqrt d}
    Let  $\chi$ be an irreducible character of $S_n$, and let us denote its level by $d$. Then
    \[\dim(\chi) \le \frac{n^d}{\sqrt{d!}}.\]
\end{cor}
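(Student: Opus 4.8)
The plan is to deduce Corollary~\ref{cor: dimension of characters n over sqrt d} from Lemma~\ref{lem: dimension of characters converse} by iterating the bound $\dim(\chi_\lambda) \le \binom{n}{d}\dim(\chi_{\tilde\lambda})$ down the ``staircase'' of nested partitions, and then controlling the resulting product of binomial coefficients. First I would set up notation: write $\chi = \chi_\lambda$ with level $d$, and (using $\dim(\chi_\lambda) = \dim(\chi_{\lambda'})$ as in the lemmas) assume $\lambda_1 = n-d$. The partition $\tilde\lambda$ obtained by deleting the first row is a partition of $d$; but rather than peeling off one row of size $n-d$, the cleanest iteration is to note that $\tilde\lambda$ itself is a partition of $d$ with some level $d' \le d/2$ (its level is at most half its size), so applying Lemma~\ref{lem: dimension of characters converse} again gives $\dim(\chi_{\tilde\lambda}) \le \binom{d}{d'}\dim(\chi_{\tilde{\tilde\lambda}})$, and so on.

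The key steps, in order: (1) Observe the telescoping inequality. If $\lambda = \lambda^{(0)}, \lambda^{(1)} = \tilde\lambda, \lambda^{(2)}, \dots, \lambda^{(k)} = \emptyset$ is the sequence of successively-corresponding partitions, with $n = n_0 > n_1 > \cdots > n_k = 0$ the sizes and $d_i$ the level of $\lambda^{(i)}$, then applying Lemma~\ref{lem: dimension of characters converse} at each stage yields $\dim(\chi) \le \prod_{i=0}^{k-1}\binom{n_i}{d_i}$, since $\dim(\chi_\emptyset)=1$. (2) Note the size recursion $n_{i+1} = n_i - (n_i - d_i) \cdot [\text{or its conjugate}] = d_i$ when we delete the long first row; that is, $n_{i+1} = d_i$, and hence $d_{i+1} \le d_i / 2$ since the level of any partition of $m$ is at most $m/2$. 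So $n_i = d_{i-1}$ and the exponents $d_i$ decay at least geometrically: $d_i \le d/2^{i-1}$ for $i \ge 1$. (3) Bound the product. We have $\dim(\chi) \le \binom{n}{d} \prod_{i=1}^{k-1}\binom{d_{i-1}}{d_i} \le \binom{n}{d}\prod_{i\ge 1} d_{i-1}^{d_i} \le \frac{n^d}{d!}\cdot \prod_{i \ge 1} d_{i-1}^{d_i}$, using $\binom{n}{d}\le n^d/d!$ and $\binom{d_{i-1}}{d_i}\le d_{i-1}^{d_i}$. Finally estimate $\prod_{i\ge 1} d_{i-1}^{d_i}$: since $d_0 = d$ and $d_i \le d/2^{i-1}$, one gets $\log \prod_{i\ge1} d_{i-1}^{d_i} = \sum_{i\ge 1} d_i \log d_{i-1} \le (\log d)\sum_{i\ge 1} d_i \le (\log d)\cdot d$, i.e. the product is at most $d^{d}$. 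That is off by a factor roughly $d^d / \sqrt{d!} \approx (ed)^{d/2}$ from the target, so I would need to be sharper.

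The sharper route: use $\binom{n}{d} \le n^d/d!$ and $\binom{d_{i-1}}{d_i} \le d_{i-1}^{d_i}/d_i!$ at every stage, giving $\dim(\chi) \le \frac{n^d}{d!}\prod_{i\ge 1}\frac{d_{i-1}^{d_i}}{d_i!}$. Then the claim $\dim(\chi)\le n^d/\sqrt{d!}$ reduces to showing $\prod_{i\ge1} \frac{d_{i-1}^{d_i}}{d_i!} \le \sqrt{d!}$, and since $d_{i-1} \le d$ this follows from $\prod_{i\ge1}\frac{d^{d_i}}{d_i!} \le \sqrt{d!}$, i.e. $\sum_{i\ge1}(d_i\log d - \log d_i!) \le \tfrac12\log d!$. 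Using $\log d_i! \ge d_i\log d_i - d_i$ and $d_i \le d/2^{i-1}$, the left side is at most $\sum_{i\ge 1} d_i(\log d - \log d_i + 1) \le \sum_{i\ge 1} d_i((i-1)\log 2 + 1)$, which is $O(d)$ and indeed at most $\tfrac12 d\log d - O(d) \le \tfrac12\log d!$ once $d$ is not tiny; the finitely many small $d$ are checked directly. I expect the main obstacle to be exactly this last arithmetic: getting the constants to land on $\sqrt{d!}$ rather than something like $C^d\sqrt{d!}$ requires using the $1/d_i!$ savings at every level of the recursion and the geometric decay of the $d_i$, not just the crude $\binom{n}{d}\le n^d$. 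One should double-check the base cases $d=1,2$ separately, where $\dim(\chi) = n-1$ and $\le \binom{n}{2}$ respectively, against $n^d/\sqrt{d!}$.
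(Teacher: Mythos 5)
There is a genuine gap, and it sits exactly at your step (2). You claim that the level of a partition of $m$ is at most $m/2$, and hence that the levels $d_i$ in your iteration decay geometrically, $d_{i+1}\le d_i/2$. This is false: the level of $\mu\vdash m$ is $\min(m-\mu_1,\,m-\mu_1')$, and since all one can say in general is $\mu_1\mu_1'\ge m$, the level can be as large as roughly $m-\sqrt{m}$. Concretely, $(3,3,3)\vdash 9$ has level $6>9/2$, and square-shaped $\tilde\lambda\vdash d$ are perfectly possible here (nothing about $\lambda_1=n-d$ prevents $\tilde\lambda$ from being, say, a $\sqrt{d}\times\sqrt{d}$ square). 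With the correct decay $d_{i+1}\le d_i-\sqrt{d_i}$ the number of iterations is of order $\sqrt{d}$ and $\sum_i d_i$ can be of order $d^{3/2}$, so both your crude bound ($\prod_i d_{i-1}^{d_i}\le d^{\sum d_i}$) and your ``sharper route'' (whose left-hand side is bounded below by terms of total size $\Omega(d^{3/2})$ when the $d_i$ stay close to $d$) blow up far past $\sqrt{d!}=e^{\frac12 d\log d-O(d)}$. The iterated branching bound is simply too lossy to recover the $\sqrt{d!}$ factor.

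The missing idea is much simpler and is what the paper does: after the single application of Lemma~\ref{lem: dimension of characters converse}, $\tilde\chi=\chi_{\tilde\lambda}$ is an irreducible character of $S_d$, and for any finite group $G$ an irreducible representation satisfies $\dim(\rho)^2\le\sum_{\rho'}\dim(\rho')^2=|G|$; hence $\dim(\tilde\chi)\le\sqrt{d!}$ with no further recursion. Combined with $\binom{n}{d}\le n^d/d!$ this gives $\dim(\chi)\le\frac{n^d}{d!}\cdot\sqrt{d!}=\frac{n^d}{\sqrt{d!}}$ in one line. So your opening move (reduce to $\lambda_1=n-d$ and apply Lemma~\ref{lem: dimension of characters converse} once) is exactly right; you should stop there and invoke the sum-of-squares identity for $S_d$ rather than iterate.
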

\begin{proof}
    Denote by $\lambda \vdash n$ the partition associated with $\chi$. We may assume, without loss of generality, that $\lambda_1 = n-d$.
    By Lemma~\ref{lem: dimension of characters converse} we obtain that
    \[\dim(\chi) \le \binom{n}{d}\dim(\tilde{\chi}) \le \frac{n^{d}}{d!}\dim(\tilde{\chi}),\]
    where $\tilde{\chi}$ is the character associated with the partition obtained by removing the first row of $\lambda$. Since $\tilde{\chi}$ is an irreducible character of $S_d$, and the sum of squares of the dimensions of the irreducible representations of $S_d$ is $d!$, we obtain the trivial bound $\dim(\tilde{\chi}) \le \sqrt{d!}$. This completes the proof of the corollary.
\end{proof}

The following useful bound can be deduced from \cite[Claim 1, Claim 2 and Theorem 19]{ellis2011intersecting}:
\begin{thm}
\label{thm:EFP dimensions of representations}
    There exists $n_0>0$, such that the following holds. Let $n>n_0,$ $d\le n/200$ and suppose that $\chi$ is a character of $S_n$ of level at least  $d$. Then $\dim(\chi) \ge \left(\frac{n}{ed}\right)^d$. 
\end{thm}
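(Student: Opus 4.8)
\textbf{Proof plan for Theorem~\ref{thm:EFP dimensions of representations}.}

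The statement is a lower bound $\dim(\chi) \ge (n/ed)^d$ for characters $\chi = \chi_\lambda$ of level at least $d$, where $d \le n/200$ and $n$ is large. My plan is to reduce to the level-exactly-$d$ case and then iterate the branching-type lower bound of Lemma~\ref{lem: dimension of characters}. First I would observe that if $\chi$ has level $\ge d$, then by monotonicity it suffices to prove the bound when the level equals some $d' \ge d$; but in fact the cleanest route is to show the bound is monotone in the level in the relevant range, so that we only need the case where the level is exactly $d$. Concretely, if $\lambda$ has level $d' \ge d$, one can compare $\dim(\chi_\lambda)$ with the dimension of a character of level exactly $d$ sitting "inside" $\lambda$ (e.g. obtained by a suitable restriction or by truncating the Young diagram), and check that passing from level $d$ to level $d'$ only increases the dimension faster than $(n/ed)^d$ grows — this is where I expect to lean on the cited \cite[Claim 1, Claim 2]{ellis2011intersecting}.

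For the core estimate when $\operatorname{level}(\lambda) = d$: without loss of generality $\lambda_1 = n - d$ (replacing $\lambda$ by $\lambda'$ if necessary, which preserves dimension). By Lemma~\ref{lem: dimension of characters}, $\dim(\chi_\lambda) \ge \binom{n-d}{d}\dim(\chi_{\tilde\lambda})$ where $\tilde\lambda \vdash d$. Since $\dim(\chi_{\tilde\lambda}) \ge 1$ trivially, we get $\dim(\chi_\lambda) \ge \binom{n-d}{d}$. It then remains to verify the elementary inequality $\binom{n-d}{d} \ge (n/ed)^d$ in the range $d \le n/200$ and $n$ large. Using $\binom{n-d}{d} \ge \left(\frac{n-d}{d}\right)^d \cdot \frac{1}{\text{(something)}}$ is too lossy; instead I would use $\binom{n-d}{d} \ge \left(\frac{n-2d}{d}\right)^d$ or the sharper $\binom{m}{d} \ge (m/d)^d \cdot$ (a factor), combined with $\binom{m}{d} \ge \frac{m^d}{d^d}\prod_{i=0}^{d-1}(1 - i/m)$ and bounding the product below by $e^{-d^2/(m-d)}$, which with $m = n-d$ and $d \le n/200$ stays comfortably above $e^{-d}$; this gives $\binom{n-d}{d} \ge (n-d)^d d^{-d} e^{-d} \ge (n/ed)^d$ once $n$ is large enough that $(n-d)/e \ge n/e \cdot (199/200) \ge \dots$ — a short constant-chasing computation.

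The main obstacle I anticipate is the reduction step: Lemma~\ref{lem: dimension of characters} as stated is clean only for level exactly $d$, and a character "of level at least $d$" could a priori be one where $\lambda_1$ is much smaller than $n - d$ (so both $n - \lambda_1$ and $n - \lambda_1'$ are large), in which case one wants to say the dimension is at least as large as that of some level-$d$ character — this monotonicity is intuitively obvious from the Young-tableaux counting interpretation (a "fatter" diagram has more standard tableaux) but needs the precise comparison results of Ellis--Friedgut--Pilpel to make rigorous, which is why the theorem is attributed to \cite{ellis2011intersecting}. If I wanted a self-contained argument I would instead note that any $\lambda$ of level $\ge d$ dominates (in the containment order of Young diagrams, after possibly conjugating) a hook-like shape $(n-d, 1^d)$ or a shape with first row $n-d$, and use that containment of diagrams implies inequality of numbers of SYT, reducing again to $\binom{n-d}{d} \ge (n/ed)^d$.
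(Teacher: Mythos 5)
The paper does not actually prove this theorem: it is quoted directly from Ellis--Friedgut--Pilpel, with the statement "can be deduced from [Claim 1, Claim 2 and Theorem 19] of \cite{ellis2011intersecting}". So the only fair comparison is between your proposal and that citation. Your treatment of the case where the level is \emph{exactly} $d$ is correct and complete: Lemma~\ref{lem: dimension of characters} gives $\dim(\chi_\lambda)\ge\binom{n-d}{d}$, and your estimate $\binom{n-d}{d}\ge \frac{(n-d)^d}{d^d}e^{-d^2/(n-2d)}\ge \left(\frac{n}{ed}\right)^d$ checks out for $d\le n/200$ (indeed with a lot of room, and without needing $n$ large). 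You also correctly identify that the entire difficulty is the reduction from "level at least $d$" to "level exactly $d$", and your primary plan --- to lean on the cited EFP claims for that reduction --- is exactly what the paper does.

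However, your proposed \emph{self-contained} route for the reduction has a genuine gap. A partition $\lambda\vdash n$ of level $\ge d$ need not contain (even after conjugating) any shape with first row $n-d$, nor a hook $(n-d,1^d)$: take $\lambda=(2,2,\dots,2)$ with $n$ even, which has $\lambda_1=2$, $\lambda_1'=n/2$, hence level $n/2\ge d$, yet contains no diagram whose first row has length $n-d$. More generally, two distinct partitions of the same $n$ can never be related by containment, so "containment implies more SYT" cannot by itself compare a level-$d'$ character with a level-$d$ character of $S_n$. The monotonicity you invoke ("passing from level $d$ to level $d'$ only increases the dimension") is fine while $d'$ stays small --- there $\binom{n-d'}{d'}$ is increasing in $d'$ and the argument for exact level applies with $d'$ in place of $d$ --- but the hypothesis only bounds $d$, not the actual level $d'$, which can be as large as roughly $n-\sqrt{n}$. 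In that regime one needs a separate exponential lower bound on $\dim(\chi_\lambda)$ for all $\lambda$ with both $\lambda_1$ and $\lambda_1'$ bounded away from $n$; that is precisely the content of EFP's Theorem 19 and cannot be replaced by a containment argument. So either keep the citation for the large-level regime (as the paper does), or supply that exponential bound explicitly; as written, the "self-contained" alternative does not close.
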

\subsection{Globalness of low level characters}
Recall the definition of degree and the notations $U_{I\to J}$ and $f_{I\to J}$ from Section~\ref{subsec:globalness definitions}. As noted earlier, up to multiplication by sign, level $d$ characters of the symmetric group are functions of degree $d$.

We say that a function $f\colon S_n\to \mathbb{R}$ is $(r,\gamma)$-global if 
\[\|f_{I\to J}\|_2\le r^{|I|}\gamma\] 
for all $I,J$ of the same size. 

Keevash and Lifshitz \cite[Theorem 1.10]{keevash2023sharp} proved the following theorem. 
\begin{thm}\label{thm:Hypercontractivity EKL}
There exists an absolute constant $c>0$, such that the following holds. There exists a family of operators 
$\{T_{\rho}\}_{\rho \in (0,1)}$ on $L^2(S_n)$, each of which commutes with the action of $S_n$ from both sides, such that 
if $f \in L^2(S_n)$ is $(r,\gamma)$ global, $q\ge 2$ and $0<\rho \le \frac{c\log q}{qr}$, then \[ \|T_{\rho} f\|_q\le \gamma^{\frac{q-2}{q}}\|f\|_2^{2/q}.\]
Moreover, if $d< cn$ and $f$ is a function of degree $d$, then
\[ \langle T_{\rho}f,f \rangle \ge \left(c\rho\right)^d\|f\|_2^2.\]
\end{thm}

In order to apply this theorem, let us show that irreducible characters are global.
\begin{lem}
\label{lem:characters are global}
Let $d$ be a positive integer and let $\lambda\vdash n$ of level $d$. Denote $\chi := \chi_\lambda$ and $\tilde{\chi} := \chi_{\tilde{\lambda}}$.
Then $\chi$ is $(2, \tilde{\chi}(1))$-global. 
\end{lem}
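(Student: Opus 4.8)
The goal is to bound $\|\chi_{I\to J}\|_2$ for every pair of $m$-tuples $I,J$ of distinct coordinates, where $\chi = \chi_\lambda$ has level $d$. By the WLOG reduction already used throughout the section (replacing $\lambda$ by $\lambda'$ multiplies $\chi$ by $\mathrm{sign}$, which does not affect absolute values or $2$-norms), I may assume $\lambda_1 = n-d$. The plan is to induct on $m$ using Young's branching rule. Observe first that if $m \ge n - \lambda_1 = \ldots$ wait, more carefully: the bound we want is $\|\chi_{I\to J}\|_2 \le 2^m \tilde\chi(1)$, so for $m$ large enough (say $m$ comparable to $n$) the trivial bound $\|\chi_{I\to J}\|_2 \le \|\chi_{I\to J}\|_\infty \le \dim(\chi) \le \binom{n}{d}\dim(\tilde\chi) \le 2^n \tilde\chi(1)$ already suffices; but I want the clean bound for all $m$, so induction is the right tool.

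\textbf{Base case and induction step.} For $m = 0$ the claim is $\|\chi\|_2 \le \tilde\chi(1)$; in fact $\|\chi\|_2 = 1 \le \tilde\chi(1)$ since $\chi$ is an irreducible character, so this holds. For the inductive step, fix $m$-tuples $I = (i_1,\dots,i_m)$, $J = (j_1,\dots,j_m)$ and consider the restriction $\chi_{I \to J}$ as a function on $U_{I\to J}$. Peeling off the last coordinate, $U_{I\to J}$ decomposes according to where an element of $I' := (i_1,\dots,i_{m-1})$, $J' := (j_1,\dots,j_{m-1})$ restricted permutation sends... rather, I think the cleanest route is: $\chi$ restricted to the stabilizer of the point-map $i_m \mapsto j_m$ is the character of the restriction of the $S_n$-representation $V_\lambda$ to a copy of $S_{n-1}$, which by Young's branching rule decomposes as $\bigoplus_{\mu} V_\mu$ over partitions $\mu \vdash n-1$ obtained by removing a box from $\lambda$. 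Crucially, each such $\mu$ has $\mu_1 \ge \lambda_1 - 1 = n-1-d$, so $\mu$ has level at most $d$, and the partition $\tilde\mu$ of level-at-most-$d$ associated to $\mu$ is obtained from $\tilde\lambda$ by removing at most one box. Hence $\sum_\mu \tilde\chi_\mu(1)$ — summing over the (at most $d+1$, but really the relevant count is small) partitions $\mu$ appearing — is controlled: in fact the sum over all $\mu$ obtained by removing a box from $\lambda$ of $\dim(V_{\tilde\mu})$ is at most $2\,\tilde\chi(1)$, because these $\tilde\mu$ are either $\tilde\lambda$ itself (when the removed box is in the first row, giving $\dim(V_{\tilde\lambda})$) or obtained from $\tilde\lambda$ by removing one box, and $\sum_{\text{box removed from }\tilde\lambda}\dim(V_{\tilde\mu}) = \dim(V_{\tilde\lambda})$ by branching applied to $\tilde\lambda$. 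So the total is at most $2\tilde\chi(1)$.

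\textbf{Assembling the recursion.} Now I write $\|\chi_{I\to J}\|_2^2$ as an average over $U_{I\to J}$, condition on the image of $i_m$ (it must be $j_m$ since we already fixed that), and then the restriction of $\chi$ to $U_{I\to J} = U_{I'\to J'} \cap \{\sigma(i_m)=j_m\}$ is — after identifying the stabilizer of $i_m\mapsto j_m$ with $S_{[n]\setminus\{i_m\}}$ acting on $[n]\setminus\{j_m\}$ — the restriction of $\chi|_{S_{n-1}} = \sum_\mu \chi_\mu$ to the sets $U_{I'\to J'}$ inside $S_{n-1}$. By the triangle inequality in $L^2$ and the inductive hypothesis applied to each $\chi_\mu$ (each of level at most $d$, with associated small partition $\tilde\mu$), $\|\chi_{I\to J}\|_2 \le \sum_\mu \|(\chi_\mu)_{I'\to J'}\|_2 \le \sum_\mu 2^{m-1}\tilde\chi_\mu(1) \le 2^{m-1}\cdot 2\tilde\chi(1) = 2^m \tilde\chi(1)$, completing the induction.

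\textbf{Main obstacle.} The step I expect to require the most care is making the identification in the previous paragraph fully rigorous: one must check that restricting $\chi$ to the coset-stabilizer and then to $U_{I'\to J'}$ really does coincide, under the natural isomorphism $\mathrm{Stab}(i_m\mapsto j_m)\cong S_{n-1}$, with $\bigl(\sum_\mu \chi_\mu\bigr)_{I'\to J'}$, and that the $2$-norm normalizations (dividing by $(n-m)!$ on one side and by $(n-1-(m-1))! = (n-m)!$ on the other) match up — they do, but this bookkeeping is the crux. A secondary subtlety is the edge case where removing a box from the first row of $\lambda$ is impossible or where $\lambda$ has level exactly $d$ but removing a box drops it; the level-monotonicity argument ($\mu_1 \ge \lambda_1 - 1$ and $\mu_1' \le \lambda_1' $, hence $\mathrm{level}(\mu)\le d$, and $\widetilde{\mu}$ obtained from $\widetilde{\lambda}$ by removing at most one box) handles all of these uniformly once stated carefully. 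Everything else is the triangle inequality and the branching-rule dimension identity $\sum_{\mu}\dim V_\mu = \dim V_\lambda$.
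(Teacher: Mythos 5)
Your induction has a genuine gap at exactly the point you flagged as "the crux": the identification of $\chi$ restricted to $U_{I\to J}$ with the branching decomposition. When $I\ne J$, the set $U_{I\to J}$ is a \emph{coset} $gU_{I\to I}$ of the subgroup $U_{I\to I}\cong S_{n-m}$, not a subgroup. Young's branching rule describes $\chi|_{S_{n-1}}$ as a function on the \emph{subgroup}; it says nothing about the values $\chi(g\sigma)$ for $\sigma$ ranging over the subgroup and $g$ a fixed nontrivial coset representative. The function $\sigma\mapsto\chi(g\sigma)$ on $\mathrm{Stab}(i_m)$ is not a sum of irreducible characters of $S_{n-1}$ (it is not even a class function of $S_{n-1}$ in general), so the equality "$\chi$ on $U_{I\to J}$ equals $\bigl(\sum_\mu\chi_\mu\bigr)_{I'\to J'}$" that your triangle-inequality step relies on is false. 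A quick sanity check: for $\chi=\chi_{(n-1,1)}$, on the subgroup $U_{1\to 1}$ one has $\chi(\sigma)=\mathrm{fix}_{[n]\setminus\{1\}}(\sigma)$, matching the branching $\chi_{(n-2,1)}+\chi_{(n-2)}$, but on the coset $U_{1\to 2}$ one has $\chi(\sigma)=\mathrm{fix}_{[n]}(\sigma)-1$ with $\sigma(1)=2$ never fixed, which is a different function. Your assertion that the bookkeeping "does match up" is where the argument breaks.

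The paper closes exactly this gap before invoking branching. It first proves $\|\chi_{I\to J}\|_2^2\le\|\chi_{I\to I}\|_2^2$, i.e.\ reduces the coset to the subgroup, by noting that $\chi^2$ is a character (of $V_\lambda\otimes V_\lambda$), decomposing it into irreducibles $\chi^2=\sum_i\chi_i$, and applying a lemma of Avni--Glazer which bounds the average of an irreducible character over a coset $gH$ by its average over $H$ itself. Only after this reduction does it apply Young's branching rule, and it does so for all $m$ boxes at once: writing $\chi|_{S_{n-m}}=\sum_\mu c_\mu\chi_\mu$, it bounds $\|\chi_{I\to I}\|_2^2=\sum_\mu c_\mu^2\le(\sum_\mu c_\mu)^2$ and counts removal processes, with the factor $2^m$ coming from choosing the set of steps that remove a first-row cell (your one-box-at-a-time accounting of $\sum_\mu\tilde\chi_\mu(1)\le 2\tilde\chi(1)$ is the same count, unrolled). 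If you want to salvage your induction, you need to import the coset-to-subgroup comparison (or some substitute for it) as a separate ingredient; the branching rule alone cannot supply it.
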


In the proof of Lemma~\ref{lem:characters are global}, we will use the following result of Avni and Glazer.
\begin{lem}[{\cite[Lemma 2.7]{avni2022fourier}}]
\label{lem:glazer coset subgroup}
    Let $G$ be a finite group, let $\chi$ be an irreducible character of $G$, and let $H \le G$ be a subgroup. Then, for every $g \in G$,
    \[
        \frac{1}{|H|} \sum_{h \in H} \chi(gh) \le \langle \chi |_H, \chi_\text{triv}\rangle_H,
    \]
    where $\chi_\text{triv}$ is the trivial character of $H$.
\end{lem}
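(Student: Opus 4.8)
The plan is to rewrite the left-hand average as the trace of $\rho(g)$ against the averaging projection of the subgroup $H$, and then bound it by the trace of a contraction acting on the fixed subspace $V^H$, whose dimension is exactly the right-hand side.

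Fix a representation $\rho\colon G\to\mathrm{GL}(V)$ with character $\chi$. Since $G$ is finite we may equip $V$ with a $G$-invariant Hermitian inner product, so that every $\rho(g)$ is unitary. Writing $\chi(gh)=\mathrm{tr}(\rho(g)\rho(h))$ and averaging over $h\in H$ gives
\[
\frac{1}{|H|}\sum_{h\in H}\chi(gh)=\mathrm{tr}\Bigl(\rho(g)\cdot\frac{1}{|H|}\sum_{h\in H}\rho(h)\Bigr)=\mathrm{tr}(\rho(g)P),\qquad P:=\frac{1}{|H|}\sum_{h\in H}\rho(h).
\]
Next I would record the standard fact that $P$ is the orthogonal projection of $V$ onto $V^H:=\{v\in V: \rho(h)v=v\ \text{for all }h\in H\}$: one checks $P\rho(h_0)=P=\rho(h_0)P$ for every $h_0\in H$, whence $P^2=P$, and $P^*=P$ because each $\rho(h)$ is unitary and $H=H^{-1}$. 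Consequently $\mathrm{tr}(P)=\dim V^H$, which by definition is the multiplicity of the trivial representation in $\chi|_H$, i.e. $\langle\chi|_H,\chi_{\mathrm{triv}}\rangle_H$.

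Finally, using $P=P^2$ and cyclicity of the trace, $\mathrm{tr}(\rho(g)P)=\mathrm{tr}(P\rho(g)P)$. The operator $A:=P\rho(g)P$ annihilates $(V^H)^\perp$ and preserves $V^H$, so $\mathrm{tr}(A)$ equals the trace of $A|_{V^H}\colon V^H\to V^H$; since $\rho(g)$ is unitary and $\|P\|\le 1$ we have $\|A|_{V^H}\|\le 1$, hence every one of the $\dim V^H$ eigenvalues of $A|_{V^H}$ has modulus at most $1$ and $|\mathrm{tr}(A)|\le\dim V^H$. Combining everything,
\[
\frac{1}{|H|}\sum_{h\in H}\chi(gh)\le\Bigl|\frac{1}{|H|}\sum_{h\in H}\chi(gh)\Bigr|=|\mathrm{tr}(\rho(g)P)|\le\dim V^H=\langle\chi|_H,\chi_{\mathrm{triv}}\rangle_H.
\]

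There is no serious obstacle; the argument is short and elementary, the crux being simply the recognition of $\frac{1}{|H|}\sum_{h\in H}\rho(h)$ as an orthogonal projection together with the trivial bound on the trace of a compression of a contraction. The only point worth a word is that the displayed argument actually controls the absolute value of $\frac{1}{|H|}\sum_{h}\chi(gh)$, which is stronger than the stated one-sided inequality (and in any case the characters relevant for the applications here are real-valued, cf. Lemma~\ref{lem:characters are global}). An alternative route avoiding the choice of an invariant inner product is to work with the idempotent $e_H=\frac{1}{|H|}\sum_{h\in H}h$ in $\mathbb{C}[G]$ and bound $\mathrm{tr}(\rho(g)\rho(e_H))$ through the compression $\rho(e_H)\rho(g)\rho(e_H)$ on the image of the rank-$(\dim V^H)$ idempotent $\rho(e_H)$, but the unitary version above is cleanest.
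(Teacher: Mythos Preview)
Your proof is correct. Note that the paper does not supply its own proof of this lemma: it is quoted verbatim from \cite[Lemma 2.7]{avni2022fourier} and used as a black box in the proof of Lemma~\ref{lem:characters are global}. The argument you give---recognizing $P=\frac{1}{|H|}\sum_{h\in H}\rho(h)$ as the orthogonal projection onto $V^H$, rewriting the average as $\mathrm{tr}(P\rho(g)P)$, and bounding the trace of a contraction on a $(\dim V^H)$-dimensional space---is the standard one and is essentially what appears in the cited source. Your observation that the argument in fact bounds the absolute value, not just the real part, is correct and worth keeping.
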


\begin{proof}[Proof of Lemma~\ref{lem:characters are global}]
    Given $m$ and two $m$-tuples $I,J$ each having distinct coordinates, we need to prove that
    \begin{equation}
    \label{eqn:proof characters are global}
        \|\chi_{I\to J}\|_2\le 2^{|I|}\tilde{\chi}(1).
    \end{equation}
    Since characters are invariant under conjugation, we may assume without loss of generality that $I=(1,\dots,m)$. The $m$-umvirate $U_{I\to I}$ is a subgroup of $S_n$ isomorphic to $S_{n-m}$. It has $U_{I\to J}$ as its coset. Choose an arbitrary permutation $g$ sending $I$ to $J$ and write $U_{I\to J} = gU_{I\to I}$.

We have 
    \[
        \|\chi_{I\to J}\|_2^2 = \frac{1}{|U_{I \to J}|} \sum_{\sigma \in U_{I \to J}} \chi^2(\sigma) = \frac{1}{(n-m)!} \sum_{\sigma \in U_{I\to I}} \chi^2(g\sigma).
    \]
   Recall that the function $\chi^2$ is a character of the symmetric group as $\chi_{\lambda}^2$ is the trace of the tensor product $V_{\lambda} \otimes V_{\lambda}$. Decomposing $\chi^2$ into irreducible characters and writing 
   $\chi^2= \sum_{i}\chi_i$
   we may apply Lemma~\ref{lem:glazer coset subgroup} to each $\chi_i$ to obtain that \[\frac{1}{(n-m)!}\sum_{\sigma \in U_{I\to I}}\chi_i(g\sigma) \le \frac{1}{(n-m)!}\sum_{\sigma \in U_{I\to I}}\chi_i(\sigma).\]
   Summing over all $i$ we obtain that $\|\chi_{I\to J}\|_2^2\le \|\chi_{I\to I}\|_2^2$. 

    Next, let us bound the $L^2(U_{I \to I})$-norm of $\chi_{I\to I}$. Clearly, it is equal to the $L^2(S_{n-m})$-norm of $\chi|_{S_{n-m}}$. We can decompose the restriction of $\chi$ to $S_{n-m}$ as a linear combination of characters and write $\chi|_{S_{n-m}} = \sum_{\mu} c_\mu \chi_\mu$, where the sum goes over all $\mu \vdash (n-m)$, and obtain that 
    \[
        \|\chi_{I\to J}\|_2^2 \le \sum_{\mu} c_\mu^2 \le \left(\sum_{\mu} c_\mu\right)^2,
    \]
    which implies that
    $\|\chi_{I\to J}\|_2 \le \sum_{\mu} c_\mu.$
    We will prove that $\sum_\mu c_\mu \le 2^m \tilde{\chi}(1)$ and our lemma will follow. 

    Recall Young's branching rule: Let $\lambda \vdash n$ and $\mu \vdash (n-m)$. The multiplicity of $\chi_\mu$ in the restriction of $\chi_\lambda$ to $S_{n-m}$ is equal to the number of valid removal processes from $\lambda$ to $\mu$, where a valid removal process is a way to remove $m$ cells from the Young diagram of shape $\lambda$ such that the resulting shape after removing the cells is $\mu$, and any intermediate shapes that are created during the removal process are also valid Young diagrams.
    
    To bound the sum of the multiplicities of all $\mu$, we count all the valid ways to remove $m$ cell from $\lambda$. We call the $i$th cell that we remove the $i$th \emph{step}. We begin counting the ways by separating them according to the set of steps in which we remove a cell from the first row of $\lambda$. There are at most $2^m$ such possible sets. Next, we fix the set $S$ of such steps. All the other steps correspond to a removal process that removes $m-|S|$ boxes from $\tilde{\lambda}$. The number of such valid ways of removing $m-|S|$ boxes is upper bounded by the number of ways to remove all the boxes of $\tilde{\lambda}$ while leaving a valid Young diagram at each step. This is exactly the dimension of $\tilde{\chi}$. We therefore obtain that $\sum_\mu c_\mu \le 2^m \tilde{\chi}(1)$, as required.
\end{proof}

\begin{proof}[Proof of Proposition~\ref{prop:characters are global}]
If $d \ge \frac{n}{10}$, then we can take $C \ge 10$ and obtain that
\[\gamma = \left(\frac{Cd}{n}\right)^d\chi(1)\ge \chi(1) = \|\chi\|_{\infty}.\]
Therefore, We may use the fact that every function $f$ is $(\|f\|_{\infty}, 2)$-global to complete the proof.

Next, suppose $d < \frac{n}{10}$. Proposition~\ref{prop:characters are global} follows as a direct consequence of Lemma~\ref{lem: dimension of characters} together with Lemma~\ref{lem:characters are global}.
\remove{We may assume without loss of generality that $d<n/10$. Otherwise, we may use the fact that every function $f$ is $(\|f\|_{\infty}, 2)$-global to complete the proof. Indeed, for $d \ge n/10$ and $C>10$ we have \[\gamma = \left(\frac{Cd}{n}\right)^d\chi(1)\ge \chi(1) = \|\chi\|_{\infty}.\]  
If $d < n/10$, then Proposition~\ref{prop:characters are global} follows as a direct consequence of Lemma~\ref{lem: dimension of characters} together with Lemma~\ref{lem:characters are global}. }
\end{proof}

We may now apply the globalness of characters to upper bound their $q$-norms.
    
\begin{proof}[Proof of Theorem~\ref{thm:main}]
The statement obviously holds for $q=2$, so we may assume that $q > 2$. Let $c>0$ be the constant from Theorem~\ref{thm:Hypercontractivity EKL}. By increasing $C$ if necessary, we may assume that $C^{\frac{1}{1-2/q}} >\frac{1}{c}$. If $d \ge cn$ then the theorem follows from the `trivial' upper bound   
\[\|\chi\|_q^q \le \chi(1)^{q-2}\|\chi\|_2^2\] and the fact that we have $\frac{C^{\frac{1}{1-2/q}}d}{n}>\frac{d}{cn}\ge 1$ and $\frac{q}{\log q} > 1$.

Next, let us assume that $d < cn$. For an arbitrary finite group $G$, it is known that if an operator $T$ on $L^2(G)$ commutes with the action of $G$ from both sides, then the irreducible characters of $G$ are eigenvectors of $T$. We therefore obtain that $T_{\rho}\chi = \langle T_{\rho}\chi, \chi\rangle \chi$ for each irreducible character $\chi.$ Let $\chi$ be a character of the symmetric group of level $d$. Our goal is to upper bound $\|\chi\|_q.$ As a character and its conjugate differ only by multiplication by the sign function, which does not affect the norm, we may assume without loss of generality that $\lambda_1=n-d.$
By \cite[Theorem 7]{ellis2011intersecting}, the characters with $\lambda_1=n-d$ are polynomials of degree $d$ in the dictators. 
By Proposition~\ref{prop:characters are global}, $\chi$ is $(2,\gamma)$-global where $\gamma = \left(\frac{C'd}{n}\right)^d\chi(1)$ for some $C'>0$. By Theorem \ref{thm:Hypercontractivity EKL} we may use the fact that $\chi$ is a polynomial of degree $d$ to obtain that there exists an absolute constant $C$, such that $\langle T_{\rho} \chi,\chi \rangle \ge \left(\frac{\rho}{C}\right)^d$, and therefore 
\[
\left(\frac{\rho}{C}\right)^d\|\chi\|_q \le \|T_{\rho}\chi \|_q \le \gamma^{1-2/q}.
\]
The theorem now follows by picking $\rho = \frac{c\log q}{2q}$ and rearranging.
\end{proof}

\subsection{Adaptation to $A_n$}\label{sec:adaptations}

Recall that the characters of $A_n$ are all obtained from characters of $S_n$ by first restricting them to $A_n$ and then decomposing them into irreducible characters when necessary.
Let us denote by $\lambda'$ the partition conjugate to $\lambda$ and denote by $\chi^{R}$ the restriction of a character $\chi$ from $S_n$ to $A_n$. We recall that when $\lambda\ne \lambda'$ the restricted character $\chi_{\lambda}^R$ is irreducible. When $\lambda =\lambda'$, the character $\chi_{\lambda}^R$ is the sum of two irreducible characters, which we denote by $\chi_{\lambda,1}$ and $\chi_{\lambda,2}$. The characters $\chi_{\lambda,1}$ and $\chi_{\lambda,2}$ have equal dimensions and therefore
\[
\chi_{\lambda,1}(1) = \chi_{\lambda,2}(1) =\frac{\chi_{\lambda}(1)}{2}. 
\]
In either case, we set the \emph{level} of the characters in the decomposition of $\chi_{\lambda}^R$ to be the level of $\chi_{\lambda}$. We also say that the characters in the decomposition of $\chi_{\lambda}^R$ are \emph{characters corresponding} to $\lambda$. The following versions of Theorem \ref{thm:EFP dimensions of representations} and Corollary \ref{cor: dimension of characters n over sqrt d} follow immediately from the above discussion. 

\begin{lem}\label{lem: dimension of characters A_n}
There exists $n_0>0$, such that the following holds. Let $n>n_0,$ $d\le n/200$ and suppose that $\chi$ is a character of $A_n$ of level at least $d$. Then $\chi(1) \ge \frac{1}{2}\left(\frac{n}{ed}\right)^d$. 
\end{lem}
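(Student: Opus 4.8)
The claim to be proven is Lemma~\ref{lem: dimension of characters A_n}, which is the $A_n$-version of Theorem~\ref{thm:EFP dimensions of representations}. The plan is to simply transport the $S_n$ bound through the restriction map $\chi_\lambda \mapsto \chi_\lambda^R$, losing only a factor of $2$ in the worst case.

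First I would fix a character $\chi$ of $A_n$ of level at least $d$, and let $\lambda \vdash n$ be a partition such that $\chi$ is a character corresponding to $\lambda$ (such a $\lambda$ exists by the discussion preceding the lemma, since every irreducible character of $A_n$ arises in the decomposition of some $\chi_\lambda^R$). By definition the level of $\chi$ equals the level of $\chi_\lambda$, so $\chi_\lambda$ has level at least $d$ as well. Since $n > n_0$ (taking $n_0$ at least as large as the threshold in Theorem~\ref{thm:EFP dimensions of representations}) and $d \le n/200$, Theorem~\ref{thm:EFP dimensions of representations} gives $\dim(\chi_\lambda) \ge \left(\frac{n}{ed}\right)^d$.

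Next I would split into the two cases from the discussion. If $\lambda \ne \lambda'$, then $\chi_\lambda^R$ is irreducible and equals $\chi$, so $\chi(1) = \chi_\lambda(1) \ge \left(\frac{n}{ed}\right)^d \ge \frac12 \left(\frac{n}{ed}\right)^d$. If $\lambda = \lambda'$, then $\chi$ is one of the two constituents $\chi_{\lambda,1}, \chi_{\lambda,2}$, each of dimension $\frac{\chi_\lambda(1)}{2}$, so $\chi(1) = \frac{\chi_\lambda(1)}{2} \ge \frac12 \left(\frac{n}{ed}\right)^d$. In either case the desired bound holds, completing the proof.

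There is essentially no obstacle here — the lemma is a routine corollary, and the only mild points to be careful about are (i) choosing $n_0$ to dominate the threshold in Theorem~\ref{thm:EFP dimensions of representations}, and (ii) invoking the definitional fact that the level of an $A_n$-character equals the level of the $S_n$-character it comes from, which is exactly how "level" was set up in Section~\ref{sec:adaptations}.
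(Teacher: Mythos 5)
Your proof is correct and is exactly the argument the paper intends: the lemma is stated as following "immediately from the above discussion," and your case split on $\lambda\ne\lambda'$ versus $\lambda=\lambda'$, combined with Theorem~\ref{thm:EFP dimensions of representations} and the definition of level for $A_n$-characters, is precisely that discussion made explicit. No issues.
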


\begin{cor}\label{cor: dimension of characters n over sqrt d A_n}
        Let $d$ be an integer and let $\chi$ be a character of $A_n$ of level $d$. Then
    \[\dim(\chi) \le \frac{n^d}{\sqrt{d!}}.\]
\end{cor}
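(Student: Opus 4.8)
The plan is to reduce the statement directly to its $S_n$ counterpart, Corollary~\ref{cor: dimension of characters n over sqrt d}. Recall from the discussion preceding the statement that every irreducible character $\chi$ of $A_n$ is a \emph{character corresponding to} some partition $\lambda \vdash n$, in the sense that $\chi$ appears in the decomposition of the restriction $\chi_\lambda^R$ of $\chi_\lambda$ to $A_n$, and that by our convention the level of $\chi$ is defined to be the level of $\chi_\lambda$. Thus, since $\chi$ has level $d$, we may fix such a $\lambda$ of level $d$.

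First I would split into the two cases described in the preamble to this subsection. If $\lambda \ne \lambda'$, then $\chi_\lambda^R$ is irreducible, so $\chi = \chi_\lambda^R$ and $\dim(\chi) = \dim(\chi_\lambda)$. If $\lambda = \lambda'$, then $\chi_\lambda^R = \chi_{\lambda,1} + \chi_{\lambda,2}$ with $\dim(\chi_{\lambda,1}) = \dim(\chi_{\lambda,2}) = \frac{1}{2}\dim(\chi_\lambda)$, so in particular $\dim(\chi) \le \dim(\chi_\lambda)$. In either case we conclude $\dim(\chi) \le \dim(\chi_\lambda)$. Then I would apply Corollary~\ref{cor: dimension of characters n over sqrt d} to the $S_n$-character $\chi_\lambda$, which has level $d$, to get $\dim(\chi_\lambda) \le \frac{n^d}{\sqrt{d!}}$, and chain the two inequalities to obtain $\dim(\chi) \le \frac{n^d}{\sqrt{d!}}$, as desired.

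There is essentially no obstacle: all the real content is already contained in the $S_n$ bound of Corollary~\ref{cor: dimension of characters n over sqrt d}, and the only point requiring any attention is to treat the self-conjugate case $\lambda = \lambda'$ correctly — but there the relevant dimension is merely halved, which only strengthens the inequality. This is precisely why the statement is said to follow immediately from the preceding discussion.
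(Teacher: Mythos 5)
Your argument is correct and is exactly the one the paper intends: the paper dismisses this corollary as following ``immediately from the above discussion,'' and the content of that discussion is precisely your case split ($\lambda \ne \lambda'$ gives equality of dimensions, $\lambda = \lambda'$ halves the dimension), followed by an appeal to Corollary~\ref{cor: dimension of characters n over sqrt d} for $\chi_\lambda$. No difference in approach, and no gaps.
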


Finally, we give a version of Theorem \ref{thm:main} for $A_n$. 

\begin{cor}\label{cor:of thm main}
There exists an absolute constant $C>0,$ such that the following holds. 
Let $d$ be a positive integer, and let $q\ge 2$. Let $\chi$ be a character of $A_n$ of level $d.$
Then 
\[
\| \chi \|_q \le \left( \frac{Cq}{\log q} \right)^d\left(\frac{d^d \chi(1)}{n^d}\right)^{1-2/q}. 
\]
\end{cor}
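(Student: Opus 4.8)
The plan is to deduce Corollary~\ref{cor:of thm main} directly from Theorem~\ref{thm:main} by relating each irreducible character of $A_n$ to the $S_n$-character it comes from. Recall that every irreducible character $\psi$ of $A_n$ of level $d$ is a summand of $\chi_\lambda^R$ for some $\lambda\vdash n$ of level $d$, so pointwise on $A_n$ we have $0\le |\psi|\le |\chi_\lambda^R| = |\chi_\lambda|$ when $\lambda\ne\lambda'$, and $\psi = \chi_{\lambda,i}$ with $\chi_{\lambda,1}+\chi_{\lambda,2} = \chi_\lambda^R$ when $\lambda=\lambda'$. In the first case $\psi$ is literally the restriction of $\chi_\lambda$ to $A_n$, so its $L^q(A_n)$-norm is controlled by that of $\chi_\lambda$ on $S_n$ up to a factor of at most $2^{1/q}\le \sqrt2$ (since $A_n$ has index $2$ in $S_n$, the average over $A_n$ of $|\chi_\lambda|^q$ is at most twice the average over $S_n$); this extra constant is harmlessly absorbed into $C$. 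Applying Theorem~\ref{thm:main} to $\chi_\lambda$ and using $\chi_\lambda(1) = \psi(1)$ then gives exactly the claimed bound.

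The only genuine point requiring care is the self-conjugate case $\lambda = \lambda'$, where $\psi(1) = \chi_\lambda(1)/2$, so naively plugging $\chi_\lambda(1) = 2\psi(1)$ into Theorem~\ref{thm:main} produces a spurious factor $2^{d(1-2/q)}$, which is \emph{not} absorbable into $C^d$ uniformly — wait, in fact $2^{d(1-2/q)}\le 2^d$, so it \emph{is} absorbable into $(C'q/\log q)^d$ after enlarging $C$; the cleaner route, though, is to note $|\psi|\le |\chi_\lambda|$ pointwise (each $\chi_{\lambda,i}$ is a nonnegative-coefficient-free summand but one has $\chi_{\lambda,1}(\sigma)+\chi_{\lambda,2}(\sigma) = \chi_\lambda(\sigma)$ and the two summands are complex conjugates of each other on the relevant classes, hence $|\chi_{\lambda,i}(\sigma)|\le |\chi_\lambda(\sigma)|$ fails in general). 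To be safe I would instead bound $\|\psi\|_{L^q(A_n)}\le \|\chi_\lambda^R\|_{L^q(A_n)} = \|\chi_\lambda|_{A_n}\|_{L^q(A_n)}\le \sqrt2\,\|\chi_\lambda\|_{L^q(S_n)}$, since $|\psi|\le |\chi_{\lambda,1}|+|\chi_{\lambda,2}|$ is false but $\|\psi\|_q\le\|\chi_{\lambda,1}\|_q+\|\chi_{\lambda,2}\|_q$ and, more simply, $\psi$ is dominated in $L^q$ by the full restricted character because $\|\chi_{\lambda,1}\|_q^q+\|\chi_{\lambda,2}\|_q^q\le \|\chi_\lambda^R\|_q^q$ would require disjoint supports — not true either. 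The robust statement actually available is $\|\chi_{\lambda,i}\|_2\le 1$ and, for $q>2$, $\|\chi_{\lambda,i}\|_q\le \|\chi_{\lambda,i}\|_\infty^{1-2/q}\|\chi_{\lambda,i}\|_2^{2/q}\le \chi_{\lambda,i}(1)^{1-2/q}$, which is too weak. I will therefore use the cleanest correct inequality: pointwise $|\chi_{\lambda,1}(\sigma)|,|\chi_{\lambda,2}(\sigma)|\le \frac12(|\chi_\lambda(\sigma)| + |\chi_{\lambda,1}(\sigma)-\chi_{\lambda,2}(\sigma)|)$ together with the Murnaghan--Nakayama-type description showing $|\chi_{\lambda,1}-\chi_{\lambda,2}|$ is supported only on the (few, small) split conjugacy classes and is bounded there; in fact the simplest honest bound, and the one I would write, is $\|\chi_{\lambda,i}\|_{L^q(A_n)}\le \|\chi_\lambda\|_{L^q(A_n)}\le \sqrt 2\|\chi_\lambda\|_{L^q(S_n)}$, which holds because $\chi_{\lambda,1},\chi_{\lambda,2}$ take conjugate values so $|\chi_{\lambda,i}|^2 = \chi_{\lambda,i}\overline{\chi_{\lambda,i}} \le \chi_\lambda^R\cdot\overline{\chi_\lambda^R}/\text{(something)}$ — I will verify this elementary inequality and then conclude.

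Granting that reduction, the rest is immediate: by Theorem~\ref{thm:main} applied to $\chi_\lambda$ on $S_n$, $\|\chi_\lambda\|_{L^q(S_n)}\le (Cq/\log q)^d(d^d\chi_\lambda(1)/n^d)^{1-2/q}$, and since $\chi_\lambda(1)\le 2\psi(1)$ in the worst (self-conjugate) case, $(d^d\chi_\lambda(1)/n^d)^{1-2/q}\le 2\,(d^d\psi(1)/n^d)^{1-2/q}$; combining with the $\sqrt 2$ loss from restriction and redefining the constant $C$ yields $\|\psi\|_q\le (C'q/\log q)^d(d^d\psi(1)/n^d)^{1-2/q}$, which is the assertion of the corollary. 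The main obstacle, as flagged above, is purely bookkeeping around the self-conjugate characters: making sure the passage $\chi_\lambda(1)\mapsto\psi(1)$ and the restriction-to-$A_n$ each cost at most a constant (per character, and a constant to the power $d$ at worst for the dimension factor), so that everything is swallowed by enlarging $C$. No new hypercontractivity input is needed beyond Theorem~\ref{thm:main} itself.
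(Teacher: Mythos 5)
Your treatment of the non-self-conjugate case ($\lambda\ne\lambda'$) is correct and is exactly the paper's argument: the restricted character is irreducible, its $L^q(A_n)$-norm is at most $2^{1/q}$ times the $L^q(S_n)$-norm of $\chi_\lambda$, and Theorem \ref{thm:main} finishes it with the constant absorbed into $C$.

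The self-conjugate case, however, contains a genuine gap. The inequality you ultimately settle on, $\|\chi_{\lambda,i}\|_{L^q(A_n)}\le \|\chi_\lambda^R\|_{L^q(A_n)}$, is false in general and you never actually prove it (the proposal ends with ``I will verify this elementary inequality and then conclude''). On a split conjugacy class (cycle type given by the diagonal hook lengths $h_1,\dots,h_k$ of $\lambda$) one has $\chi_{\lambda,i}(\sigma)=\tfrac12\bigl(\chi_\lambda(\sigma)\pm\sqrt{\pm h_1\cdots h_k}\bigr)$, so $|\chi_{\lambda,i}(\sigma)|$ can be of order $\sqrt{\prod h_i}$ while $|\chi_\lambda(\sigma)|\le 1$; the large terms cancel in the sum $\chi_{\lambda,1}+\chi_{\lambda,2}=\chi_\lambda^R$ but not in either constituent, and for $q>2$ the contribution of such a class to $\|\chi_{\lambda,i}\|_q^q$ can dominate $\|\chi_\lambda^R\|_{L^q(A_n)}^q$. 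So no pointwise or $L^q$ domination of a constituent by the restricted character is available, and this route cannot be repaired along the lines you sketch.

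The observation that resolves the case, and the one the paper uses, is that you do not need to compare with $\chi_\lambda$ at all: if $\lambda=\lambda'$ then $\lambda_1=\lambda_1'$ and $2\lambda_1-1\le n$, so the level satisfies $d\ge\frac{n-1}{2}$. For such large $d$ the right-hand side of the corollary is at least $(Cq/\log q)^d(d/n)^{d(1-2/q)}\chi(1)^{1-2/q}\ge \chi(1)^{1-2/q}$ once $C$ is a large enough absolute constant, and the interpolation bound $\|\chi\|_q\le\|\chi\|_\infty^{1-2/q}\|\chi\|_2^{2/q}=\chi(1)^{1-2/q}$ --- precisely the bound you computed and then dismissed as ``too weak'' --- already suffices. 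The estimate is only too weak when $d$ is small, which cannot happen for self-conjugate $\lambda$.
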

\begin{proof}
    Denote $\lambda'$ for the transpose of $\lambda$. We first consider characters of $A_n$ of type $\lambda$, where $\lambda \ne \lambda'$. For such $\chi$ we view $\chi^R$ also as a function on $S_n$ by extending it by 0 outside of $S_n.$  We now have \[\|\chi^{R}\|_{L^q(A_n)}^q = 2\|\chi^R\|_{L^q(S_n)}^{q} \le 2\|\chi\|_q^q .\]
We may now apply Theorem \ref{thm:main} to complete the proof of the case where $\lambda\ne \lambda'$. Note that we ignore the factor of $2$ since $C$ can be replaced by $2C$. 

Consider now the case where $\lambda = \lambda'$. Here the level of characters of type $\lambda$ is at least $\frac{n-1}{2}$, and therefore the theorem is trivially derived from the upper bound \[\|\chi\|_q^q\le \|\chi\|_2^2\|\chi\|_{\infty}^{q-2} =\chi(1)^{q-2}.\]  
\end{proof}

\section{Matching Lower bounds}\label{sec:converse}
In this section, we prove Theorems \ref{thm: lower bound few cycles}, \ref{thm: actual upper bound on character ratio} and \ref{thm:matching lower bound for main}, thereby establishing the sharpness of Theorems \ref{thm: few cycles}, \ref{thm: lower bound to actual upper bound on character ratio} and \ref{thm:main}, respectively. First, let us prove a simple consequence of the Murnaghan--Nakayama rule:
\begin{lem} \label{lem:murnaghan nakayama remove long cycles}
    Let $\lambda \vdash n$ be a partition with $\lambda_1 = n-d$, and let $\ell \ge d + \lambda_2$. Let $\mu = (\lambda_1 - n + \ell, \lambda_2, \ldots, \lambda_r)$ be the partition obtained from $\lambda$ by removing $n-\ell$ boxes from its first row. Let $\sigma \in S_n$ be a permutation with a cycle structure $1^\ell \alpha_1\cdots \alpha_k$ with $\alpha_i > d$. That is, $\sigma$ has $\ell$ fixed points, and the rest of the cycles of $\sigma$ have length at least $d+1$. Then we have $\chi_{\lambda}(\sigma) = \chi_{\mu}(1)$.
\end{lem}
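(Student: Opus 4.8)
The plan is to apply the Murnaghan--Nakayama rule repeatedly, peeling off the long cycles $\alpha_1,\dots,\alpha_k$ one at a time, and to show that at each step the \emph{only} border strip available for removal is the horizontal strip consisting of the rightmost $\alpha_j$ boxes of the first row. Since a horizontal strip has leg length $0$, every removal contributes a sign $+1$, and after all $k$ steps we are left with the shape $\mu$ evaluated at the identity permutation of $S_\ell$, i.e.\ with $\chi_\mu(1)$.

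Concretely, I would maintain the invariant that after removing the cycles $\alpha_1,\dots,\alpha_{j-1}$ the current shape is $\nu=(n-d-\sum_{i<j}\alpha_i,\lambda_2,\dots,\lambda_r)$; in particular $\nu_i=\lambda_i$ for all $i\ge 2$, so $\sum_{i\ge 2}\nu_i=n-\lambda_1=d$ throughout. Using $\sum_i\alpha_i=n-\ell$ one gets $\nu_1\ge \ell-d+\alpha_j$, while $\nu_2=\lambda_2\le \ell-d$ by the hypothesis $\ell\ge d+\lambda_2$. The key claim is then: a border strip of size $\alpha_j$ in $\nu$ must lie entirely in the first row. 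Indeed, it cannot avoid the first row, since a skew shape contained in rows $\ge 2$ has at most $\sum_{i\ge 2}\nu_i=d<\alpha_j$ boxes; and it cannot meet the first row together with any lower row, because a border strip occupies a contiguous block of rows, and if it contains boxes in rows $1$ and $2$ then connectedness together with the no-$2\times 2$-block condition force its first-row part to be exactly columns $\nu_2,\dots,\nu_1$, hence at least $\nu_1-\nu_2+1\ge \alpha_j+1$ boxes. So the border strip lies in the first row; being a border strip it is the rightmost $\alpha_j$ boxes of that row, its removal is legal precisely because $\nu_1-\alpha_j\ge \ell-d\ge \lambda_2=\nu_2$, and its leg length is $0$.

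Feeding this into the Murnaghan--Nakayama rule, the expansion of $\chi_\nu(\sigma)$ over border strips of size $\alpha_j$ collapses to the single term $(+1)\,\chi_{\nu'}(\sigma')$, where $\nu'$ is $\nu$ with $\alpha_j$ boxes removed from the first row and $\sigma'$ is $\sigma$ with the cycle of length $\alpha_j$ deleted. Iterating over $j=1,\dots,k$ (and treating the degenerate case $k=0$, where $\sigma$ is the identity and $\mu=\lambda$, separately), we reach $\chi_\lambda(\sigma)=\chi_\mu(\mathrm{id})=\chi_\mu(1)$.

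The main obstacle is the structural statement about border strips: that a border strip of the current shape which touches both the first and the second row is forced to contain the entire ``overhang'' of $\nu_1-\nu_2+1$ boxes in the first row. This is the only point where the hypotheses $\alpha_i>d$ and $\ell\ge d+\lambda_2$ are genuinely used, and it is exactly what excludes every border strip except the horizontal one; the rest is bookkeeping with the Murnaghan--Nakayama recursion.
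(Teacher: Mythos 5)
Your proof is correct and follows essentially the same route as the paper: both arguments apply the Murnaghan--Nakayama rule cycle by cycle and show that, because the boxes below the first row number only $d<\alpha_j$ while a strip meeting both of the first two rows would need at least $\nu_1-\nu_2+1\ge\alpha_j+1$ boxes, the unique removable border strip is horizontal in the first row, giving sign $+1$ at every step. Your version just makes the intermediate-shape invariant explicit where the paper packages it as an induction on $k$.
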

\begin{proof}
    Recall the Murnaghan--Nakayama rule: Let $\tau \in S_{n-\alpha_k}$ be a permutation with a cycle structure $1^\ell \alpha_1\cdots \alpha_{k-1}$. Then we have
    \[
    \chi_{\lambda}(\sigma) = \sum_{\nu} (-1)^{\htt(\nu)} \chi_{\lambda\setminus \nu}(\tau),
    \]
    where the sum is over every rim hook (i.e., a connected part of the rim which can be removed to leave a proper tableau) $\nu$ of length $\alpha_k$ in a $\lambda$-tableau, $\lambda\setminus \nu$ is the tableau obtained by removing the cells of $\nu$ from $\lambda$, and $\htt(\nu) = (\text{the number of rows of $\nu$}) - 1$.
    
    Since $\lambda_1 = n - d$ and $\alpha_k > d$, every rim hook of length $\alpha_k$ must contain at least one cell of the first row. Assume, by contradiction, that a rim hook $\nu$ contains cells from both the first and the second rows. This implies that the length of $\nu$ is at least $\lambda_1 - \lambda_2+1$. Therefore, $\alpha_k \ge \lambda_1 - \lambda_2+1 > n-d-\lambda_2$. However, $\alpha_k \le n-\ell$ and $\ell \ge d+\lambda_2$, leading to a contradiction.

    Thus, there exists a unique rim hook of length $\alpha_k$ in a $\lambda$-tableau, contained within the first row of $\lambda$. Consequently, $\chi_\lambda(\sigma) = \chi_{\bar{\lambda}}(\tau)$, where $\bar{\lambda} = (\lambda_1 - \alpha_k, \lambda_2, \ldots, \lambda_r)$. The statement follows by induction on $k$.
\end{proof}

\subsection{Proof of Theorem \ref{thm: lower bound few cycles}}
\begin{proof}[\unskip\nopunct]
Let $0 < \epsilon < 1$, let $n_0 \in \NN$ and $c>0$ to be chosen later, and let $n>n_0$. Let $\alpha\in(0,1-\frac{\log\log n}{\log n})$ and denote $\ell=\lceil\alpha n^{\alpha}\rceil$. Denote $\ell' = \ell-1$, and let $\sigma \in S_n$ be a permutation with a cycle structure $1^{\ell'}(n-\ell')$. Let $d < \min(\frac{\ell}{200},cn^\epsilon)$ and let $\chi = \chi_\lambda$ be a character with $\lambda_1=n-d$. We aim to prove
\[
    \left|\frac{\chi(\sigma)}{\chi(1)}\right|\ge\chi(1)^{\alpha-\epsilon-1}.
\]

Since $\alpha < 1-\frac{\log\log n}{\log n}$, we have $\ell < n^\alpha+1 < \frac{n}{\log n}+1 < \frac{n}{3}$ for large enough $n$. Therefore we obtain that $n-\ell' >\ell > 200d > d$. Moreover, we have $\ell' \ge 200d >d+\lambda_2$. Therefore, we may apply Lemma \ref{lem:murnaghan nakayama remove long cycles} and obtain that $\chi(\sigma)=\chi_{\mu}(1)$, where $\mu \vdash \ell'$ is the partition obtained from $\lambda$ by removing $n-\ell'$ boxes from its first row. We conclude that it suffices to show that
\[
    \chi_\mu(1)\ge\chi(1)^{\alpha-\epsilon}.
\]

Since the statement clearly holds for $\alpha \le \epsilon$, we may assume that $\alpha > \epsilon$. On one hand, by Corollary \ref{cor: dimension of characters n over sqrt d}, we have
\[
    \chi(1)\le\frac{n^{d}}{\sqrt{d!}}\le\left(\frac{n\sqrt{e}}{\sqrt{d}}\right)^{d}.
\]
One the other hand, we have $d < \frac{\ell}{200}$, implying that $d \le \frac{\ell'}{200}$. In addition, since $d=\ell'-\mu_1$ and $d \le \frac{\ell'}{200}$, we deduce that the level of $\chi_\mu$ is $d$. Moreover, since $\ell > \epsilon n^\epsilon$, we may assume that $\ell'=\ell-1$ is large enough by increasing $n_0$ if necessary, so we may apply Theorem \ref{thm:EFP dimensions of representations} and deduce that
\[
    \chi_{\mu}(1)\ge\left(\frac{\ell'}{ed}\right)^{d}.
\]
As we aim to prove that $\chi_\mu(1)\ge\chi(1)^{\alpha-\epsilon}$, let us bound $\left(\frac{\chi_{\mu}(1)}{\chi(1)^{\alpha-\epsilon}}\right)^{1/d}$:
\[
    \left(\frac{\chi_{\mu}(1)}{\chi(1)^{\alpha-\epsilon}}\right)^{1/d}\ge d^{0.5(\alpha-\epsilon)-1}e^{-0.5(\alpha-\epsilon)-1}\frac{\ell'}{n^{\alpha-\epsilon}} \ge e^{-2}\frac{n^\epsilon}{d}\cdot \frac{\ell'}{n^{\alpha}}.
\]
Notably, we have $\frac{n^\epsilon}{d} > c^{-1}$. Moreover, we have $\frac{\ell}{n^\alpha} \ge \alpha > \epsilon$, implying that $\frac{\ell'}{n^\alpha} > \epsilon-n^{-\epsilon}$. For large enough $n$, we obtain $\frac{\ell'}{n^\alpha} > \frac{\epsilon}{2}$. Therefore we obtain
\[
    \left(\frac{\chi_{\mu}(1)}{\chi(1)^{\alpha-\epsilon}}\right)^{1/d} \ge 0.5e^{-2}c^{-1}\epsilon = 1
\]
for $c = 0.5e^{-2}\epsilon$, as required.
\end{proof}

\subsection{Proof of Theorem \ref{thm: lower bound to actual upper bound on character ratio}}
\begin{proof}[\unskip\nopunct]
Let $c>0$ to be chosen later, let $\alpha \in (0,1)$, and let $n \in \NN$ be a sufficiently large integer. We denote $\ell = \lceil \alpha n^\alpha \rceil$, and pick a permutation $\sigma \in S_n$ to be a permutation with a cycle structure $1^\ell (n-\ell)$. Notably, the size of the centralizer of $\sigma$ is 
\[
    \ell!\cdot(n-\ell)\le\ell^{\ell}\cdot n\le n^{\alpha n^{\alpha}+2}\le n^{2\alpha n^{\alpha}}
\]
for sufficiently large $n$. By orbit-stabilizer theorem, we obtain that the density of the conjugacy class of $\sigma$ is at least $n^{-2 \alpha n^{\alpha}}$.

Let $\chi$ be a character of level $d \le 0.5\alpha n^{\alpha}$, and denote by $\lambda \vdash n$ the corresponding partition.Let us bound its character ratios. Clearly, $d \le \ell/2$. Moreover, from $\ell = \lceil \alpha n^\alpha \rceil$ and $\alpha<1$ we obtain that $\ell<n/2$ for sufficiently large $n$, implying that $n-\ell>\ell\ge d$. Therefore, we may apply Lemma \ref{lem:murnaghan nakayama remove long cycles} and obtain $\chi(\sigma) = \chi_{\mu}(1)$, where $\mu$ is the partition obtained from $\lambda$ by removing $n-\ell$ boxes from its first row.

By Lemmas \ref{lem: dimension of characters} and \ref{lem: dimension of characters converse} we have
\[\chi(\sigma) = \chi_{\mu}(1)\ge \binom{\ell-d}{d} \chi_{\tilde{\lambda}}(1)\ge \frac{\binom{\ell-d}{d}}{\binom{n}{d}} \chi(1)\ge \left(\frac{c\ell}{n}\right)^d \chi(1)\]
for an absolute constant $c$. Therefore, we obtain
\[
    \left|\frac{\chi(\sigma)}{\chi(1)}\right|\ge\left(\frac{c\ell}{n}\right)^{d}\ge\left(\frac{c\alpha n^{\alpha}}{n}\right)^{d}=n^{d(\alpha-1)}\left(c\alpha\right)^{d}.
\]
\end{proof}

\subsection{Proof of Theorem \ref{thm:matching lower bound for main}}
First, we prove a probabilistic proposition that will be helpful
later:
\begin{proposition} \label{prop:probability no short cycles}
    Let $d$ and let $r>d$. Let $F_{r,d}$ be the event that a random permutation $\sigma\sim S_r$ has all its cycles of length $>d$. Then $\Pr[F_{r,d}]\ge \frac{1}{10d}$.
\end{proposition}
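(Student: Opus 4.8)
The plan is to prove the slightly cleaner statement that $\Pr[F_{r,d}] \ge \frac{1}{10d}$ holds for \emph{every} integer $r \ge d+1$, by induction on $r$, after first extracting a recursion. Write $p_r := \Pr[F_{r,d}]$; note that $p_0 = \Pr[F_{0,d}] = 1$ (the empty permutation vacuously has no short cycle) and $p_m = \Pr[F_{m,d}] = 0$ for $1 \le m \le d$ (every cycle of a permutation of $[m]$ has length $\le m \le d$). Conditioning a uniform $\sigma \sim S_r$ on the length $L$ of the cycle through the element $1$, and invoking the two standard facts that $L$ is uniform on $\{1,\dots,r\}$ and that, given $L = \ell$, the restriction of $\sigma$ to the remaining $r-\ell$ points is a uniform element of $S_{r-\ell}$, the event $F_{r,d}$ is exactly ``$L > d$ and the restriction lies in $F_{r-\ell,d}$''. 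This yields, for all $r > d$,
\[
p_r \;=\; \frac{1}{r}\sum_{\ell=d+1}^{r} p_{r-\ell} \;=\; \frac{1}{r}\sum_{j=0}^{r-d-1} p_j .
\]

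The induction then runs as follows. For $d+1 \le r \le 2d+1$, every index $j$ with $1 \le j \le r-d-1$ satisfies $j \le d$, so $p_j = 0$ and the sum collapses to the single term $p_0 = 1$; hence $p_r = 1/r \ge \frac{1}{2d+1} \ge \frac{1}{10d}$ (using $d \ge 1$), which in particular settles the base case $r = d+1$. For $r \ge 2d+2$, split the sum as $\sum_{j=0}^{r-d-1} p_j = 1 + \sum_{j=d+1}^{r-d-1} p_j$; every index $j$ in the second sum satisfies $d < j < r$, so it lies in the range where the inductive hypothesis already gives $p_j \ge \frac{1}{10d}$, and since there are exactly $r - 2d - 1 \ge 1$ such indices,
\[
p_r \;\ge\; \frac{1}{r}\Bigl(1 + \frac{r-2d-1}{10d}\Bigr).
\]
Clearing denominators, the right-hand side is at least $\frac{1}{10d}$ precisely when $10d - 2d - 1 \ge 0$, i.e.\ whenever $d \ge 1$, which closes the induction.

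The argument is elementary and I do not expect a genuine obstacle; the only thing that needs care is the bookkeeping around the recursion — fixing the conventions $p_0 = 1$ and $p_m = 0$ for $1 \le m \le d$ correctly, and checking that all indices appearing in the inductive sum lie in $\{d+1,\dots,r-1\}$ so that the hypothesis applies. If one prefers to avoid even the mild recursion, an alternative is to express $p_r$ as the coefficient of $x^r$ in $\frac{1}{1-x}\exp\bigl(-\sum_{k=1}^{d} x^k/k\bigr)$ and estimate it directly, but the inductive route above is shorter and self-contained.
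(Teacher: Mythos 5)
Your proof is correct and follows essentially the same route as the paper's: the same recursion obtained by conditioning on the length of one distinguished cycle, followed by the same induction with the same $\frac{r-2d-1}{10dr}$ bookkeeping (the paper simply takes $d+1\le r\le 10d$ as its base range and reads $p_r\ge 1/r$ off the recursion directly). The only differences are cosmetic — your explicit conventions $p_0=1$, $p_m=0$ for $1\le m\le d$ make the index range of the recursion slightly cleaner.
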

\begin{proof}
The distribution of the cycle decomposition of a random permutation in $S_r$ is identical to the distribution of a random sequence $(i_1,i_2,\dots)$ obtained by the following process: First choose a random $i_1\sim [r]$, then repeat the process recursively with $r-i_1$ in place of $r$ to obtain $(i_2,\dots)$. This follows easily from the fact that the length of the cycle of the element $r$ is uniform, and that when removing its cycle we get a uniformly random permutation on the remaining elements. 
This yields the recursive formula \[ \Pr[F_{r,d}] = \frac{1}{r} + \sum_{i=d+1}^{r-d-1}\frac{1}{r}\Pr[F_{r-i,d}].\]

We now show that $\Pr[F_{r,d}]\ge \frac{1}{10d}$ by induction on $r> d$ for a fixed $d$. For $d+1 \le r \le 10d$ the claim is obvious, since $\Pr[F_{r,d}] \ge \frac{1}{r}$ by the recursive formula. Let $r > 10d$. We may apply the induction hypothesis to obtain that
\[
    \Pr[F_{r,d}]\ge \frac{1}{r} + \sum_{i=d+1}^{r-d-1} \frac{1}{10dr} = \frac{1}{r}+\frac{r-2d-1}{10dr}>\frac{1}{10 d}.
\]
\end{proof}

\begin{proof}[Proof of Theorem \ref{thm:matching lower bound for main}]
Let $d < \min(e^{cq},\frac{n}{q+1})$ and let $\chi_{\lambda}$ be a character of level $d$. As multiplication by sign does not affect the $q$-norm we may assume without loss of generality that $\lambda_1 = n-d$. Pick some positive integer $\ell \ge 2d$ that will be determined later. Let $\sigma$ be a permutation with $\ell$ fixed points such that rest of the cycles of $\sigma$ have length at least $d+1$. Let $\mu = (\lambda_1 - n + \ell, \lambda_2, \ldots, \lambda_r)$ be obtained from $\lambda$ by removing $n-\ell$ boxes from its first row. As usual, let $\tilde{\lambda}$ be obtained from $\lambda$ by deleting its first row. Since $\lambda_2 \le d$, we may apply Lemma \ref{lem:murnaghan nakayama remove long cycles} and obtain $\chi_{\lambda}(\sigma) = \chi_{\mu}(1)$.
By Lemmas \ref{lem: dimension of characters} and \ref{lem: dimension of characters converse} we have
\[\chi_{\mu}(1)\ge \binom{\ell-d}{d} \chi_{\tilde{\lambda}}(1)\ge \frac{\binom{\ell-d}{d}}{\binom{n}{d}} \chi_{\lambda}(1)\ge \left(\frac{c\ell}{n}\right)^d \chi_{\lambda}(1)\]
for an absolute constant $c$. Let $E$ be the event that a random $\sigma\sim S_n$ has $\ell$ fixed points and all other cycles of length $>d$. Then we may 
obtain that
\[
\| \chi_{\lambda} \|_q^q \ge \Pr[E] \chi^q_\mu(1) \ge \Pr[E] \left(\frac{c\ell}{n}\right)^{dq}\chi^q_{\lambda}(1). 
\]

Let $F_{r,d}$ be the event that a random permutation $\sigma\sim S_r$ has all its cycles of length $>d$. Proposition~\ref{prop:probability no short cycles} implies that whenever $n-\ell > d$, we have $\Pr[F_{n-\ell,d}]\ge \frac{1}{10d}$. Therefore,
\begin{eqnarray*}
\Pr[E] &=& \binom{n}{\ell}\frac{1}{n(n-1)\cdots (n-\ell +1)}\Pr[F_{n-\ell,d}] \\
       &=& \frac{1}{\ell!}\Pr[F_{n-\ell,d}] \ge \frac{1}{10d \ell!} \ge \frac{1}{ (c\ell)^\ell d}
\end{eqnarray*}
for an absolute constant $c$.

\remove{It therefore remains to lower bound $\Pr[F_{n-l}]$. 
The distribution of the cycle decomposition of a random permutation in $S_r$ is identical to the distribution of a random sequence $(i_1,i_2,\dots)$ obtained by the following process: First choose a random $i_1\sim [r]$, then repeat the process recursively with $r-i_1$ in place of $r$ to obtain $(i_2,\dots)$. This follows easily from the fact that the length of the cycle of the element $r$ is uniform, and that when removing its cycle we get a uniformly random permutation on the remaining elements. 

This yields the recursive formula \[ \Pr[F_r] = \frac{1}{r} + \sum_{i=d+1}^{r-d-1}\frac{1}{r}\Pr[F_{r-i}].\]
We now show that $\Pr[F_r]\ge \frac{1}{10d}$ by induction on $d$ for all $r> d$. For $r \in (d+1,10 d)$ the claim is obvious, since $\Pr[F_r] \ge \frac{1}{r}$ by the recursive formula. We now apply the induction hypothesis to obtain that 
\[
    \Pr[F_r]\ge \frac{1}{r} + \sum_{i=d+1}^{r-d-1} \frac{1}{10dr} = \frac{1}{r}+\frac{r-2d-1}{10dr}>\frac{1}{10 d}.
\]

This shows that whenever $n-\ell >d$, we have 
\[\Pr[E]\ge \frac{1}{10d \ell!} \ge \frac{1}{ (c\ell)^\ell d}\]
for an absolute constant $c$.}
Substituting $\ell=\frac{qd}{\log (qd)},$ while noting that $\ell\ge 2d$ and $n - \ell > d$ provided that $d < \min(e^{cq},\frac{n}{q+1})$, we obtain  
\begin{eqnarray*}
\|\chi_{\lambda}\|_q 
&\ge& \left(\frac{c^2qd}{\log (qd)}\right)^d\frac{\chi_\lambda(1)}{n^{d}} \\
&=& \left(\frac{c^2q}{\log(qd)}\right)^d\frac{d^d\chi_{\lambda}(1)}{n^d} \\
&\ge& \left(\frac{c'q}{\log(qd)}\right)^d \left(\frac{d^d\chi_{\lambda}(1)}{n^d}\right)^{1-2/q}
\end{eqnarray*}
for an absolute constant $c'$. The last inequality follows from the fact that $\chi_\lambda(1) \ge \binom{n-d}{d},$ which yields
$\left(\left(\frac{d}{n}\right)^d\chi_{\lambda}(1)\right)^{2/q} \ge \left(\frac{1}{3}\right)^{d}$.
\end{proof}

\section{Upper bounds on Kronecker coefficients}\label{sec:Kronecker}
\begin{proof}[Proofs of Corollaries \ref{cor: Kronecker coefficients} ,\ref{cor: Assymetric Kronecker coefficients 1}, and \ref{cor: Assymetric Kronecker coefficients 2}, ] The proofs of Corollaries \ref{cor: Kronecker coefficients} ,\ref{cor: Assymetric Kronecker coefficients 1}, and \ref{cor: Assymetric Kronecker coefficients 2} share a common framework, where we make use of Theorem \ref{thm:main}. By applying Theorem \ref{thm:main} in different ways, we can establish the validity of each statement.

Corollary \ref{cor: Kronecker coefficients} follows directly from Theorem \ref{thm:main} by applying the generalized H\"{o}lder inequality to obtain \[\mathbb{E}[\chi_{\lambda}\chi_{\mu} \chi_{\nu}] \le \|\chi_{\lambda}\|_3\|\chi_{\mu}\|_3\|\chi_{\nu}\|_3.\]
Corollary \ref{cor: Assymetric Kronecker coefficients 1} follows similarly when applying the generalized H\"{o}lder inequality with different parameters to obtain 
\[
\mathbb{E}[\chi_{\lambda}\chi_{\mu} \chi_{\nu}]\le \|\chi_{\nu}\|_2 \|\chi_\mu\|_4 \| \chi_{\lambda}\|_4.
\]
In Corollaries \ref{cor: Kronecker coefficients} and \ref{cor: Assymetric Kronecker coefficients 1} we also apply Corollary~\ref{cor: dimension of characters n over sqrt d} to obtain the second inequality.

Finally, for Corollary~\ref{cor: Assymetric Kronecker coefficients 2} we apply the generalized H\"{o}lder inequality to obtain that 
\[
\mathbb{E}[\chi_{\lambda}\chi_{\mu} \chi_{\nu}]\le \|\chi_{\lambda}\|_q\|\chi_{\mu}\|_{2/(1-1/q)}\|\chi_{\nu}\|_{2/(1-1/q)},
\]
where $q=\frac{\log(\chi_\mu(1)\chi_{\nu}(1))}{d}$.
We may upper bound $\|\chi_{\lambda}\|_{q}$ via Theorem \ref{thm:main}. In order to bound $\|\chi_{\mu}\|_{2/(1-1/q)}$, notice that
\[ \|\chi_{\mu}\|_{\frac{2}{1-1/q}}^{\frac{2}{1-1/q}} = \mathbb{E}\left[|\chi_{\mu}|^{\frac{2}{1-1/q}}\right] \le \|\chi_{\mu}\|_2^2\chi_{\mu}(1)^{\frac{2}{1-1/q}-2} = \chi_{\mu}(1)^{\frac{2}{q-1}}.\]
Therefore, $\|\chi_{\mu}\|_{\frac{2}{1-1/q}} \le \chi_{\mu}(1)^{\frac{1}{q}} \le e^d$. A similar bound holds for $\chi_{\nu}$.
Combining these inequalities completes the proof.
\end{proof}

\section{Character ratios and Fourier coefficients}\label{sec:Fourier}
In this section we prove our upper bound on Fourier coefficients of functions in terms of their $2$-norm and $1$-norm (Theorem \ref{thm:Upper bounds on Fourier coefficients}). We then deduce our upper bounds on the character ratios of conjugacy classes with few cycles (Theorem \ref{thm: few cycles}). 

\begin{proof}[Proof of Theorem \ref{thm:Upper bounds on Fourier coefficients}]
Let $q \ge 2$ to be chosen later, and let $q'$ be its H\"{o}lder conjugate. Then we have $|\langle f , \chi \rangle|\le \|f\|_{q'}\|\chi\|_q.$ We may then apply the log-convexity of $L^p$-norms to obtain that \[\|f\|_{q'}\le \|f\|_1^{1-\theta}\|f\|_2^{\theta} = \|f\|_1\left(\frac{\|f\|_2}{\|f\|_1}\right)^{\theta} \le \|f\|_1 M^\theta,\] where $1-\frac{1}{q} = \frac{1}{q'} = \frac{\theta}{2} +\frac{1-\theta}{1}.$ Rearranging, we obtain that 
$q=\frac{2}{\theta}.$ Choosing $\theta = \frac{d}{ \log M}$,
we obtain that  
$
M^{\theta} \le e^d
$
and $q=\frac{2 \log M}{d} \ge 2$. The theorem now follows by plugging in Theorem~\ref{thm:main} to obtain that 
\[\|\chi\|_q \le \left(\frac{Cq}{\log q}\right)^d \left( \frac{d^d\chi(1)}{n^d}\right)^{1-2/q}\le \left(\frac{2Cq}{\log q}\right)^d\left( \frac{d^d\chi(1)}{n^d}\right).\]
Plugging everything together we obtain that
\[\langle f, \chi \rangle\le  \chi(1)\|f\|_1\left(\frac{4e C \log M}{n \log \left(\frac{2 \log M}{d}\right)} \right)^d, \]
which completes the proof with $4eC$ instead of $C$. 
\end{proof}

\remove{
When the condition $d \le \frac{\log(\|f\|_2/\|f\|_1)}{4}$ is not satisfied, we still have the following bound:

\begin{thm}
There exists an absolute constant $C>0$, such that the following holds. Let $\chi$ be a character of $S_n$ of level $d \le n/10$ and $d \ge \frac{2\log(\|f\|_2/\|f\|_1)}{3}$. 
 Then 
\[
|\langle f , \chi \rangle| \le  \chi(1)\|f\|_1\left(\frac{Cd}{n} \right)^d. 
\]
\end{thm}
\begin{proof}
Let $q = 3$, and let $q' = \frac{3}{2}$ be its H\"{o}lder conjugate. Then we have $|\langle f , \chi \rangle|\le \|f\|_{q'}\|\chi\|_q.$ We may then apply the log-convexity of $L^p$-norms to obtain that $\|f\|_{q'}\le \|f\|_1^{1-\theta}\|f\|_2^{\theta} = \|f\|_1\left(\frac{\|f\|_2}{\|f\|_1}\right)^{\theta} ,$ where $1-\frac{1}{q} = \frac{1}{q'} = \frac{\theta}{2} +\frac{1-\theta}{1}.$ Rearranging, we obtain that 
$q=\frac{2}{\theta}$, so $\theta=\frac{2}{3}$. Since $\theta \le \frac{d}{\log(\|f\|_2/\|f\|_1)}$, 
we obtain that
$
\left(\frac{\|f\|_2}{\|f\|_1}\right)^{\theta} \le e^d.
$
The theorem now follows by plugging in Theorem~\ref{thm:main} to obtain that
\[\|\chi\|_q \le \left(\frac{Cq}{\log q}\right)^d \left( \frac{d^d\chi(1)}{n^d}\right)^{1-2/q}\le C^d\left( \frac{d^d\chi(1)}{n^d}\right).\]
Plugging everything together we obtain that 
\[\langle f, \chi \rangle\le  \chi(1)\|f\|_1\left(\frac{e Cd}{n} \right)^d, \]
which completes the proof with $eC$ instead of $C$. 
\end{proof}
}

We now deduce Theorem \ref{thm: actual upper bound on character ratio}.
\begin{proof}[Proof of Theorem \ref{thm: actual upper bound on character ratio}] Let $C$ be the conjugacy class of $\sigma,$ and write $f=\frac{1_C}{\mu(C)}$. Since \[\frac{\|f\|_2}{\|f\|_1} = \frac{1}{\sqrt{\mu(C)}} \le n^{\alpha n^{\alpha}}\] and $d \le \alpha n^{\alpha} \log n$, we may apply Theorem~\ref{thm:Upper bounds on Fourier coefficients} with $M=n^{\alpha n^{\alpha}}$ and obtain that
\[
    |\chi(\sigma)| = |\langle f, \chi \rangle| \le \chi(1) \left(\frac{C \alpha n^{\alpha} \log n}{n\left(\log (\alpha n^{\alpha} \log n)-\log d\right)} \right)^d
\]
for some constant $C>0$. This yields that 
\[
\left| \frac{\chi(\sigma)}{\chi(1)}\right|\le  n^{d(\alpha -1)}\left(\frac{C \alpha \log n}{\log (\alpha n^{\alpha} \log n)-\log d}\right)^{d}.
\]
\end{proof}
We now present several technical lemmas that will be applied later to prove Theorem \ref{thm: few cycles} and to analyze mixing times.
The following lemma simplifies Theorem \ref{thm:Upper bounds on Fourier coefficients} by breaking it down into three regimes, making it easier to apply later.
\begin{lem}\label{cor: Dividing our upper bound into regimes}
    \remove{There exist absolute constants $C,c$, such that the following holds. Let $f\colon S_n \to \mathbb{R}$ be a function normalized such that $\|f\|_1=1$ and $\|f\|_2\le e^{cn}$. Let $d$ be a natural number and let $\chi$ be a character of level $d$.}
    There exists an absolute constant $C$ such that the following holds. Let $G$ be either $A_n$ or $S_n$, and let $f\colon G \to \mathbb{R}$ be a function with $\|f\|_1=1$. Suppose that $d$ is a positive integer and let $\chi$ be a character of $G$ of level $d$.
    Then the following holds.
    \begin{enumerate}
        \item If $d\le \log^{0.9}\|f\|_2$ and $\|f\|_2>e$, then
        \[
        \frac{|\langle f, \chi \rangle|}{\chi(1)} \le \left(\frac{C\log\|f\|_2}{n\log\log\|f\|_2}\right)^d. 
        \]
        \item If $d \le \log \|f\|_2$ then 
        \[
        \frac{|\langle f, \chi \rangle|}{\chi(1)} \le \left(\frac{C\log\|f\|_2}{n}\right)^d.
        \]
        \item Finally, if $d \ge \log \|f\|_2$ then
        \[
        \frac{|\langle f, \chi \rangle|}{\chi(1)}\le \frac{\|f\|_2}{\chi(1)}.
        \]
    \end{enumerate}
    \remove{Then in the regime where $d\le \log^{0.9}\|f\|_2$  we have 
    \[
    \frac{|\langle f, \chi \rangle|}{\chi(1)} \le \left(\frac{C\log\|f\|_2}{n\log\log\|f\|_2}\right)^d. 
    \]
    In the regime where $\log^{0.9}\|f\|_2 < d \le \frac{\log \|f\|_2}{4}$ we have 
    \[ \frac{|\langle f, \chi \rangle|}{\chi(1)} \le \left(\frac{C\log\|f\|_2}{n}\right)^d \]
    Finally, in the regime where $d>\frac{1}{4}\log\|f\|_2$ we have 
    \[ \frac{|\langle f, \chi \rangle|}{\chi(1)}\le \frac{\|f\|_2}{\chi(1)}.\]}
\end{lem}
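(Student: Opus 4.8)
The plan is to derive all three regimes from Theorem~\ref{thm:Upper bounds on Fourier coefficients} by choosing the bound $M$ appropriately and tracking the resulting expressions. Throughout write $N = \|f\|_2$ (recall $\|f\|_1 = 1$), so that the hypothesis $\frac{\|f\|_2}{\|f\|_1} \le M$ of Theorem~\ref{thm:Upper bounds on Fourier coefficients} is just $N \le M$; we will always take $M = N$, provided $N > 1$ so that $\log M > 0$ makes sense. Note that Theorem~\ref{thm:Upper bounds on Fourier coefficients} is stated for $S_n$, but it also holds for $A_n$ by the version referenced there (Theorem~\ref{thm:Upper bounds on Fourier coefficients An}), so the case split on $G$ costs us nothing.

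\textbf{Regime 3} ($d \ge \log N$) is the easiest and does not use Theorem~\ref{thm:Upper bounds on Fourier coefficients} at all: by Cauchy--Schwarz, $|\langle f, \chi\rangle| \le \|f\|_1 \|\chi\|_\infty^{1/2}\cdots$ — more directly, $|\langle f,\chi\rangle| \le \|f\|_1 \|\chi\|_\infty$ is false in general, but $|\langle f, \chi\rangle| \le \|f\|_2\|\chi\|_2 = \|f\|_2$ since $\|\chi\|_2 = 1$. Hence $\frac{|\langle f,\chi\rangle|}{\chi(1)} \le \frac{\|f\|_2}{\chi(1)}$, which is exactly the claim. (If $N \le 1$ then $|\langle f,\chi\rangle|\le \|f\|_2\le 1 \le \chi(1)$ and all three bounds hold trivially with room to spare, so we may assume $N>1$, in fact $N>e$ in regimes 1--2.)

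\textbf{Regimes 1 and 2.} Here we invoke Theorem~\ref{thm:Upper bounds on Fourier coefficients} with $M = N = \|f\|_2$. Its hypothesis requires $d \le \log M = \log N$, which is exactly the standing assumption in regimes 1 and 2, so we get
\[
\frac{|\langle f, \chi\rangle|}{\chi(1)} \le \|f\|_1 \left(\frac{C_0 \log N}{n \log(\log N / d)}\right)^d = \left(\frac{C_0 \log N}{n \log(\log N / d)}\right)^d,
\]
where $C_0$ is the constant of Theorem~\ref{thm:Upper bounds on Fourier coefficients}. For regime~2 ($d \le \log N$) we simply bound $\log(\log N/d) \ge \log 1 = 0$ — wait, that is useless; instead observe that we want the weaker bound $\left(\frac{C\log N}{n}\right)^d$, so it suffices to note $\log(\log N/d) \ge$ some absolute positive constant is \emph{not} guaranteed when $d$ is close to $\log N$. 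The correct move: when $d \le \tfrac12 \log N$ we have $\log N/d \ge 2$ so $\log(\log N/d)\ge \log 2$, giving regime~2 with $C = C_0/\log 2$; and when $\tfrac12\log N < d \le \log N$ we instead use the Cauchy--Schwarz bound $\frac{|\langle f,\chi\rangle|}{\chi(1)} \le \frac{\|f\|_2}{\chi(1)} = \frac{N}{\chi(1)}$ and compare it to $\left(\frac{C\log N}{n}\right)^d$: since $\chi(1) \le \frac{n^d}{\sqrt{d!}} \le (n)^d$ (Corollary~\ref{cor: dimension of characters n over sqrt d}, also Corollary~\ref{cor: dimension of characters n over sqrt d A_n} for $A_n$), it is enough that $N \le (C\log N)^d$, which holds for a suitable absolute $C$ because $d > \tfrac12\log N$ forces $(C\log N)^d \ge (C\log N)^{\frac12\log N} = e^{\frac12 \log N \log(C\log N)} \ge e^{\log N} = N$ once $\log(C\log N)\ge 2$, i.e. once $N$ is larger than an absolute constant (and for smaller $N$ one checks directly or absorbs into $C$). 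For regime~1 ($d \le \log^{0.9} N$, $N > e$) we refine the denominator: $\log N / d \ge \log N / \log^{0.9}N = \log^{0.1} N$, hence $\log(\log N/d) \ge 0.1 \log\log N$, and plugging this into the displayed bound yields $\frac{|\langle f,\chi\rangle|}{\chi(1)} \le \left(\frac{10 C_0 \log N}{n \log\log N}\right)^d$, which is regime~1 with $C = 10 C_0$.

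\textbf{Main obstacle.} There is no deep obstacle; the statement is a bookkeeping reorganization of Theorem~\ref{thm:Upper bounds on Fourier coefficients}. The only mildly delicate point is the transition zone $\tfrac12\log N < d \le \log N$ in regime~2, where the $\log(\log N/d)$ denominator in Theorem~\ref{thm:Upper bounds on Fourier coefficients} degenerates (it can be as small as $\log 1 = 0$), so one cannot simply drop it; the fix is to switch to the trivial Cauchy--Schwarz bound there and use the dimension bound $\chi(1)\le n^d/\sqrt{d!}$ to absorb the factor $N$ into $(C\log N)^d$. One should also double-check the edge cases $N \le e$ (regimes 1--2 are vacuous or follow from regime~3's argument) and confirm that the $A_n$ statements of the dimension bounds and of Theorem~\ref{thm:Upper bounds on Fourier coefficients} are indeed available, which they are by the adaptation in Section~\ref{sec:adaptations} and the remark following Theorem~\ref{thm:Upper bounds on Fourier coefficients}.
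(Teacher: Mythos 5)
Your overall strategy is exactly the paper's: regimes 1 and 2 are obtained from Theorem~\ref{thm:Upper bounds on Fourier coefficients} applied with $M=\|f\|_2$ (and its $A_n$ analogue, Theorem~\ref{thm:Upper bounds on Fourier coefficients An}), and regime 3 is Cauchy--Schwarz. Regimes 1 and 3 are handled correctly, and you rightly flagged the one delicate point that the paper's one-line proof glosses over: as $d\to\log\|f\|_2$ the denominator $\log(\log M/d)$ in Theorem~\ref{thm:Upper bounds on Fourier coefficients} degenerates, so regime 2 is not literally immediate from that theorem's statement. (The cleanest repair is to note that the proof of Theorem~\ref{thm:Upper bounds on Fourier coefficients} actually establishes the stronger bound with $\log\!\left(\frac{2\log M}{d}\right)$ in the denominator, which stays $\ge\log 2$ throughout $d\le\log M$; also note that the theorem is stated for class functions, whereas the lemma allows arbitrary $f$ --- harmless, since the theorem's proof uses only H\"older and log-convexity, but worth a word.)

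However, your patch for the transition zone $\tfrac12\log N<d\le\log N$ uses the dimension bound in the wrong direction. To deduce $\frac{N}{\chi(1)}\le\left(\frac{C\log N}{n}\right)^d$ you need a \emph{lower} bound on $\chi(1)$; the implication ``$\chi(1)\le n^d/\sqrt{d!}$ and $N\le(C\log N)^d$ imply $N/\chi(1)\le(C\log N/n)^d$'' is false, since the upper bound on $\chi(1)$ only shows that both sides of the desired inequality are at most $(C\log N)^d$. The step is salvageable: for $d\le n/200$ (and $n$ large) Theorem~\ref{thm:EFP dimensions of representations} gives $\chi(1)\ge(n/(ed))^d$, hence $\frac{N}{\chi(1)}\le N\left(\frac{ed}{n}\right)^d\le N\left(\frac{e\log N}{n}\right)^d$, and $N\le(C/e)^d$ holds because $d>\tfrac12\log N$ once $C$ is a large absolute constant; for $d>n/200$ one has $\frac{C\log N}{n}\ge\frac{Cd}{n}>\frac{C}{200}\ge e^2$, so $\left(\frac{C\log N}{n}\right)^d\ge e^{2d}\ge N\ge N/\chi(1)$. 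With that correction (or by invoking the sharper internal form of Theorem~\ref{thm:Upper bounds on Fourier coefficients}) your argument is complete and coincides with the intended one.
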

\begin{proof}
    The result for the first two regimes follows immediately from Theorem \ref{thm:Upper bounds on Fourier coefficients} with $M= \|f\|_2$. The result for the third regime follows from Cauchy--Schwarz.
    \end{proof}

In the third regime we obtain the following.
\begin{lem}\label{lem: largest degree}
For each $\epsilon>0$ there exist $c,C>0$ such that the following holds. Let $\alpha\in \left( \frac{1}{c\log n}, 1 - \frac{C\log \log n}{\log n}\right)$, and let $f\colon S_n \to \mathbb{R}$ be a function with $\|f\|_1=1$ and  \[\|f\|_2\le n^{c\alpha n^{\alpha}}.\] Let $d\ge \log\|f\|_2$ be a positive integer, and let $\chi$ be a character of level $d$. Then 
\[ \frac{\left|\langle f, \chi\rangle\right|}{\chi(1)} \le \max(\epsilon^{d}, 2^{-n^{3/5}})\chi(1)^{\alpha -1}.
\]
\end{lem}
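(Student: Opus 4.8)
The plan is to reduce to a comparison of $\|f\|_2$ with $\chi(1)^{\alpha}$ and then to split according to the size of $d$, matching the two branches of the $\max$. (We may assume $\epsilon\le\tfrac12$, since decreasing $\epsilon$ only strengthens the conclusion.) Since $\|\chi\|_2=1$, Cauchy--Schwarz --- i.e.\ the third regime of Lemma~\ref{cor: Dividing our upper bound into regimes} --- gives $\frac{|\langle f,\chi\rangle|}{\chi(1)}\le\frac{\|f\|_2}{\chi(1)}$, so it suffices to show
\[
\|f\|_2\le\max(\epsilon^{d},2^{-n^{3/5}})\,\chi(1)^{\alpha}.
\]
The two bounds on $\|f\|_2$ at our disposal are $\|f\|_2\le e^{d}$ (from $d\ge\log\|f\|_2$) and $\log\|f\|_2\le c\alpha n^{\alpha}\log n$ (from the hypothesis), and we will use whichever is stronger in the regime at hand. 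For $\chi(1)$ we invoke Theorem~\ref{thm:EFP dimensions of representations}: if $d\le n/200$ then $\chi(1)\ge(n/(ed))^{d}$, and since $t\mapsto t\log(n/(et))$ is increasing on $[1,n/200]$, a level-$d$ character with $d>n/200$ has $\log\chi(1)=\Omega(n)$. Throughout we use that the stated range of $\alpha$ is empty unless $n$ is large relative to $c$, so by taking $c$ small enough we may assume $n$ exceeds the threshold of Theorem~\ref{thm:EFP dimensions of representations} and every other (absolute) threshold appearing below.

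\emph{Small $d$, namely $d\le n^{2/3}/e$.} Here $\log(n/(ed))\ge\tfrac13\log n$, so $\alpha\log\chi(1)\ge\alpha d\log(n/(ed))\ge\tfrac13\alpha d\log n\ge d/(3c)$ using $\alpha\ge 1/(c\log n)$; hence $\chi(1)^{\alpha}\ge e^{d/(3c)}$. Choosing $c\le\tfrac{1}{3\log(e/\epsilon)}$ makes $e^{d/(3c)}\ge(e/\epsilon)^{d}$, so $\|f\|_2\le e^{d}\le\epsilon^{d}\chi(1)^{\alpha}$, which suffices.

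\emph{Large $d$, namely $d>n^{2/3}/e$.} Now we aim for the $2^{-n^{3/5}}$ branch, i.e.\ for $\log\|f\|_2+n^{3/5}\log 2\le\alpha\log\chi(1)$. If $d>n/200$ then $\alpha\log\chi(1)=\Omega(\alpha n)$, while $\log\|f\|_2\le c\alpha n^{\alpha}\log n=o(\alpha n)$ (using $n^{\alpha}<n/(\log n)^{C}$, which is the only place the upper bound on $\alpha$ enters, with $C$ a fixed constant such as $2$) and $n^{3/5}=o(\alpha n)$ since $\alpha n\ge n/(c\log n)$; so the inequality holds for $n$ large. If instead $n^{2/3}/e<d\le n/200$, the monotonicity above gives $\alpha\log\chi(1)\ge\alpha d\log(n/(ed))\ge\tfrac{\alpha n^{2/3}\log n}{3e}$. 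When $\alpha<2/3$ this beats $\log\|f\|_2\le c\alpha n^{\alpha}\log n$ (for $c\le\tfrac{1}{6e}$, since then $6ec\le 1<n^{2/3-\alpha}$) and also $n^{3/5}\log 2$ (since for $n$ large $n^{1/15}$ exceeds the constant bounding $6e\log 2/(\alpha\log n)$). When $\alpha\ge 2/3$ we instead use $\log\|f\|_2\le d$ and $\alpha\log\chi(1)\ge\tfrac23 d\log(200/e)>2.8\,d$, so $\log\|f\|_2+n^{3/5}\log 2\le d+n^{3/5}\log 2<2.8\,d$ because $n^{3/5}=o(d)$. In every case $\|f\|_2\le\max(\epsilon^{d},2^{-n^{3/5}})\chi(1)^{\alpha}$, and the lemma follows.

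\emph{Expected main obstacle.} The delicate range is $n^{2/3}/e<d\le n/200$: there neither $\|f\|_2\le e^{d}$ nor $\|f\|_2\le n^{c\alpha n^{\alpha}}$ is by itself obviously enough, and one must case on whether $\alpha<2/3$, keeping $c$ small enough (in terms of $\epsilon$ and the absolute constant $\log(200/e)\approx 4.3$ from Theorem~\ref{thm:EFP dimensions of representations}) that $cn^{2/3-\alpha}$ stays below $1$. The choice of split point $n^{2/3}/e$ is also not arbitrary: the $2^{-n^{3/5}}$ branch requires $d$ polynomially larger than $n^{3/5}$ in order to force $\chi(1)^{\alpha}\ge 2^{n^{3/5}}\|f\|_2$, while the $\epsilon^{d}$ branch needs $n/(ed)$ to be at least a fixed power of $n$, and $n^{2/3}/e$ sits in between. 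The rest is routine estimation, checking that the hypothesis on $\alpha$ forces $n$ past all the finite thresholds used above.
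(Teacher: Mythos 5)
Your proof is correct and follows essentially the same route as the paper's: reduce via Cauchy--Schwarz to comparing $\|f\|_2$ with $\chi(1)^{\alpha}$, then invoke the dimension lower bound of Theorem~\ref{thm:EFP dimensions of representations} with a case split on $d$ (the paper splits at $n^{3/4}$ and factors $\chi(1)^{\alpha}$ into two halves, one absorbing $\|f\|_2$ and one producing $\max(\epsilon^d,2^{-n^{3/5}})$, whereas you split at $n^{2/3}/e$ and case further on $\alpha$ --- a bookkeeping difference only).
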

\begin{proof}
    By decreasing $c$ if necessary, we may assume that $n>n_0,$ where $n_0$ is sufficiently large.

    Let $M= \log\|f\|_2$. Since $M<n/200$ for $n$ sufficiently large, we obtain $\chi(1) \ge \left(\frac{n}{eM}\right)^{M}$ by Theorem \ref{thm:EFP dimensions of representations}. We also have $\frac{\left|\langle f, \chi\rangle\right|}{\chi(1)} \le \frac{\|f\|_2}{\chi(1)},$ which shows that \[\frac{\left|\langle f, \chi\rangle\right|}{\chi(1)} \le \chi(1)^{\alpha -1} \frac{\|f\|_2}{\chi(1)^{\alpha}}\le \chi(1)^{\alpha-1}\frac{e^{M}}{e^{\frac{1}{2}\alpha M\log(\frac{n}{eM})}\chi(1)^{\alpha/2}}.\]
    Now 
     \[ \chi(1)^{\alpha-1}\frac{e^{M}}{e^{\frac{1}{2}\alpha M\log(\frac{n}{eM})}} \le  \chi(1)^{\alpha-1}\|f\|_2^{-\alpha/2\log\left(\frac{n}{e^{2/\alpha +1}M}\right)} \le \chi(1)^{\alpha -1}.\]
     Moreover, for $d\le n^{3/4}$ we may apply Theorem \ref{thm:EFP dimensions of representations} and obtain that \[\chi(1)^{\alpha/2} \ge \left(\frac{n}{ed}\right)^{d\alpha/2} \ge n^{d/9 \alpha} \ge \left(e^{\frac{1}{9c}}\right)^d \ge \epsilon^{-d},\]
     provided that $c$ is sufficiently small. For $d\ge n^{3/4}$, we apply Theorem \ref{thm:EFP dimensions of representations} with $n^{3/4}$ in place of $d$. We obtain that
     \[\chi(1)^{\alpha/2} \ge \left(\frac{n^{1/4}}{e}\right)^{n^{3/4}\alpha/2}\ge 2^{n^{3/5}},\]
     provided that $n$ is sufficiently large.
     This completes the proof of the lemma.
\end{proof}

\remove{
\begin{lem}\label{lem:largest degree variant}
For each $\omega>0$, there exists $n_0,C>0$
such that the following holds. Let $n>n_0$, $\alpha\in(\frac{C}{\omega\log n},1)$, and let $f\colon S_n \to \mathbb{R}$ be a function with $\|f\|_1=1$ and 
\[\|f\|_2\le e^{n^{1/3}}.\]
Let $d>\frac{C\log\|f\|_2}{\log n}$ be a natural number and let $\chi$ be a character of level $d$. Then 
\[ \frac{\left|\langle f, \chi\rangle\right|}{\chi(1)} \le \chi(1)^{\alpha -1} \|f\|_2^{-\omega}.
\]
\end{lem}
\begin{proof}
     Let $M= \frac{\omega C\log\|f\|_2}{ \alpha \log n}$. 
     We have $\chi(1) \ge \left(\frac{n}{eM}\right)^{M}\ge n^{M/2}$ 
     by Theorem \ref{thm:EFP dimensions of representations}. We also have $\frac{\left|\langle f, \chi\rangle\right|}{\chi(1)} \le \frac{\|f\|_2}{\chi(1)},$ which shows that \[\frac{\left|\langle f, \chi\rangle\right|}{\chi(1)} \le \chi(1)^{\alpha -1} \frac{\|f\|_2}{\chi(1)^{\alpha}}\le \chi(1)^{\alpha-1}\|f\|_2 n^{-\alpha M/2} \le  \chi(1)^{\alpha-1}\|f\|_2^{-100}.\]  
\end{proof}
}

We now move on to the range where $d$ is small.

\begin{lem}\label{lem: smallest degree}
 For each $\epsilon>0$, there exists $c>0$, such that the following holds. Let $f\colon S_n \to \mathbb{R}$ be a class function with $\|f\|_1=1$. Let $d$ be a positive integer and let $\chi$ be a character of level $d$. Suppose that $d \le \log^{0.9}\|f\|_2,$ and let $\alpha\in \left( \frac{1}{c\log n}, 1 - \frac{C\log \log n}{\log n}\right)$ be with  $\|f\|_2 \le n^{c\alpha n^{\alpha}}$. Then
\[ \frac{|\langle f, \chi \rangle|}{\chi(1)} \le \epsilon^{d}\chi(1)^{\alpha - 1}.\]
\end{lem}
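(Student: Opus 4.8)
The plan is to feed the hypotheses into the first regime of Lemma~\ref{cor: Dividing our upper bound into regimes} and to match the resulting bound against $\chi(1)^{\alpha-1}$ using only the crude dimension estimate $\chi(1)\le n^{d}/\sqrt{d!}\le n^{d}$ of Corollary~\ref{cor: dimension of characters n over sqrt d}. Since $\alpha<1$, this estimate gives $\chi(1)^{\alpha-1}\ge n^{d(\alpha-1)}$, so writing $L:=\log\|f\|_2$ it suffices to prove $\frac{|\langle f,\chi\rangle|}{\chi(1)}\le \epsilon^{d} n^{d(\alpha-1)}$.

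First I would peel off the case $\|f\|_2\le e^{e}$. Here $d\le \log^{0.9}\|f\|_2\le e^{0.9}<3$, so (recalling $d\ge 1$ and $\|f\|_2\ge\|f\|_1=1$) either there is no admissible $d$ or $d\in\{1,2\}$; and a level-$d$ character of $S_n$ with $d\le 2$ has $\chi(1)\ge c_0 n^{d}$ for an absolute constant $c_0>0$ (by Theorem~\ref{thm:EFP dimensions of representations}, or by direct computation). Cauchy--Schwarz gives $|\langle f,\chi\rangle|\le\|f\|_2\|\chi\|_2=\|f\|_2\le e^{e}$, and since $\alpha>\frac{1}{c\log n}$ forces $n^{\alpha}\ge e^{1/c}$, the target $\epsilon^{d}n^{d(\alpha-1)}=\epsilon^{d}n^{d\alpha}/n^{d}$ dominates $e^{e}/(c_0 n^{d})$ once $c$ is small enough in terms of $\epsilon$.

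In the main case $\|f\|_2>e^{e}$ we have $L>e$, so the hypotheses $d\le L^{0.9}$ and $\|f\|_2>e$ of Lemma~\ref{cor: Dividing our upper bound into regimes}(1) hold and give $\frac{|\langle f,\chi\rangle|}{\chi(1)}\le\left(\frac{CL}{n\log L}\right)^{d}$ with $C$ the constant of that lemma. The key point is that $L\mapsto L/\log L$ is increasing on $(e,\infty)$, so from $L\le c\alpha n^{\alpha}\log n$ together with $\log(c\alpha n^{\alpha}\log n)\ge\log(n^{\alpha})=\alpha\log n$ — the inequality here being exactly $c\alpha\log n\ge 1$, i.e.\ the hypothesis $\alpha>\frac{1}{c\log n}$ — we obtain $L/\log L\le c\alpha n^{\alpha}\log n/(\alpha\log n)=c n^{\alpha}$. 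Hence $\frac{|\langle f,\chi\rangle|}{\chi(1)}\le (Cc\,n^{\alpha-1})^{d}=(Cc)^{d}n^{d(\alpha-1)}$, which is at most $\epsilon^{d}n^{d(\alpha-1)}$ as soon as $c\le\epsilon/C$, completing the reduction.

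I do not expect any serious obstacle; once Lemma~\ref{cor: Dividing our upper bound into regimes} and Corollary~\ref{cor: dimension of characters n over sqrt d} are available the argument is short. The only points that require attention are (i) that $L/\log L$ blows up as $L\to 1^{+}$, which is why the region $\|f\|_2\le e^{e}$ — where $d$ is automatically $\le 2$ — must be handled by the trivial Cauchy--Schwarz bound rather than by Lemma~\ref{cor: Dividing our upper bound into regimes}(1); and (ii) that it is precisely the lower bound on $\alpha$ that makes $\log(c\alpha n^{\alpha}\log n)\ge\alpha\log n$ go through (the upper bound on $\alpha$ and the constant $C$ appearing in the stated range of $\alpha$ are not needed). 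Finally $c$ is taken to be the minimum of the finitely many constraints above, and small enough that the stated interval for $\alpha$ is non-empty only when $n$ exceeds the threshold of Theorem~\ref{thm:EFP dimensions of representations}.
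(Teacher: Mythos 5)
Your proposal is correct and follows essentially the same route as the paper: dispose of $\|f\|_2\le e^e$ by the trivial Cauchy--Schwarz bound, then apply the first regime of Lemma~\ref{cor: Dividing our upper bound into regimes}, use monotonicity of $x/\log x$ together with the lower bound on $\alpha$ to get $\log\|f\|_2/\log\log\|f\|_2\le cn^{\alpha}$, and convert $n^{d(\alpha-1)}$ into $\chi(1)^{\alpha-1}$ via $\chi(1)\le n^{d}$. Your handling of the small-norm case is in fact slightly more careful than the paper's (which invokes only $\chi(1)\ge n-1$, whereas the bound $\chi(1)\gtrsim n^{d}$ is what is really needed when $d=2$), but this is a refinement of the same argument, not a different one.
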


\begin{proof}
    The statement holds for $\|f\|_2 \le e^e$, since we may apply the bound $|\hat{f}(\chi)|\le \|f\|_2$ and $\chi(1)\ge n-1$ for all characters $\chi$ of positive level.  We may therefore assume that  $\|f\|_2 > e^e$.
Write $M = \log\left((n^{c\alpha n^{\alpha}}\right)) > e.$
    By corollary \ref{cor: Dividing our upper bound into regimes} we have
    \[
    \frac{|\langle f, \chi \rangle|}{\chi(1)} \le \left(\frac{C'\log\|f\|_2}{n\log\log\|f\|_2}\right)^d \le \left(\frac{C' M}{\log M}\right)^d,
    \]
    by monotonicity of $x/\log x$ for $x>e$.
    
Since 
$\alpha>\frac{1}{c\log n}$ we have

    \[ M = c\alpha n^{\alpha} \log n \ge n^{\alpha}. \]
    This shows that $\log M\ge \alpha\log n$
and hence,     
    \[
    \frac{|\langle f, \chi \rangle|}{\chi(1)}  \le \left(\frac{C'c\alpha n^{\alpha}\log n}{\alpha n \log n}\right)^d \le \left(\epsilon   n^{\alpha-1}\right)^d, 
    \]
    provided that $c\le \frac{\epsilon}{C'}$.
    We may now use Corollary \ref{cor: dimension of characters n over sqrt d} to upper bound $\chi(1)\le n^d$, which completes the proof.
\end{proof}

Finally we move on to the remaining regime for the values of $d$.
\begin{lem}\label{lem:medium degee}
For each $\epsilon>0$ there exist $C,c>0$, such that the following holds. Let $f\colon S_n \to \mathbb{R}$ be a class function with $\|f\|_1=1$. Let $d$ be a natural number, $\alpha\in \left( \frac{1}{c\log n}, 1 - \frac{C\log \log n}{\log n}\right)$ and  $\chi$ a character of level $d$. If \[\log^{0.9}\|f\|_2 < d < \log\|f\|_2,\] and  $\|f\|_2 \le n^{c\alpha n^{\alpha}},$ then
\[ \frac{|\langle f ,\chi \rangle|}{\chi(1)} \le \epsilon^{d}\chi(1)^{\alpha - 1}.\]
\end{lem}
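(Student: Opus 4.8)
The plan is to interpolate between the bounds obtained in the two neighboring regimes, namely Lemma~\ref{lem: smallest degree} (which covers $d \le \log^{0.9}\|f\|_2$) and Lemma~\ref{lem: largest degree} (which covers $d \ge \log\|f\|_2$). Set $M = c\alpha n^\alpha \log n \ge \log \|f\|_2$, so that the hypothesis reads $M^{0.9} \lesssim d < M$ roughly speaking (more precisely $\log^{0.9}\|f\|_2 < d < \log\|f\|_2$). First I would invoke part (2) of Lemma~\ref{cor: Dividing our upper bound into regimes}, which is exactly valid in this range, to get
\[
\frac{|\langle f, \chi\rangle|}{\chi(1)} \le \left(\frac{C'\log\|f\|_2}{n}\right)^d \le \left(\frac{C' c\alpha n^\alpha \log n}{n}\right)^d = \left(C'c\,\alpha\log n\cdot n^{\alpha-1}\right)^d.
\]
This is already close to the desired $\epsilon^d \chi(1)^{\alpha-1}$, except for the stray $\alpha \log n$ factor and the fact that we must convert a power of $n^{\alpha-1}$ into a power of $\chi(1)^{\alpha-1}$.

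The key point is that in this regime $d$ is large — at least $\log^{0.9}\|f\|_2$, which (since $\log\|f\|_2$ can be taken large, using the trivial reduction to $\|f\|_2 > e^e$ as in Lemma~\ref{lem: smallest degree}) is large enough that Theorem~\ref{thm:EFP dimensions of representations} gives a strong lower bound on $\chi(1)$. Specifically, as long as $d \le n/200$, we have $\chi(1) \ge (n/(ed))^d$, which we can use to absorb the extra $\alpha\log n$ factor: we want to show $(C'c\,\alpha\log n)^d \le \chi(1)^{?}$ for a small power, or alternatively to show that $\chi(1)^{\alpha-1} \le n^{d(\alpha-1)}$ times a harmless factor, i.e. to replace $n^{\alpha-1}$ by $\chi(1)^{(\alpha-1)/d}$ up to lower-order terms. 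Here I would split off a small fraction of the exponent: write the bound as $\chi(1)^{\alpha-1}\cdot \big((C'c\,\alpha\log n\cdot n^{\alpha-1})^d \chi(1)^{1-\alpha}\big)$ and show the bracketed quantity is $\le \epsilon^d$. Using $\chi(1) \le n^d$ (Corollary~\ref{cor: dimension of characters n over sqrt d}) to bound $\chi(1)^{1-\alpha} \le n^{d(1-\alpha)}$ makes the $n^{\alpha-1}$ powers cancel against the $d$-fold product, leaving $(C'c\,\alpha\log n)^d$, which is \emph{not} obviously $\le \epsilon^d$ since $\alpha\log n$ can be as large as $\Theta(\log n)$. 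So the crude cancellation is too lossy, and one must instead use the lower bound on $\chi(1)$ more carefully on part of the exponent.

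The right move, I expect, is the same trick as in Lemma~\ref{lem: largest degree}: peel off a factor $\chi(1)^{\alpha/2}$ (say) from the slack. Since $d$ is large we have $\chi(1) \ge (n/(ed))^d \ge (n/(e\log\|f\|_2))^d$, and because $\log\|f\|_2 \le M = c\alpha n^\alpha\log n \le n^{\alpha'}$ for any $\alpha' > \alpha$ (for $n$ large, using $\alpha < 1 - \Omega(\log\log n/\log n)$), this is at least $n^{\Omega(d)}$, in fact $\ge n^{d(1-\alpha)/2}$ comfortably once $c$ is small. Thus $\chi(1)^{(1-\alpha)} \le \chi(1)^{(1-\alpha)/2} \cdot n^{d(1-\alpha)^2/\dots}$, and one arranges that the $\chi(1)^{(1-\alpha)/2}$ factor kills the $(C'c\alpha\log n)^d$ term. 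Concretely: $(C'c\,\alpha\log n)^d \cdot \chi(1)^{-(1-\alpha)/2} \le (C'c\,\alpha\log n)^d\cdot (n/(e\log\|f\|_2))^{-d(1-\alpha)/2} \le (C'c\,\alpha\log n)^d \cdot n^{-d(1-\alpha)/4}$ (using $\log\|f\|_2 \le n^{1/2}$ say), and $\alpha\log n \cdot n^{-(1-\alpha)/4} \to 0$ as long as $1-\alpha \ge \Omega(\log\log n/\log n)$ — precisely the hypothesis on $\alpha$, with $C$ chosen appropriately. Then the remaining $\chi(1)^{-(1-\alpha)/2}$ combined with $\chi(1)^{\alpha-1}$ and the residual $n$-powers is bounded as in Lemma~\ref{lem: largest degree}. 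The main obstacle is therefore bookkeeping: choosing the split of the exponent (how much of $\chi(1)$'s slack to spend on killing $\alpha\log n$ versus on the $n^{\alpha-1}$-to-$\chi(1)^{\alpha-1}$ conversion) so that everything closes with $c$ small and $C$ large, and verifying that $d \le n/200$ (needed for Theorem~\ref{thm:EFP dimensions of representations}) holds automatically here since $d < \log\|f\|_2 \le c\alpha n^\alpha\log n \ll n$. Once the split is fixed the rest is routine, parallel to the computation already carried out in Lemma~\ref{lem: largest degree}.
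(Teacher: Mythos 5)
There is a genuine gap, and it sits exactly at the step you flag as ``the right move.'' After invoking part (2) of Lemma~\ref{cor: Dividing our upper bound into regimes}, what must be shown is
\[
\left(C'c\,\alpha\log n\right)^d\, n^{d(\alpha-1)}\,\chi(1)^{1-\alpha}\;=\;\left(C'c\,\alpha\log n\right)^d\left(\frac{\chi(1)}{n^d}\right)^{1-\alpha}\;\le\;\epsilon^d .
\]
Here $\chi(1)$ appears with a \emph{positive} exponent $1-\alpha$, so the only way to win is via an \emph{upper} bound on $\chi(1)$ that improves on the trivial $\chi(1)\le n^d$ by a factor exponential in $d$. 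Your proposal instead reaches for the \emph{lower} bound $\chi(1)\ge (n/(ed))^d$ of Theorem~\ref{thm:EFP dimensions of representations} and manipulates a factor $\chi(1)^{-(1-\alpha)/2}$ that is simply not present: if you split $\chi(1)^{1-\alpha}=\chi(1)^{-(1-\alpha)/2}\cdot\chi(1)^{3(1-\alpha)/2}$, the second factor costs $n^{3d(1-\alpha)/2}$, which loses $n^{d(1-\alpha)/2}$ against the available $n^{d(\alpha-1)}$. Unlike Lemma~\ref{lem: largest degree}, where the starting bound $\|f\|_2/\chi(1)$ has $\chi(1)$ in the denominator and a lower bound on the dimension genuinely helps, the bound from part (2) contains no $\chi(1)$ at all, so there is no ``slack'' of the kind you want to peel off. (Separately, your intermediate claim $\log\|f\|_2\le n^{1/2}$ is false for $\alpha>1/2$, where $\log\|f\|_2$ can be of order $n^\alpha\log n$.)

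What actually closes the argument in the paper is the upper bound $\chi(1)\le n^d/\sqrt{d!}$ (Corollary~\ref{cor: dimension of characters n over sqrt d}), which gives $\bigl(n^d/\chi(1)\bigr)^{1-\alpha}\ge d^{\,d(1-\alpha)/2}e^{-d(1-\alpha)/2}$; one then needs $d^{(1-\alpha)/2}$ to dominate the stray logarithmic factor, i.e.\ $(1-\alpha)\log d\gtrsim \log\log n$. This in turn requires a quantitative \emph{lower} bound on $d$, which your reduction to $\|f\|_2>e^e$ does not supply ($d>\log^{0.9}\|f\|_2$ alone allows $d$ as small as a power of $\log n$ when $\|f\|_2$ is moderate). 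The paper obtains it by first decreasing $\alpha$ until either $\|f\|_2=n^{c\alpha n^{\alpha}}$ — whence $\log d>0.9\log\log\|f\|_2\ge 0.9\,\alpha\log n$, so $d\ge n^{0.9\alpha}$ — or $\alpha$ hits a floor of order $\log\log n/\log n$, and then checking $\alpha<1-100\log\log d/\log d$. Without both ingredients (the $\sqrt{d!}$ saving in the dimension upper bound, and the normalization forcing $d$ to be large), the factor $(\alpha\log n)^d$ cannot be absorbed, so the bookkeeping you defer does not close as described.
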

\begin{proof}
By decreasing $\alpha$ if necessary we may assume that either $\|f\|_2 = n^{c\alpha n^{\alpha}}$ or  $\alpha = \frac{100\log \log n}{\log n}$. By Corollary \ref{cor: Dividing our upper bound into regimes} we have  
    \[
   \frac{|\langle f ,\chi \rangle|}{\chi(1)} 
   \le \left(\frac{C'\log\|f\|_2}{n}\right)^d = \chi(1)^{\alpha-1}\left(\frac{C'\log\|f\|_2}{n\log \log \|f\|_2}\right)^d n^{(1-\alpha)d}\cdot  A,\]
   where 
   \[
   A = \left(\log \log \|f\|_2\right)^d\frac{\chi(1)^{1-\alpha}}{n^{(1-\alpha)d}}.
   \]
   Similarly to the proof of Lemma \ref{lem: smallest degree} we obtain that \[\frac{|\langle f,\chi \rangle|}{\chi(1)} \le A\epsilon'^d\chi(1)^{\alpha-1}\]
   for an $\epsilon'>0$ of our choice. 
   Therefore, our proof will be completed once we show that $A\le C''^d$ for some sufficiently large $C''$.
   
    Now, \[\log\log \|f\|_2 \le \frac{\log d}{0.9}.\] Moreover, by Corollary~\ref{cor: dimension of characters n over sqrt d}, \[\chi(1)\le \frac{n^de^d}{d^{d/2}}.\] We will soon show that $\alpha< 1-\frac{100\log \log d}{\log d}$. Combining with the hypothesis, we obtain
    \[
    A \le e^{d} \left(\frac{\log d}{0.9}\right)^d d^{-\frac{1}{2}d(1-\alpha)} \le C''^d.
    \]
    
    We now show that  $$\alpha< 1-\frac{100\log \log d}{\log d}.$$ If $\alpha <1/2$ we are done, so suppose that $\alpha \ge 1/2.$ We have $\|f\|_2 = n^{c\alpha n^{\alpha}}$. This implies that
    \[
    \log \|f\|_2 = c\alpha n^{\alpha}\log n \ge n^{\alpha} 
    \]
    and hence, 
    \[
     \log \log \|f\|_2 \ge  \frac{1}{2}\log n.
    \]
    
    On the other hand, we have 
    \[
    d >  \log^{0.9} \|f\|_2
    \] and so 
    \[
    \log d > 0.9 \log \log \|f\|_2 \ge \frac{1}{20}\log n. 
    \]
By hypothesis we have 
\[
\alpha < 1-\frac{C\log\log n}{\log n}< 1-\frac{100\log \log d}{\log d}, 
\]
provided that $C$ is sufficiently large. This completes the proof of the lemma.
\end{proof}

Combining the above lemmas yields the following theorem. 
\begin{thm}\label{thm: Useful for mixing times}
For each $\epsilon>0$ there exist  $c,C>0$ such that the following holds. Let \[\alpha \in\left(\frac{1}{c\log n}, 1-\frac{C\log \log n}{\log n}\right).\] Suppose that a class function $f\colon S_n\to \mathbb{R}$ satisfies $\|f\|_2 \le  n^{c\alpha n^\alpha}.$ Then for every $d>0$ and every character $\chi$ of level $d$ we have 
\[\frac{\left|\langle f,\chi\rangle \right|}{\chi(1)} \le \max\left(\epsilon^d, 2^{-n^{3/5}}\right) \chi(1)^{\alpha -1}.\] 
\end{thm}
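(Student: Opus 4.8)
The plan is to assemble Theorem~\ref{thm: Useful for mixing times} directly from the four regime lemmas that precede it, namely Lemmas~\ref{lem: largest degree}, \ref{lem: smallest degree}, and \ref{lem:medium degee}, together with Corollary~\ref{cor: Dividing our upper bound into regimes}. The only subtlety is bookkeeping: each of these lemmas produces a constant $c$ (and sometimes $C$) depending on $\epsilon$, and we must take the intersection of the allowed ranges of $\alpha$ and the minimum of the constants $c$, plus the maximum of the constants $C$, so that all three lemmas apply simultaneously with the same choice of parameters. So first I would fix $\epsilon>0$, invoke each of the three lemmas with (say) the same target $\epsilon$, obtain constants $c_1,C_1$, $c_2$, $c_3$ respectively, and set $c=\min(c_1,c_2,c_3)$ and $C=\max(C_1, C_2, C_3)$ (where any lemma not mentioning a $C$ contributes a harmless constant like $1$). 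With $\alpha$ in the resulting range and $\|f\|_2\le n^{c\alpha n^\alpha}$, every hypothesis common to the lemmas is met.

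Next I would split on the value of $d$ into exactly the three cases the lemmas are tailored to. If $d\le \log^{0.9}\|f\|_2$, Lemma~\ref{lem: smallest degree} gives $\frac{|\langle f,\chi\rangle|}{\chi(1)}\le \epsilon^d\chi(1)^{\alpha-1}$, which is at most $\max(\epsilon^d, 2^{-n^{3/5}})\chi(1)^{\alpha-1}$. If $\log^{0.9}\|f\|_2 < d < \log\|f\|_2$, Lemma~\ref{lem:medium degee} gives the same bound $\epsilon^d\chi(1)^{\alpha-1}$. Finally, if $d\ge \log\|f\|_2$, Lemma~\ref{lem: largest degree} gives exactly $\frac{|\langle f,\chi\rangle|}{\chi(1)}\le \max(\epsilon^d, 2^{-n^{3/5}})\chi(1)^{\alpha-1}$. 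In all three cases the desired bound holds, so combining the cases completes the proof. One degenerate point to handle explicitly: when $\|f\|_2\le e$ (so that $\log^{0.9}\|f\|_2$ and $\log\|f\|_2$ are tiny or negative), only the third regime $d\ge \log\|f\|_2$ is nonempty for positive integers $d$, and Lemma~\ref{lem: largest degree} already covers it; alternatively one notes directly that $|\langle f,\chi\rangle|\le \|f\|_2\le e$ while $\chi(1)^{1-\alpha}$ is large, so the bound is immediate. I would also double-check that Lemma~\ref{lem: smallest degree} as stated technically requires the constant $C$ in the range of $\alpha$ even though it only names $c$ in its conclusion — this is consistent provided we use the common $C$ fixed at the outset.

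The main obstacle here is not mathematical depth but making sure the three lemmas genuinely tile the range of $d$ with no gap and no incompatible constant, and that the hypothesis $\alpha\in(\frac{1}{c\log n}, 1-\frac{C\log\log n}{\log n})$ is exactly what each lemma needs; in particular Lemma~\ref{lem: largest degree} and Lemma~\ref{lem:medium degee} both carry this two-sided constraint on $\alpha$, so after intersecting we are safe. The factor $2^{-n^{3/5}}$ only ever enters through Lemma~\ref{lem: largest degree} (in the sub-case $d\ge n^{3/4}$), which is why it appears with a max rather than unconditionally; keeping it inside the max throughout, rather than trying to absorb it, is the cleanest way to state the conclusion uniformly. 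The whole proof is therefore a short case analysis, essentially one line per regime, after the constants are reconciled.

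\begin{proof}[Proof of Theorem~\ref{thm: Useful for mixing times}]
Fix $\epsilon>0$. Apply Lemma~\ref{lem: smallest degree} with this $\epsilon$ to obtain a constant $c_1>0$; apply Lemma~\ref{lem:medium degee} with this $\epsilon$ to obtain constants $c_2,C_2>0$; and apply Lemma~\ref{lem: largest degree} with this $\epsilon$ to obtain constants $c_3,C_3>0$. Set $c=\min(c_1,c_2,c_3)$ and $C=\max(1,C_2,C_3)$, where we also require, as in Lemma~\ref{lem: smallest degree}, that the same $C$ bounds the implicit constant in its range for $\alpha$ (this only shrinks the admissible range, so it is harmless). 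Let $\alpha\in\left(\frac{1}{c\log n}, 1-\frac{C\log\log n}{\log n}\right)$, let $f\colon S_n\to\mathbb{R}$ be a class function with $\|f\|_1=1$ and $\|f\|_2\le n^{c\alpha n^\alpha}$, let $d$ be a positive integer, and let $\chi$ be a character of level $d$. With these choices the hypotheses of all three lemmas are satisfied.

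If $\|f\|_2\le e$, then $\log\|f\|_2\le 1\le d$, so the case $d\ge\log\|f\|_2$ below applies. Otherwise $\|f\|_2>e$ and we distinguish three cases according to $d$.

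\textbf{Case 1: $d\le \log^{0.9}\|f\|_2$.} By Lemma~\ref{lem: smallest degree},
\[
\frac{|\langle f,\chi\rangle|}{\chi(1)}\le \epsilon^{d}\chi(1)^{\alpha-1}\le \max\!\left(\epsilon^d, 2^{-n^{3/5}}\right)\chi(1)^{\alpha-1}.
\]

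\textbf{Case 2: $\log^{0.9}\|f\|_2 < d < \log\|f\|_2$.} By Lemma~\ref{lem:medium degee},
\[
\frac{|\langle f,\chi\rangle|}{\chi(1)}\le \epsilon^{d}\chi(1)^{\alpha-1}\le \max\!\left(\epsilon^d, 2^{-n^{3/5}}\right)\chi(1)^{\alpha-1}.
\]

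\textbf{Case 3: $d\ge \log\|f\|_2$.} By Lemma~\ref{lem: largest degree},
\[
\frac{|\langle f,\chi\rangle|}{\chi(1)}\le \max\!\left(\epsilon^d, 2^{-n^{3/5}}\right)\chi(1)^{\alpha-1}.
\]

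In every case the asserted bound holds, which completes the proof.
\end{proof}
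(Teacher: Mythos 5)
Your proof is correct and is exactly the paper's argument: the paper also proves this theorem by combining Lemmas~\ref{lem: largest degree}, \ref{lem: smallest degree}, and \ref{lem:medium degee}, which tile the three regimes of $d$. Your write-up simply makes explicit the constant reconciliation and the degenerate case $\|f\|_2\le e$ that the paper leaves implicit.
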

\begin{proof}
The theorem follows immediately by combining Lemmas \ref{lem: largest degree}, \ref{lem: smallest degree}, and \ref{lem:medium degee}.
\end{proof}

\remove{
$n^{c\alpha n^{\alpha}}\le e^{\frac{n}{\mathrm{exp}(200/\alpha)}}$

$c\beta 2^{\beta}n^{200/\beta}\le n$
}
We are now ready to deduce our upper bound on the character ratios of conjugacy classes with few cycles. 
\begin{proof}[Proof of Theorem \ref{thm: few cycles}]
First we note that if a permutation $\sigma$ has at most $c\alpha n^{\alpha}$ cycles, then by the orbit-stabilizer theorem there exists a constant $c'=c'(c)$, such that the conjugacy class of $\sigma$ has density $\prod_{i=1}^{n}(f_i!i^{f_i})^{-1}\ge n^{-c'\alpha n^{\alpha}},$ where $f_i$ is the number of cycles of $\sigma$ of length $i$.  
Theorem \ref{thm: few cycles} now follows from Theorem \ref{thm: Useful for mixing times} with $\epsilon=1$.
\end{proof}

\subsection{Adaptation to $A_n$}\label{sec:adaptation 2}

To adapt these results to the group $A_n$ instead of $S_n$, we observe that most of the analysis applied in this section requires general arguments on finite groups. The only properties of $S_n$ we used were Theorem~\ref{thm:main}, Corollary~\ref{cor: dimension of characters n over sqrt d} and Theorem~\ref{thm:EFP dimensions of representations}. Therefore, if we replace the use of these properties with the analogous statements Corollary~\ref{cor:of thm main}, Corollary~\ref{cor: dimension of characters n over sqrt d A_n} and Lemma~\ref{lem: dimension of characters A_n}, respectively, we obtain the following adapted results for $A_n$:
\begin{thm}[Adaptation of Theorem \ref{thm:Upper bounds on Fourier coefficients}]\label{thm:Upper bounds on Fourier coefficients An}
There exists an absolute constant $C>0$, such that the following holds. Let $f$ be a class function with $\frac{\|f\|_2}{\|f\|_1} \le M$ for some bound $M$, and let $\chi$ be a character of $A_n$ of level $d \le  \log(M) $. 
Then 
\[
|\langle f , \chi \rangle| \le  \chi(1)\|f\|_1\left(\frac{C\log M}{n \log \left(\frac{\log M}{d}\right)} \right)^d. 
\]
\end{thm}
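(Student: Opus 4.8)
The plan is to rerun the proof of Theorem~\ref{thm:Upper bounds on Fourier coefficients} essentially verbatim, observing that the only place where a property specific to $S_n$ entered was the norm bound of Theorem~\ref{thm:main}, which we now replace by its alternating-group analogue Corollary~\ref{cor:of thm main}. Concretely, fix $q\ge 2$ to be chosen and let $q'$ be its Hölder conjugate. The inner product $\langle f,\chi\rangle=\E_{\sigma\sim A_n}[f(\sigma)\chi(\sigma)]$ and the norms $\|\cdot\|_p$ are taken with respect to the uniform probability measure on $A_n$, so Hölder's inequality gives $|\langle f,\chi\rangle|\le\|f\|_{q'}\|\chi\|_q$, and the log-convexity of $L^p$-norms (a purely measure-theoretic fact, insensitive to the underlying group) gives $\|f\|_{q'}\le\|f\|_1^{1-\theta}\|f\|_2^{\theta}=\|f\|_1(\|f\|_2/\|f\|_1)^{\theta}\le\|f\|_1 M^{\theta}$, where $\tfrac{1}{q'}=\tfrac{\theta}{2}+(1-\theta)$, i.e. $q=2/\theta$.

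Next I would make the same choice of parameter as before: $\theta=\tfrac{d}{\log M}$, which is legitimate since the hypothesis $d\le\log M$ ensures $\theta\le 1$ and hence $q=\tfrac{2\log M}{d}\ge 2$, while at the same time $M^{\theta}\le e^{d}$. Now apply Corollary~\ref{cor:of thm main} to the character $\chi$ of $A_n$ of level $d$ — this is exactly the point where Theorem~\ref{thm:main} was used in the $S_n$ case, and Corollary~\ref{cor:of thm main} provides the identical bound $\|\chi\|_q\le\bigl(\tfrac{Cq}{\log q}\bigr)^d\bigl(\tfrac{d^d\chi(1)}{n^d}\bigr)^{1-2/q}\le\bigl(\tfrac{2Cq}{\log q}\bigr)^d\tfrac{d^d\chi(1)}{n^d}$. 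Multiplying the three pieces together, substituting $q=\tfrac{2\log M}{d}$ and absorbing the factors $e$, $2C$, and the $\tfrac{1}{\log q}$-versus-$\log(\log M/d)$ bookkeeping into a single constant, yields
\[
|\langle f,\chi\rangle|\le\chi(1)\|f\|_1\left(\frac{C'\log M}{n\log(\log M/d)}\right)^d,
\]
which is the claimed inequality after renaming $C'$ as $C$.

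I do not expect any genuine obstacle here: the argument is the one already indicated in Section~\ref{sec:adaptation 2}, and the substitution is clean because Corollary~\ref{cor:of thm main} was tailored to have the same shape as Theorem~\ref{thm:main}. The only points deserving a word of care are bookkeeping ones: one should confirm that the notion of \emph{level} of an $A_n$-character used in Corollary~\ref{cor:of thm main} is the one appearing in the statement of the theorem (it is, by the convention set in Section~\ref{sec:adaptations} that the level of a constituent of $\chi_\lambda^R$ equals the level of $\chi_\lambda$), and that the edge case $q=2$ (when $d=\log M$) is handled by the trivial bound $\|\chi\|_2=1$ inside Corollary~\ref{cor:of thm main}. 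With these remarks the proof is complete.
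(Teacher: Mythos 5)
Your proposal is correct and is exactly the paper's own route: Section \ref{sec:adaptation 2} proves Theorem \ref{thm:Upper bounds on Fourier coefficients An} precisely by rerunning the proof of Theorem \ref{thm:Upper bounds on Fourier coefficients} with Corollary \ref{cor:of thm main} substituted for Theorem \ref{thm:main}, which is what you do. Your bookkeeping remarks (the convention on the level of $A_n$-characters and the boundary case $q=2$) are accurate and consistent with the conventions of Section \ref{sec:adaptations}.
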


\begin{thm}[Adaptation of Theorem \ref{thm: actual upper bound on character ratio}] \label{thm: actual upper bound on character ratio An}
There exists an absolute constant $C>0,$ such that the following holds for all $\alpha \in(0,1)$. Let 
$\sigma\in A_n$ be a permutation whose conjugacy class has density $\ge n^{-2 \alpha n^{\alpha}}$.
Then for every character $\chi$ of $A_n$ of level $d \le \alpha n^{\alpha}\log n$ we have 
\[
\left| \frac{\chi(\sigma)}{\chi(1)}\right|\le  n^{d(\alpha -1)}\left(\frac{C \alpha \log n}{\log (\alpha n^{\alpha} \log n)-\log d}\right)^{d}.
\]
\end{thm}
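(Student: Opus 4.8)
The plan is to mirror the proof of Theorem~\ref{thm: actual upper bound on character ratio} verbatim, simply substituting the $A_n$ version of the Fourier-coefficient bound. Concretely, let $C=\sigma^{A_n}$ be the conjugacy class of $\sigma$ \emph{inside $A_n$}, and set $f=\frac{1_C}{\mu(C)}$, where $\mu$ denotes the uniform measure on $A_n$. Since $\chi$ is a class function on $A_n$ we have $\langle f,\chi\rangle=\frac{1}{\mu(C)}\mathbb{E}_{\tau\sim A_n}[1_C(\tau)\chi(\tau)]=\chi(\sigma)$, and moreover $\|f\|_1=1$ while $\|f\|_2=\mu(C)^{-1/2}\le n^{\alpha n^{\alpha}}$ by the density hypothesis.

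Next I would apply Theorem~\ref{thm:Upper bounds on Fourier coefficients An} with $M=n^{\alpha n^{\alpha}}$; the hypothesis $d\le \alpha n^{\alpha}\log n=\log M$ is exactly the assumption of the statement we are proving, so the theorem applies and yields
\[
|\chi(\sigma)|=|\langle f,\chi\rangle|\le \chi(1)\left(\frac{C\alpha n^{\alpha}\log n}{n\bigl(\log(\alpha n^{\alpha}\log n)-\log d\bigr)}\right)^{d}.
\]
Dividing by $\chi(1)$ and extracting the factor $n^{\alpha-1}$ from the numerator gives the claimed bound $n^{d(\alpha-1)}\left(\frac{C\alpha\log n}{\log(\alpha n^{\alpha}\log n)-\log d}\right)^{d}$.

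There is essentially no obstacle beyond bookkeeping: the entire content is already packaged into Theorem~\ref{thm:Upper bounds on Fourier coefficients An}, whose own proof is the $A_n$ adaptation of Theorem~\ref{thm:Upper bounds on Fourier coefficients} obtained by replacing Theorem~\ref{thm:main}, Corollary~\ref{cor: dimension of characters n over sqrt d}, and Theorem~\ref{thm:EFP dimensions of representations} with Corollary~\ref{cor:of thm main}, Corollary~\ref{cor: dimension of characters n over sqrt d A_n}, and Lemma~\ref{lem: dimension of characters A_n}, respectively. The only points that deserve a moment's care are: (i) all the tools used in the argument --- H\"older's inequality, log-convexity of $L^p$-norms, and the ``conjugacy-class trick'' that turns a character value into a Fourier coefficient --- are valid over an arbitrary finite group and in particular over $A_n$; and (ii) the conjugacy class and density must be taken in $A_n$ rather than in $S_n$, so that $\mu(C)$ means $|C|/|A_n|$, which is consistent with how the density hypothesis is stated.
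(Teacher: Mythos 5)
Your proposal is correct and is exactly the argument the paper intends: the paper proves the $S_n$ version (Theorem~\ref{thm: actual upper bound on character ratio}) by setting $f=1_C/\mu(C)$ and invoking Theorem~\ref{thm:Upper bounds on Fourier coefficients} with $M=n^{\alpha n^{\alpha}}$, and then obtains the $A_n$ version by the blanket remark that one substitutes Theorem~\ref{thm:Upper bounds on Fourier coefficients An} (built from Corollary~\ref{cor:of thm main}, Corollary~\ref{cor: dimension of characters n over sqrt d A_n}, and Lemma~\ref{lem: dimension of characters A_n}). Your write-up simply makes this substitution explicit, including the correct observation that the class and density are taken inside $A_n$.
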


\begin{thm}[Adaptation of Theorem \ref{thm: Useful for mixing times}]\label{thm: A_n Useful for mixing times}
For each $\epsilon>0$ there exist $c,C>0$ such that the following holds. Let \[\alpha \in\left(\frac{1}{c\log n}, 1-\frac{C\log \log n}{\log n}\right).\] Suppose that a class function $f\colon A_n\to \mathbb{R}$ satisfies $\|f\|_2 \le  n^{c\alpha n^\alpha}.$ Then for every $d>0$ and every character $\chi$ of level $d$ we have 
\[\frac{\left|\langle f,\chi\rangle \right|}{\chi(1)} \le 2\max\left(\epsilon^d, 2^{-n^{3/5}}\right) \chi(1)^{\alpha -1}.\] 
\end{thm}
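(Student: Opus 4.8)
The plan is to transcribe the proof of Theorem~\ref{thm: Useful for mixing times} line by line to the alternating group, replacing each appeal to an $S_n$-specific fact by its $A_n$-counterpart. Recall that Theorem~\ref{thm: Useful for mixing times} was deduced by partitioning the range of the level $d$ into three regimes, $d\le \log^{0.9}\|f\|_2$, $\log^{0.9}\|f\|_2<d<\log\|f\|_2$, and $d\ge \log\|f\|_2$, treated by Lemmas~\ref{lem: smallest degree}, \ref{lem:medium degee}, and \ref{lem: largest degree} respectively, and then combining the three bounds. Those three lemmas use, beyond purely group-theoretic manipulations, only the following inputs: the Fourier-coefficient estimate packaged in Lemma~\ref{cor: Dividing our upper bound into regimes}; the dimension \emph{upper} bound $\chi(1)\le n^d/\sqrt{d!}$ (Corollary~\ref{cor: dimension of characters n over sqrt d}); and the dimension \emph{lower} bound $\chi(1)\ge (n/ed)^d$ for characters of level $\ge d$ (Theorem~\ref{thm:EFP dimensions of representations}). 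For $A_n$ the first is already available, since Lemma~\ref{cor: Dividing our upper bound into regimes} is stated for $G\in\{A_n,S_n\}$ (its proof going through once Theorem~\ref{thm:Upper bounds on Fourier coefficients} is read as Theorem~\ref{thm:Upper bounds on Fourier coefficients An}); the second is replaced verbatim by Corollary~\ref{cor: dimension of characters n over sqrt d A_n}; and the third by Lemma~\ref{lem: dimension of characters A_n}, which asserts $\chi(1)\ge \tfrac12(n/ed)^d$.

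Carrying this out, I would first observe that the $A_n$-analogues of Lemmas~\ref{lem: smallest degree} and \ref{lem:medium degee} (the small- and medium-$d$ regimes) only invoke Lemma~\ref{cor: Dividing our upper bound into regimes} and the dimension \emph{upper} bound, which holds unchanged for $A_n$ by Corollary~\ref{cor: dimension of characters n over sqrt d A_n}; so their proofs require no modification at all. For the large-$d$ regime (analogue of Lemma~\ref{lem: largest degree}) one uses the dimension \emph{lower} bound twice---once with parameter $M=\log\|f\|_2$ to control $\|f\|_2/\chi(1)^{\alpha}$ and once with parameter $\min(d,n^{3/4})$ to produce the factor $\epsilon^{-d}$ or $2^{n^{3/5}}$---and here Lemma~\ref{lem: dimension of characters A_n} introduces a single factor of $\tfrac12$ at each use. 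Since $2^{-\alpha/2}\ge 2^{-1/2}$, this loss is absorbed by shrinking $c$ in the $\epsilon^d$ term and is negligible for large $n$ in the $2^{-n^{3/5}}$ term; propagating the accumulated constant-factor loss through the combination of the three regimes comfortably yields the prefactor $2$ in the statement (and the trivial bound $|\hat f(\chi)|\le\|f\|_2$ from Cauchy--Schwarz used in the third regime loses nothing). Finally I would combine the three $A_n$-lemmas exactly as Theorem~\ref{thm: Useful for mixing times} combines Lemmas~\ref{lem: largest degree}, \ref{lem: smallest degree}, and \ref{lem:medium degee}, obtaining the claimed bound for every level $d>0$.

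There is no real obstacle here---the content is bookkeeping---but two points deserve a check. First, the self-conjugate case: when $\lambda=\lambda'$ the two constituents $\chi_{\lambda,1},\chi_{\lambda,2}$ have level $\ge (n-1)/2$, and since $\log\|f\|_2\le c\alpha n^{\alpha}\log n < (n-1)/2$ for large $n$ in the allowed range of $\alpha$, these characters always fall into the large-$d$ regime; one must confirm that Lemma~\ref{lem: dimension of characters A_n}, which is stated for level \emph{at least} $d$, supplies the needed dimension bound for them (it does, $\chi_{\lambda,i}(1)=\chi_\lambda(1)/2\ge\tfrac12(n/ed)^d$). Second, one must check that the hypothesis thresholds of Theorem~\ref{thm:EFP dimensions of representations} and Lemma~\ref{lem: dimension of characters A_n} (namely $\mathrm{level}\le n/200$) are met: $M<n/200$ and, for $n$ large, $n^{3/4}<n/200$, while in the range $d>n/200$ one falls back on $\|\chi\|_q^q\le\chi(1)^{q-2}$ directly as in the $S_n$ proof. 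After these verifications the argument is a mechanical transcription of the $S_n$ case with the three substitutions above.
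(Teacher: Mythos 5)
Your proposal is correct and follows essentially the same route as the paper: the authors likewise obtain the $A_n$ version by rerunning the three-regime argument of Theorem \ref{thm: Useful for mixing times} with Theorem \ref{thm:main}, Corollary \ref{cor: dimension of characters n over sqrt d}, and Theorem \ref{thm:EFP dimensions of representations} replaced by Corollary \ref{cor:of thm main}, Corollary \ref{cor: dimension of characters n over sqrt d A_n}, and Lemma \ref{lem: dimension of characters A_n}, the factor $\tfrac12$ in the latter accounting for the prefactor $2$ in the statement. Your additional checks (the self-conjugate case and the level thresholds) are sound and only make explicit what the paper leaves implicit.
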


\section{Mixing times}
\label{sec:mixing times}

In this section we prove our mixing result, namely Theorem \ref{thm:mixing time results}. Theorem \ref{thm:Simplified mixing time} will then follow immediately as it consists of the special case where all the functions are equal. We will then quickly deduce Corollary \ref{cor:two steps mixing}.
\subsection{Proof of Theorem \ref{thm:mixing time results}}

The standard way to deduce mixing time results from character bounds of the form $\frac{|\chi(\sigma)|}{\chi(1)}$ involves estimates of the Witten zeta function $\sum_{\chi}\chi(1)^{-s}$ and is well known. This route works smoothly when one is interested in rough estimates of the mixing time, but our more precise estimates require a different approach. 

\begin{proof}[Proof of Theorem \ref{thm:mixing time results}]
We may assume that $n$ is sufficiently large by increasing $C$ if necessary.
It is well known that we have \[\langle f_1*f_2*\cdots * f_\ell,\chi\rangle = \frac{\hat{f_1}(\chi)\cdots \hat{f_\ell}(\chi)}{\chi(1)^{\ell-1}}.\]
Since the characters form an orthonormal basis, we obtain
\[
    \|f_1*f_2*\cdots * f_\ell - 1\|_2^2 = \sum_{\chi \ne 1 }\frac{|\hat{f_1}(\chi)|^2\cdots |\hat{f_\ell}(\chi)|^2}{\chi(1)^{2\ell-2}}.
\]

Recall that $0 \le \alpha_i \le 1-\frac{C\log\log n}{\log n}$ for all $1 \le i \le \ell$ and $\sum_i\alpha_i \le \ell-1$. Let $c>0$ be a sufficiently small constant to be chosen later. Since the statement of the theorem gets weaker as $\alpha_i$ increases in the interval $(0,1-\frac{2C\log\log n}{\log n}]$, we may increase the $\alpha_i$ in the interval until either $\alpha_i \ge 1-\frac{2C\log\log n}{\log n}$ for all $i$ or $\sum_i\alpha_i = \ell-1$. In either case, we have $\alpha_i\ge \min((\ell-1)\frac{C\log \log n}{\log n}, 1-\frac{2C\log\log n}{\log n})\ge \frac{1}{c\log n}$, provided that $C,n$ are sufficiently large. By Theorem \ref{thm: A_n Useful for mixing times} with $\epsilon/16$ we have 
\[\|f_1*f_2*\cdots * f_\ell - 1\|_2^2 \le \sum_{d=1}^{n}\sum_{\chi \text{ of level }d}  4\max((\epsilon/16)^{d}, 2^{-n^{3/5}})^2 <  \epsilon,\] 
provided that $n$ is sufficiently large and $c$ is sufficiently small. Indeed, the number of characters of level $d$ is the same as the number of partitions of $d$ which is easily upper bounded by $2^d$, and there are at most $e^{O(\sqrt{n})}$ characters overall. 
\end{proof}


\subsection{Deducing Corollary \ref{cor:two steps mixing}}

\begin{proof}[Proof of Corollary \ref{cor:two steps mixing}] We have \[f * f (1) =\mathbb{E}_{\sigma \sim A_n}[f(\sigma)f(\sigma^{-1})] = \|f\|_2,\]
where the last equality follows from the fact that $f$ is real valued and symmetric. The corollary now follows from Theorem \ref{thm:mixing time results}.    
\end{proof}

\section{Product mixing}
\label{sec:product mixing}
In this section we show that $A_n$ is normally an $n^{-cn^{1/3}}$-mixer when $c$ is sufficiently small, but not normally an $n^{-Cn^{1/3}}$-mixer when $C$ is sufficiently large.

\begin{proof}[Proof of Theorem \ref{thm:normally mixing}]
    Let $A,B,C$ be subsets of $A_n$ of density $\ge n^{-cn^{1/3}}$. Write $f=\frac{1_A}{\mu(A)},g=\frac{1_B}{\mu(B)}$ and $h=\frac{1_C}{\mu(C)}$. Then we have \[\frac{\Pr_{a\sim A,b\sim B}[ab\in C] -\mu(C)}{\mu(C)} = \langle f*g,h\rangle - 1= \sum_{\chi\ne 1} \frac{\hat{f}(\chi)\hat{g}(\chi)\hat{h}(\chi)}{\chi(1)}.\] Applying Theorem \ref{thm: A_n Useful for mixing times} with $\alpha = 1/3$ completes the proof in similar fashion to the proof of Theorem \ref{thm:mixing time results}.
\end{proof}

\begin{proof}[Proof of Corollary \ref{cor:Nikolov pyber}]
Let $\sigma \in A_n$, and set $B=A$ and $C = A^{-1}\sigma^{A_n}.$ Since both $A,B$ and $C$ have density $\ge n^{-cn^{1/3}}$, we may apply Theorem \ref{thm:normally mixing} and obtain that $A B \cap C \ne \emptyset$. Therefore, there exists $a \in A$ such that $a^{-1} \sigma \in A^2$, which implies that $\sigma \in A^3$.
\end{proof}

\subsection{Sharpness example}

Our sharpness example consists of the set of permutations with about $n^{1/3}$ fixed points and a single $(n-n^{1/3})$-cycle.

\begin{proposition}\label{Prop:not a mixer}
    There exists an absolute constant $C>0,$ such that $A_n$ is not normally an $n^{-Cn^{1/3}}$-mixer.
\end{proposition}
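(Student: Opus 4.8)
The plan is to exhibit a single normal set $A\subseteq A_n$ of density roughly $n^{-Cn^{1/3}}$ for which the product-mixing estimate fails when we take $A=B=C$. Following the hint in the excerpt, I would take $m=\lfloor 10n^{1/3}\rfloor$ (adjusting parity so the conjugacy class lies in $A_n$) and let $A$ be the conjugacy class of all permutations with exactly $m$ fixed points and a single $(n-m)$-cycle. First I would record the density: $\mu(A)=\frac{|A|}{|A_n|}=\frac{2}{m!\,(n-m)}$, so $\mu(A)=n^{-\Theta(n^{1/3})}$, and in particular $\mu(A)\ge n^{-Cn^{1/3}}$ for a suitable absolute constant $C$. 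The goal is then to show that the quantity
\[
\Pr_{a\sim A,\ b\sim A}[ab\in A]
\]
is \emph{not} within a factor $1.01$ of $\mu(A)$; concretely I would show it is much larger than $\mu(A)$, so that $A_n$ fails to be a normal $n^{-Cn^{1/3}}$-mixer.

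The key step is a lower bound on $\Pr_{a,b\sim A}[ab\in A]$. The natural approach is to produce many pairs $(a,b)\in A\times A$ with $ab\in A$ by a direct combinatorial construction. For instance, fix a set $S$ of $m$ points and a cyclic arrangement of $[n]\setminus S$; one can choose $a$ to move points along the big cycle in a controlled way and $b$ to ``correct'' it so that $ab$ again has $m$ fixed points and one $(n-m)$-cycle — the point being that composing two permutations of this cycle type can easily land back in the same cycle type, and there is a lot of freedom in doing so. Alternatively, and perhaps more cleanly, I would compare $\Pr_{a,b\sim A}[ab\in A]$ with the probability that $ab$ has \emph{at least} $m$ common fixed points with a suitable reference, which is controlled by the probability that two random $m$-subsets of $[n]$ intersect in a prescribed pattern. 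The crude count to keep in mind (from the remark in the excerpt) is that the probability a random permutation has $\ge k$ fixed points is about $1/k!$, so requiring $ab$ to have about $m$ fixed points costs roughly $\frac{1}{m!}=n^{-\Theta(n^{1/3})}$, whereas requiring $a,b$ individually to each have a prescribed $m$-set of fixed points costs about $\binom{n}{m}^{-2}=n^{-\Theta(n^{1/3}\log n)}$ — a genuinely smaller quantity. Exploiting the overlap between the fixed-point sets of $a$ and $b$, one gains back a factor that makes $\Pr_{a,b\sim A}[ab\in A]$ polynomially (in the exponent) larger than $\mu(A)=\frac{2}{m!(n-m)}$, contradicting the $1.01$-factor requirement.

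Concretely I would proceed as follows. (i) Compute $\mu(A)$ exactly and verify $\mu(A)\ge n^{-Cn^{1/3}}$. (ii) Construct an explicit family of ``good'' pairs: pick a fixed $(n-m)$-cycle $\tau$ and let $a=\tau$ and $b$ range over an appropriate set of permutations such that $b$ has $m$ fixed points, one $(n-m)$-cycle, and $\tau b$ again has the required cycle type; count this family from below. (iii) Divide by $|A|^2$ to get a lower bound on $\Pr_{a,b\sim A}[ab\in A]$, and compare with $\mu(A)$. The comparison should reveal a ratio that is at least $n^{\varepsilon n^{1/3}}$ (or at the very least bounded away from $1$ by more than $0.01$) for a suitable constant $C$, which is all that is needed.

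The main obstacle is step (ii)–(iii): getting a \emph{clean} lower bound on the number of good pairs $(a,b)$ with $ab\in A$, since naively composing two permutations of a fixed cycle type gives an object whose cycle type is hard to control. The trick will be to engineer the construction so that the product's cycle type is forced — for example by taking one factor to act only on the fixed-point set of the other, or by taking $b$ of the form (big cycle on $[n]\setminus S$) times identity on $S$ and choosing $a$ so that the supports interact in a transparent way (e.g. $a=b^{-1}c$ with $c\in A$, which tautologically gives $ab=c\in A$ but requires checking that $a\in A$ for many choices of $c$ sharing enough fixed points with $b$). Counting the $c\in A$ that share at least, say, $m/2$ fixed points with a fixed $b\in A$ and for which $b^{-1}c$ also lies in $A$ is the heart of the matter; I expect this to reduce to a binomial-coefficient estimate comparing $\binom{n}{m}$, $\binom{m}{m/2}$, and $\binom{n-m/2}{m/2}$, together with the count of $(n-m)$-cycles, and to come out in our favor by the $\log n$ gap in the exponents noted above.
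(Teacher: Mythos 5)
Your choice of witness set $A$ (the class with about $10n^{1/3}$ fixed points and one long cycle) and of target inequality ($\Pr_{a,b\sim A}[ab\in A]>1.01\,\mu(A)$) match the paper, but the heart of your argument --- a direct combinatorial lower bound on the number of pairs $(a,b)\in A\times A$ with $ab\in A$ --- is only a heuristic, and the constructions you sketch do not deliver it. The natural family of ``good'' pairs you describe, where $a$ and $b$ share their fixed-point set $S$ so that $ab$ fixes $S$ and acts on the complement as a product of two $(n-m)$-cycles, can be counted exactly: it contributes roughly $\binom{n}{m}\cdot((n-m-1)!)^2\cdot\frac{2}{n-m+1}$ pairs, hence a probability of order $\frac{2}{n\binom{n}{m}}$, whereas $\mu(A)=\frac{2}{m!\,(n-m)}$; the ratio is about $\frac{(m!)^2 (n-m)!}{n!}\approx (m^2/en)^m\to 0$, i.e.\ this family is \emph{exponentially too small}, not $1.01$ times larger than $\mu(A)$. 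The underlying reason is that two random elements of $A$ typically share \emph{no} fixed points ($m^2/n\to 0$), so no overlap-of-fixed-point-sets argument can produce the required excess. Also, your remark that $a=b^{-1}c$ ``tautologically gives $ab=c$'' is off ($ab=b^{-1}cb$, which lies in $A$ only because $A$ is normal), and the resulting reduction is circular: counting $c\in A$ with $b^{-1}c\in A$ is exactly the original problem.

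The true excess is a third-moment effect that naive counting does not see: with $f=1_A/\mu(A)$ one has $\frac{\Pr[ab\in A]}{\mu(A)}-1=\sum_{\chi\neq 1}\frac{\hat f(\chi)^3}{\chi(1)}$, where $\hat f(\chi)=\chi(\sigma_0)$ for $\sigma_0\in A$, and already the single term for $\chi_{(n-1,1)}$ (number of fixed points minus one) contributes $\frac{(10n^{1/3})^3}{n-1}\approx 1000$. The paper's proof is exactly this computation together with two inputs for which your sketch has no substitute: (i) by the Murnaghan--Nakayama rule every term of level at most $n^{1/3}$ is \emph{nonnegative} (the rim hook of length $n-t$ must be removed from the first row), so the low-level terms cannot cancel the $1000$; and (ii) the globalness and dimension bounds (Proposition \ref{prop:characters are global}, Lemma \ref{lem: dimension of characters}, Theorem \ref{thm:EFP dimensions of representations}) show that the total contribution of levels above $n^{1/3}$ is $O(1)$. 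For contrast, the second moment gives $\mathbb{E}_{a,b\sim A}[\mathrm{fix}(ab)]=1+\frac{(t-1)^2}{n-1}=1+O(n^{-1/3})$, essentially the same as for a uniform permutation, so the event $ab\in A$ is a genuine large-deviation event whose probability boost over $\mu(A)$ lives in the cubic character sum; I do not see how the binomial-coefficient comparisons you propose would recover it.
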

\begin{proof} We may assume that $n$ is sufficiently large by increasing $C$ if necessary. Set  $A$ to be the set of permutations with $t = \lceil 10n^{1/3}+1\rceil$ fixed points and a single $(n-t)$-cycle. We will prove that 
\[
\frac{\Pr_{a,b\sim A}[ab\in A] -\mu(A)}{\mu(A)} >1.01.
\]

Let $f=g=h=\frac{1_A}{\mu(A)}$. We have \[\langle f*g,h\rangle -1 = \sum_{\chi\ne 1}\frac{\widehat{f}(\chi)^3}{\chi(1)}.\]

Consider the partition $(n-1,1)$. Since $\chi_{n-1,1}(\sigma)$ counts the fixed points of $\sigma$ minus 1, the term with the character $\chi_{n-1,1}$ contributes at least 1000 to the sum. Therefore, in order for the sum to be $o(1)$, the other terms must cancel it out. However, by the Murnaghan--Nakayama rule all the terms corresponding to characters of level $\le n^{1/3}$ are positive. Indeed, when removing a rim hook of length $n-t$ from a Young diagram of level at most $n^{1/3}$, the rim hook must be contained in the first row. We now upper bound the sum of the absolute values of terms of level $>n^{1/3}.$ For levels  $d\ge n^{5/12}$ we may combine the following facts to deduce that the sum over all characters of level $>n^{5/12}$ of $\frac{|\hat{f}(\chi)^3|}{\chi(1)}$ is $\le 1.$  Firstly, we have $|\hat{f}(\chi)|\le \|f\|_2\le n^{Cn^{1/3}}$ for some absolute constant $C>0$. Secondly, the number of irreducible characters of level $d$ is the number of partitions of $d$, which is $e^{O(\sqrt{d})}$ and overall the number of characters is $e^{O(\sqrt{n})}$. Finally, we have $\chi(1)\ge  \left(\frac{n}{ed}\right)^d$ when $d \le n/200$ and at least $e^{\Omega(n)}$ for $d>n/200$ by Theorem \ref{thm:EFP dimensions of representations}. 
To upper bound $|\hat{f}(\chi)|$ for the remaining $\chi$, we fix a $t$-umvirate $U_{I}$. Identifying $U_I$ with $S_{n-|I|}$ we let $\sigma$ be an $(n-|I|)$-cycle in $U_I$. We then have $\hat{f}(\chi) = \chi_{I\to I}(\sigma).$ We may now use Theorem \ref{prop:characters are global} to upper bound \[\frac{1}{n}\chi_{I\to I}(\sigma)^2 \le \|\chi_{I\to I}\|_2^2 \le 2^d \tilde{\chi}(1),\] where $\tilde{\chi}$ is obtained from $\chi$ by deleting its first row and using the fact that the density of the $(n-|I|)$-cycles is $\ge \frac{1}{n}$. We also have $\chi(1)\ge \binom{n-d}{d}\tilde{\chi}(1)$ by Lemma \ref{lem: dimension of characters}.
This shows that we have \[\frac{|\hat{f}(\chi)|^3}{\chi(1)} \le \frac{2^{3d/2}n^{3/2}\tilde{\chi}(1)^2}{\binom{n-d}{d}}.\]
We may now upper bound $\tilde{\chi}(1)\le \sqrt{d!}$ to obtain that
\[
\frac{|\hat{f}(\chi)^3|}{\chi(1)}\le 2^{-d}.
\]
Summing over all $\chi$ of levels in the interval $(n^{1/3}, n^{5/12})$ completes the proof as there are only $e^{O(\sqrt{d})}$ characters of level $d$ for each $d$.
\end{proof}

\bibliographystyle{plain}
\bibliography{refs}

\end{document}